\title{The Wonderland of Reflections}
\date{\today}
\author%[L.~Barto]
{Libor Barto}
	\address{Department of Algebra, MFF UK, Sokolovska 83, 186 00 Praha 8, Czech Republic}
	\email{libor.barto@gmail.com}
	\urladdr{http://www.karlin.mff.cuni.cz/~barto/}
\author%[J.~Opr\v{s}al]
{Jakub Opr\v{s}al}
	\address{Department of Algebra, MFF UK, Sokolovska 83, 186 00 Praha 8, Czech Republic}
	\email{oprsal@karlin.mff.cuni.cz}
	\urladdr{http://www.karlin.mff.cuni.cz/~oprsal/}
\author%[M.~Pinsker]
{Michael Pinsker}
	\address{Department of Algebra, MFF UK, Sokolovska 83, 186 00 Praha 8, Czech Republic}    
	\email{marula@gmx.at}
    \urladdr{http://dmg.tuwien.ac.at/pinsker/}
\thanks{Libor Barto and Jakub Opr\v sal were supported by the Grant Agency of the Czech Republic, grant GA\v CR
13-01832S. Michael Pinsker has been funded through project  P27600 of the  Austrian Science Fund (FWF)}
\theoremstyle{plain}
    \newtheorem{thm}{Theorem}[section]
    \newtheorem{theorem}[thm]{Theorem}
    \newtheorem{lemma}[thm]{Lemma}
    \newtheorem{prop}[thm]{Proposition}
    \newtheorem{proposition}[thm]{Proposition}
    \newtheorem{cor}[thm]{Corollary}
    \newtheorem{corollary}[thm]{Corollary}
    \newtheorem{prob}[thm]{Problem}
    \newtheorem{conj}[thm]{Conjecture}
		\newtheorem{conjs}[thm]{Conjectures}
\theoremstyle{definition}
    \newtheorem{defn}[thm]{Definition}
    \newtheorem{definition}[thm]{Definition}
    \newtheorem{example}[thm]{Example}
\theoremstyle{remark}
\newcommand{\transfer}{reflection}
\newcommand{\transfers}{reflections}
\newcommand{\alg}[1]{\mathbf{#1}}
\newcommand{\clone}[1]{\mathscr{#1}}
\newcommand{\relstr}[1]{\mathbb{#1}}
\newcommand{\class}[1]{\mathcal{#1}}
\newcommand{\var}[1]{\mathcal{#1}}
\newcommand{\varclo}[1]{\mathrm{clo}(#1)}
\newcommand{\algA}{\alg{A}}
\newcommand{\cloA}{\clone{A}}
\newcommand{\cloB}{\clone{B}}
\newcommand{\cloC}{\clone{C}}
\newcommand{\relA}{\relstr{A}}
\newcommand{\relB}{\relstr{B}}
\DeclareMathOperator{\PpInt}{\mathsf Pp-int}
\DeclareMathOperator{\PpPower}{\mathsf Ppp}
\DeclareMathOperator{\HomEq}{\mathsf He}
\DeclareMathOperator{\EHSP}{\mathsf{EHSP}}
\DeclareMathOperator{\ETraP}{\mathsf{ERP}}
\DeclareMathOperator{\ETra}{\mathsf{ER}}
\DeclareMathOperator{\RPfin}{\mathsf{RP}_{fin}}
\DeclareMathOperator{\RP}{\mathsf{RP}}
\DeclareMathOperator{\RR}{\mathsf{RR}}
\DeclareMathOperator{\SP}{\mathsf{SP}}
\DeclareMathOperator{\PR}{\mathsf{PR}}
\DeclareMathOperator{\EREPfin}{\mathsf{EREP}_{fin}}
\DeclareMathOperator{\EPfin}{\mathsf{EP}_{fin}}
\DeclareMathOperator{\ETraPfin}{\mathsf{ERP}_{fin}}
\DeclareMathOperator{\ERetP}{\mathsf{ER}_{ret}\mathsf P}
\DeclareMathOperator{\EHSPfin}{\mathsf{EHSP}_{fin}}
\DeclareMathOperator{\HSP}{\mathsf{HSP}}
\DeclareMathOperator{\HSPfin}{\mathsf{HSP}_{fin}}
\DeclareMathOperator{\SSS}{\mathsf S}
\DeclareMathOperator{\EEE}{\mathsf E}
\DeclareMathOperator{\PPP}{\mathsf P}
\DeclareMathOperator{\PPPfin}{\mathsf P_{fin}}
\DeclareMathOperator{\HHH}{\mathsf H}
\DeclareMathOperator{\Tra}{\mathsf R}
\DeclareMathOperator{\Ret}{\mathsf R_{ret}}
\DeclareMathOperator{\Exp}{\mathsf E}
\DeclareMathOperator{\Pfin}{\mathsf P_{fin}}
\DeclareMathOperator{\Aut}{Aut}
\DeclareMathOperator{\End}{End}
\DeclareMathOperator{\Pol}{Pol}
\DeclareMathOperator{\CSP}{CSP}
\newcommand{\ignore}[1]{}
\newcommand{\To}{\rightarrow}
\begin{document}

\begin{abstract}
A fundamental fact for the algebraic theory of constraint satisfaction problems (CSPs) over a fixed template is that pp-interpretations between at most countable $\omega$-categorical relational structures have two algebraic counterparts for their polymorphism clones: a semantic one via the standard algebraic operators $\HHH$, $\SSS$, $\PPP$, and a syntactic one via clone homomorphisms (capturing identities). 
We provide a similar characterization which incorporates \emph{all} relational constructions relevant for CSPs, that is, homomorphic equivalence and adding singletons to cores in addition to pp-interpretations. For the semantic part we introduce a new construction, called \transfer, and for the syntactic part we find an appropriate weakening of clone homomorphisms, called h1 clone homomorphisms (capturing identities of height $1$). 

As a consequence, the complexity of the CSP of an at most countable $\omega$-categorical structure depends only on the identities of height $1$ satisfied in its polymorphism clone as well as the natural uniformity thereon. This allows us in turn to formulate a new elegant dichotomy conjecture for the CSPs of reducts of finitely bounded homogeneous structures.

Finally, we reveal a close connection between h1 clone homomorphisms and the notion of compatibility with projections used in the study of the lattice of interpretability types of varieties. 
%
%We introduce a new weaker algebraic abstraction of function clones than the one classically used in universal algebra; this corresponds to a new weaker notion of homomorphism between function clones. Via a variant of Birkhoff's $\HSP$ theorem for this new notion, we characterize when a function clone can be obtained from another one via arbitrary powers and so-called \transfers. Taking a topological perspective in addition to the algebraic one, we obtain a similar characterization for finite powers and \transfers.
%
%We show that these weaker notions of homomorphism cover all known CSP reductions. In particular, whether a given CSP of a finite structure reduces to the CSP of an $\omega$-categorical structure only depends on the topology of the polymorphism clone of the latter as well as the equations of height 1  satisfied therein. This allows us to formulate a new dichotomy conjecture for the CSPs of reducts of finitely bounded homogeneous structures.
%
%We introduce a construction on algebras, called a \transfer, which captures (up to pp-definability) homomorphic equivalence of the corresponding relational structures. 
\end{abstract}
\maketitle

\section{Introduction and Main Results}

The motivation for this work is to resolve some unsatisfactory aspects in the fundamentals of the theory of fixed-template constraint satisfaction problems (CSPs). The CSP over a relational structure $\relA$ in a finite language, denoted $\CSP(\relA)$, is the decision problem which asks whether a given primitive positive (pp-) sentence over $\relA$ is true. The focus of the theoretical research on such problems is to understand how the complexity of $\CSP(\relA)$, be it computational or descriptive, depends on the structure $\relA$.

We start by briefly reviewing and discussing the basics of the theory, first for
structures $\relA$ with a finite universe, and then for those with an infinite
one. The pioneering papers for finite structures $\relA$ are \cite{FV98} and
\cite{JBK}, and our presentation is close to the recent
survey~\cite{BSL:9956673}. For infinite structures $\relA$, a detailed account
of the current state of the theory can be found in~\cite{Bodirsky-HDR}, and a
compact introduction in~\cite{Pin15}. For the sake of compactness, we will
define standard notions only after this introduction, in
Section~\ref{sect:prelims}.

\subsection{The finite case} For finite relational structures $\relA$ and $\relB$ there 
are three general reductions which are used to compare the complexity of their CSPs. 
Namely, we know that  $\CSP(\relstr{B})$ is at most as hard as $\CSP(\relstr{A})$ if
\begin{itemize}
\item[(a)] $\relB$ is pp-interpretable in $\relA$, or
\item[(b)] $\relB$ is homomorphically equivalent to $\relA$, or
\item[(c)] $\relA$ is a core and $\relB$ is obtained from $\relA$ by adding a singleton unary relation.
\end{itemize} 
Item (a) has two algebraic counterparts. The semantic one, item (ii) in Theorem~\ref{thm:old_finite} below, follows from the well-known Galois correspondence between relational clones and function clones (see for example~\cite{Szendrei}), while the syntactic one, item (iii) in the same theorem, follows from the Birkhoff's HSP theorem~\cite{Bir-On-the-structure}.

\begin{theorem} \label{thm:old_finite}
Let $\relA$, $\relB$ be finite relational structures and $\cloA$, $\cloB$ their polymorphism clones.
Then the following are equivalent.
\begin{itemize}
\item[(i)] $\relB$ is pp-interpretable in $\relA$.
\item[(ii)] $\cloB \in \EHSPfin \cloA $, or equivalently, $\cloB \in \EHSP \cloA$;  here, $\EEE$ denotes the expansion operator.
\item[(iii)] There exists a clone homomorphism from $\cloA$ into $\cloB$, i.e., a  mapping $\cloA \to \cloB$ preserving identities. 
\end{itemize}
\end{theorem}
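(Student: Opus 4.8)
The plan is to prove $(i)\Leftrightarrow(ii)$ via the Galois correspondence between polymorphism clones and pp-definability, and $(ii)\Leftrightarrow(iii)$ via Birkhoff's $\HHH\SSS\PPP$ theorem, fixing once and for all an algebra $\algA$ with $\Clo(\algA)=\cloA$.

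For $(i)\Rightarrow(ii)$, suppose $\relB$ is pp-interpretable in $\relA$ by a $d$-dimensional interpretation with domain $D\subseteq A^d$ and coordinate map $h\colon D\to B$. Since the domain formula and the equality formula of the interpretation are pp over $\relA$, the set $D$ is a subuniverse of $\algA^d$ and the kernel $\theta$ of $h$ is a congruence of $\algA^d\rest_D$; hence the algebra $\alg{C}$ carried by $D/\theta\cong B$ lies in $\HHH\SSS\PPPfin(\algA)$. Because every relation of $\relB$ pulls back along $h$ to a pp-definable, hence $\cloA$-invariant, relation on $A^d$, each basic operation of $\alg{C}$ is a polymorphism of $\relB$, so $\cloB$ is an expansion of $\Clo(\alg{C})$, i.e.\ $\cloB\in\EHSPfin\cloA$. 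For $(ii)\Rightarrow(i)$ one reverses this: write $\cloB$ as an expansion of $\Clo(\alg{C})$ with $\alg{C}=(\algA^d\rest_D)/\theta$; since $A$ is finite, the $\Pol$--$\Inv$ correspondence shows that $D$ and $\theta$, being $\cloA$-invariant, are pp-definable over $\relA$, and likewise each preimage under $h$ of a relation of $\relB$ is pp-definable; these data constitute a pp-interpretation of $\relB$ in $\relA$. The equivalence of $\EHSPfin$ and $\EHSP$ is automatic here because $B$ is finite, so only finite powers ever occur.

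For $(iii)\Rightarrow(ii)$, a clone homomorphism $\xi\colon\cloA\to\cloB$ makes $B$, equipped with the operations $\xi(f)$ for $f\in\cloA$, an algebra in the signature of $\algA$ satisfying every identity valid in $\algA$; by Birkhoff this algebra lies in $\HHH\SSS\PPP(\algA)$, and in $\HHH\SSS\PPPfin(\algA)$ since $B$ is finite. Its clone is exactly $\xi(\cloA)$, so $\xi(\cloA)\in\HHH\SSS\PPPfin\cloA$ and $\cloB$, being an expansion of it, lies in $\EHSPfin\cloA$. For $(ii)\Rightarrow(iii)$, each operator yields a clone homomorphism in the evident direction --- the coordinatewise (diagonal) map for $\PPPfin$, restriction to a subuniverse for $\SSS$, passage to a quotient for $\HHH$, and the inclusion of a subclone for $\EEE$ --- and composing them produces a clone homomorphism $\cloA\to\cloB$.

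The step that needs the most care is the bookkeeping around the expansion operator $\EEE$: a pp-interpretation constrains $\relB$ only through pp-definability of its relations, hence determines $\cloB$ merely up to adding further polymorphisms, which is precisely why $\EEE$ appears on the semantic side and why $(iii)$ asserts the mere \emph{existence} of a clone homomorphism rather than an isomorphism. A second point worth stressing --- and the reason the infinite case treated later is delicate --- is that $(ii)\Rightarrow(i)$ relies on the finiteness of $A$ through the $\Pol$--$\Inv$ Galois correspondence, which for infinite structures has to be replaced by $\omega$-categoricity together with topological closedness of the clones involved.
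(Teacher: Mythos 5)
Your proposal is correct and follows essentially the same route the paper indicates: the equivalence (i)$\Leftrightarrow$(ii) via the finite $\Pol$--$\Inv$ Galois correspondence (cf.\ Proposition~\ref{prop:algebraic}(iii)), and (ii)$\Leftrightarrow$(iii) via Birkhoff's HSP theorem, which the paper carries out concretely with the free algebra on $|B|$ generators inside $A^{A^B}$ in the proof of Proposition~\ref{prop:abstract}(i) --- matching your observation that finiteness of $B$ keeps the powers finite. The details you supply (invariance of $D$, $\theta$, and the pulled-back relations; the chain of canonical clone homomorphisms for $\PPPfin$, $\SSS$, $\HHH$, $\EEE$) are all sound.
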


One unsatisfactory feature of this theorem is that it does not cover the other
two reductions~(b) and~({c}), in particular the easiest reduction to
homomorphically equivalent structures. The way around this fact is, usually, to
assume that those reductions have already been applied. This is the same as
saying that we can ``without loss of generality'' assume that structures are
cores containing all unary singleton relations, or equivalently that their
polymorphism clones are idempotent, and then only use  reductions by
pp-interpretations. However, this causes slightly awkward formulations, e.g., of
the conjectured condition for polynomial solvability~\cite{JBK} (also see
Conjecture~\ref{conj:old_finite} below) or of the condition for expressibility
in Datalog~\cite{LZ07,BK14}. Even worse, it results in a loss of power:
Example~\ref{ex:hepp} shows that there are cores $\relA$ and $\relB$ such that
$\relB$ is not pp-interpretable in $\relA$, but $\relB$ is homomorphically
equivalent to a structure which is pp-interpretable in $\relA$.  These
considerations bring up the following question: is there a variant of
Theorem~\ref{thm:old_finite} that covers all three reductions~(a), (b), and
({c})?

Another question concerns item (iii), which implies that the complexity of $\CSP(\relA)$ depends only on identities satisfied by operations in the polymorphism clone $\cloA$. However, the polymorphism clones of homomorphically equivalent structures need not necessarily satisfy the same identities,  with the exception of height $1$ identities. 
%Moreover, most of the classical conditions Maltsev conditions are linear. 
Naturally, the question arises: Is it possible to prove that the complexity of the CSP of a structure depends only on the height $1$ identities that hold in its polymorphism clone?

Finally, and related to the preceding question, the finite tractability conjecture~\cite{JBK} states that, assuming P${}\neq{}$NP, the CSP of a finite core $\relB$ is NP-hard if and only if  the idempotent reduct of its polymorphism clone $\cloB$ does not satisfy any non-trivial identities; here, we say that identities are non-trivial if they are not satisfiable in the clone of projections on a two-element set, which we denote by $\mathbf 1$. For general finite structures, the conjecture can be formulated as follows.  

\begin{conj} \label{conj:old_finite}
Let $\relA$ be a finite relational structure and let $\relB$ be its idempotent core, i.e., its core expanded by all singleton unary relations.
Then one of the following holds.
\begin{itemize}
\item The polymorphism clone $\cloB$ of $\relB$ maps homomorphically to $\mathbf{1}$ (and consequently $\CSP(\relA)$ is NP-complete).
\item $\CSP(\relA)$ is solvable in polynomial-time.
\end{itemize}
\end{conj}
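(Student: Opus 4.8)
This is the Feder--Vardi dichotomy conjecture in the algebraic form due to Bulatov, Jeavons and Krokhin, so what follows is a strategy rather than a short argument; one implication, however, is routine and already known. Note first that $\CSP(\relA)$ and $\CSP(\relB)$ have the same complexity, via reductions~(b) and~(c) of the introduction and their standard converses for cores, and that $\CSP(\relB)$ lies in NP. Suppose $\cloB$ admits a clone homomorphism onto $\mathbf 1$. Since $\relB$ is idempotent, the image of any clone homomorphism out of $\cloB$ is already all of $\mathbf 1$, so $\mathbf 1 \in \HHH\cloB \subseteq \EHSP\cloB$; by Theorem~\ref{thm:old_finite} this means $\relB$ pp-interprets the structure whose polymorphism clone is $\mathbf 1$ --- positive $1$-in-$3$-SAT, say --- which is NP-complete, so $\CSP(\relB)$, and hence $\CSP(\relA)$, is NP-complete. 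Together with Taylor's theorem, refined by Mar\'oti--McKenzie and Barto--Kozik (a finite idempotent algebra either maps onto $\mathbf 1$ or has a cyclic term of every prime arity), the entire content of the conjecture collapses to the single claim: \emph{if $\alg{B}$ has a cyclic term of every prime arity, then $\CSP(\relB) \in \mathrm P$}.

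For this tractability claim the plan is to design a recursive algorithm on pp-instances over $\relB$ whose steps are governed by the structure of the algebras sitting on the constraints. Given an instance one would first enforce a strong local consistency condition (say $(2,3)$-consistency, or $(k,k+1)$-consistency for an appropriate $k$), rejecting if it fails; then, if none of the surviving constraint algebras displays affine (``type $\mathbf 2$'') behaviour, declare the instance solvable --- this is the bounded width theorem of Barto and Kozik; otherwise isolate an affine part, remove it by Gaussian elimination over the relevant modules, and recurse on a reduced instance; and when neither case applies cleanly, split the domains along a proper congruence or an absorbing subuniverse and branch, recursing on strictly smaller subinstances. Correctness and termination would rest on a structural analysis --- via tame congruence theory and the absorption toolkit (absorbing and central subuniverses, the behaviour of cyclic operations on subdirect powers) --- of the subdirectly irreducible algebras in the variety generated by $\alg{B}$, controlling how the affine and semilattice-like parts can be nested inside one another.

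The hard part is precisely this tractability direction, and within it the interaction between linear (``few subpowers'') and bounded width (``semilattice'') behaviour: neither local consistency nor linear algebra alone settles the matter, and one has to prove that the alternation between them decreases a suitable measure at each recursive call while keeping the branching under control. That is the step with no short proof --- it is where the machinery built around cyclic terms and absorption must be used in full force --- and the partial results known in the literature (bounded width, few subpowers, smooth digraphs, conservative templates, and so on) only chip away at it. The present paper does not attempt this: the role of \transfers{} and of h1 clone homomorphisms here is rather to let us restate the dichotomy directly for $\relA$, showing in particular that NP-hardness already follows from the absence of an h1 clone homomorphism from $\cloA$ to $\mathbf 1$, so that no passage to the idempotent core is needed.
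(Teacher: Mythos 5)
The statement you were asked to prove is Conjecture~\ref{conj:old_finite} of the paper, i.e.\ the Bulatov--Jeavons--Krokhin tractability conjecture, and the paper contains no proof of it: it is stated as an open conjecture, and the paper's contribution is only to reformulate it in a core-free way (Conjecture~\ref{conj:new_finite}, via Theorem~\ref{thm:equiv_conditions}). So there is no proof in the paper to compare yours against, and you correctly recognize this. The hardness half of your write-up is sound in substance: a clone homomorphism $\cloB\to\mathbf 1$ yields, by Theorem~\ref{thm:old_finite} ((iii)$\Rightarrow$(i)), a pp-interpretation in $\relB$ of a structure with polymorphism clone $\mathbf 1$ such as positive 1-in-3-SAT, whence NP-hardness of $\CSP(\relB)$ and hence of $\CSP(\relA)$. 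One intermediate claim is not justified as stated: idempotence does not by itself force $\mathbf 1\in\HHH\cloB$; the standard fact is that a finite idempotent algebra with no Taylor term has a two-element factor of a \emph{subalgebra} all of whose operations are projections, so one lands in $\HHH\SSS\cloB$ rather than $\HHH\cloB$. This does not affect your argument, since you only use membership in $\EHSP\cloB$, which follows directly from Theorem~\ref{thm:old_finite}.

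The tractability half, however, is not proved by your proposal, and you should not present the text as a proof environment. What you give there is an accurate reduction of the conjecture to the single claim ``cyclic term implies polynomial-time solvability'' (via Taylor, Mar\'oti--McKenzie, Barto--Kozik --- exactly the equivalences the paper collects in Theorem~\ref{thm:equiv_conditions}), followed by a survey of the known machinery (local consistency and bounded width, few subpowers, absorption, congruence/subuniverse decompositions) and a plausible shape for a recursive algorithm. The decisive step --- proving that the alternation between consistency enforcement and linear-algebraic reduction is correct and terminates for \emph{every} finite idempotent template with a cyclic term --- is explicitly left open in your sketch, and that step is precisely the content of the conjecture. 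Within the logic of this paper the honest conclusion is that the statement remains conjectural; your framing of where the difficulty lies matches the paper's own framing.
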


\noindent
Is it possible to find a criterion on the structure of the polymorphism clone $\cloA$ of $\relA$, rather than $\cloB$, which divides NP-hard from polynomial-time solvable CSPs for all finite structures $\relA$, without the necessity to consider their cores? 

It turns out that Theorem~\ref{thm:old_finite} can be generalized to answer all three questions in positive. First, we observe that $\relB$ can obtained from $\relA$ by using any number of the constructions (a),~(b),~({c}) if and only if $\relB$ is homomorphically equivalent to a pp-power of $\relA$, where pp-power is a simplified version of pp-interpretation which we are going to define. We then introduce a simple algebraic construction, the \transfer{}%
\footnote{Also known as the \emph{double shrink}.}, which is in a sense an algebraic counterpart to homomorphic equivalence. This gives us a suitable generalization of item (ii) in Theorem~\ref{thm:old_finite}. Finally, we provide an analogue of Birkhoff's HSP theorem for classes of algebras described by height $1$ identities, by which we obtain syntactic characterization corresponding to item~(iii). Altogether, we get the following.

\begin{theorem} \label{thm:new_finite}
Let $\relA$, $\relB$ be finite relational structures and $\cloA$, $\cloB$ their polymorphism clones.
Then the following are equivalent.
\begin{itemize}
\item[(i)] $\relB$ is homomorphically equivalent to a pp-power of $\relA$, or equivalently, $\relB$ can be obtained from $\relA$ by a finite number of constructions among (a),~(b),~({c}).
\item[(ii)] $\cloB \in \ETraPfin \cloA$, or equivalently, $\cloB \in \ETraP \cloA$; here, $\Tra$ denotes the new operator of taking \transfers{}.
\item[(iii)] There exists an h1 clone homomorphism from $\cloA$ into $\cloB$, i.e., a  mapping $\cloA \to \cloB$ preserving identities of height $1$. 
\end{itemize}
\end{theorem}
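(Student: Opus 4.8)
\emph{Plan.} We prove the cycle (i)\,$\Rightarrow$\,(ii)\,$\Rightarrow$\,(iii)\,$\Rightarrow$\,(i) and fold in the ``or equivalently'' clauses along the way. The implication (i)\,$\Rightarrow$\,(ii) is a translation between the relational and the algebraic side. If $\relB'$ is a pp-power of $\relA$ of dimension $n$, then every relation of $\relB'$ is, read off on $A$, a subalgebra of a power of $\relA$, so $\Pol(\relB')$ contains the $n$-th power of $\cloA$ and hence lies in $\EEE\PPPfin\cloA$. If $\relB$ is homomorphically equivalent to $\relB'$ via homomorphisms $h\colon\relB'\to\relB$ and $g\colon\relB\to\relB'$, then $\Pol(\relB)$ contains the \transfer{} of $\Pol(\relB')$ with respect to $(h,g)$, so $\cloB\in\EEE\Tra\Pol(\relB')$. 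Combining the two and simplifying the operator word $\EEE\,\Tra\,\EEE\,\PPPfin$ to $\ETraPfin$ via the elementary inclusions $\Tra\EEE\subseteq\EEE\Tra$ and $\EEE\EEE\subseteq\EEE$ gives (i)\,$\Rightarrow$\,(ii), and the same bookkeeping (plus the fact that for finite $\relB$ finite powers suffice) identifies $\ETraPfin\cloA$ with $\ETraP\cloA$ among finite clones. The equivalence, inside~(i), between obtainability by a finite sequence of (a),~(b),~(c) and homomorphic equivalence to a pp-power is a separate normalisation: one reorders any such sequence so that all pp-interpretations are performed first and amalgamates them into a single pp-power, which is then followed by a single homomorphic equivalence.

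\emph{(ii)\,$\Rightarrow$\,(iii).} Since $\id_\cloA$ is an h1 clone homomorphism and the composition of h1 clone homomorphisms is one, it suffices that each of the operators $\EEE$, $\Tra$, $\PPPfin$ preserves the existence of an h1 clone homomorphism out of $\cloA$. This holds: the inclusion of a clone into an expansion of it is an h1 clone homomorphism; the diagonal embedding of a clone into a finite power of it is even a clone homomorphism; and the map $f\mapsto h\circ f\circ(g,\dots,g)$ from a clone into its \transfer{} with respect to $(h,g)$ preserves height-$1$ identities, because such an identity, being of the form ``a single operation applied to variables equals a single operation applied to variables'', remains valid when all arguments are substituted through the fixed map $g$ and the fixed map $h$ is applied to both sides. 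Composing the relevant such maps along a witness for $\cloB\in\ETraPfin\cloA$ produces an h1 clone homomorphism $\cloA\to\cloB$.

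\emph{(iii)\,$\Rightarrow$\,(i).} This is the heart of the theorem --- an analogue of Birkhoff's variety theorem for classes of clones described by height-$1$ identities --- and the tool is a free structure. Let $A$, $B$ be the finite universes of $\relA$, $\relB$, and let $\xi\colon\cloA\to\cloB$ be an h1 clone homomorphism. Consider the pp-power $\mathbf F$ of $\relA$ of dimension $|A|^{|B|}$, on universe $A^{A^B}$ (functions $A^B\to A$); for $b\in B$ let $\pi_b\in A^{A^B}$ be the map $\gamma\mapsto\gamma(b)$, which, viewed as a $|B|$-ary operation on $A$, is the projection onto coordinate $b$ and so lies in $\cloA$. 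For each relation $S$ of $\relB$, of arity $r$, let $S^{\mathbf F}$ be the subalgebra of $(\relA^{A^B})^r$ generated by the tuples $(\pi_{b_1},\dots,\pi_{b_r})$ with $(b_1,\dots,b_r)\in S$. As $\relA$ is finite, subalgebras of its powers are precisely its pp-definable relations, so $\mathbf F$ is an honest pp-power of~$\relA$, and $b\mapsto\pi_b$ is a homomorphism $\relB\to\mathbf F$ by construction. For the converse direction we use $\xi$. Unwinding the generated subalgebra, every tuple of $S^{\mathbf F}$ has the form $\big(t(\pi_{b^{(1)}_i},\dots,\pi_{b^{(k)}_i})\big)_{i=1}^{r}$ for a single $t\in\cloA$ of some arity $k$ and tuples $(b^{(1)}_1,\dots,b^{(1)}_r),\dots,(b^{(k)}_1,\dots,b^{(k)}_r)\in S$; in particular every entry of such a tuple is an operation of the form $t(\pi_{c_1},\dots,\pi_{c_k})$ with $t\in\cloA$ and $c_1,\dots,c_k\in B$, and we define $\eta\big(t(\pi_{c_1},\dots,\pi_{c_k})\big):=\xi(t)(c_1,\dots,c_k)\in B$, extending $\eta$ arbitrarily to the remaining points of $A^{A^B}$. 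The only nontrivial point, and the \emph{precise} place where the hypothesis enters, is that $\eta$ is well defined: an identity $t(\pi_{c_1},\dots,\pi_{c_k})=t'(\pi_{c'_1},\dots,\pi_{c'_{k'}})$ between $|B|$-ary operations of $\cloA$ is a height-$1$ identity, hence holds in $\cloB$ with $\xi(t),\xi(t')$ in place of $t,t'$, and evaluating both sides at the tuple that lists every element of $B$ once returns $\xi(t)(c_1,\dots,c_k)=\xi(t')(c'_1,\dots,c'_{k'})$. Now $\eta$ maps a tuple of $S^{\mathbf F}$, of the form above, to $\xi(t)$ applied coordinatewise to the tuples $(b^{(j)}_1,\dots,b^{(j)}_r)\in S$, and this lies in $S$ because $\xi(t)\in\cloB=\Pol(\relB)$; hence $\eta\colon\mathbf F\to\relB$ is a homomorphism. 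Therefore $\relB$ is homomorphically equivalent to the pp-power $\mathbf F$ of $\relA$, which is~(i).

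\emph{Main obstacle.} The delicate step is the construction in (iii)\,$\Rightarrow$\,(i): arranging $\mathbf F$ to be simultaneously a genuine pp-power of $\relA$ --- which forces us to realise its relations as $\cloA$-invariant, hence pp-definable, subalgebras and to keep the full power $A^{A^B}$ rather than only the polymorphisms of $\relA$ sitting inside it --- a target of a homomorphism from $\relB$, and the source of a homomorphism back to $\relB$ whose well-definedness is \emph{exactly} the statement that $\xi$ respects height-$1$ identities. Once $\mathbf F$ is set up correctly, what remains --- the diagram chases just sketched, the operator-word manipulations and the reordering of constructions in (i)\,$\Leftrightarrow$\,(ii), and the checks in (ii)\,$\Rightarrow$\,(iii) --- is routine given the machinery already developed.
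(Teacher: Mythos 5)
Your main cycle is correct and, at its core, identical to the paper's argument: the decisive step (iii)\,$\Rightarrow$\,(i) is built on exactly the construction the paper uses, namely the free algebra on $|B|$ generators sitting inside $A^{A^B}$, the generators $\pi_b$, the lifted relations obtained by closing $\{(\pi_{b_1},\dots,\pi_{b_r}):(b_1,\dots,b_r)\in S\}$ under the componentwise action of $\cloA$, and the map $t(\pi_{c_1},\dots,\pi_{c_k})\mapsto\xi(t)(c_1,\dots,c_k)$ whose well-definedness is precisely the preservation of height-$1$ identities. The only organisational difference is that you go from (iii) directly to the relational statement (i) by exhibiting the pp-power $\mathbf F$ and both homomorphisms in one construction, whereas the paper factors this through the algebraic layer: it first proves $\cloB\in\ETraP\cloA$ from the h1 homomorphism (its ``linear Birkhoff'', Proposition~\ref{prop:abstract}, where the reflection of the action of $\cloA$ on the free algebra appears) and separately translates reflections of clones back into homomorphic equivalence of pp-definable structures (Proposition~\ref{prop:algebraic}). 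Your fusion of these two steps is legitimate and uses finiteness of $A$ in the same place the paper does (Geiger's theorem, to know that $\cloA$-invariant relations are pp-definable). The directions (i)\,$\Rightarrow$\,(ii) and (ii)\,$\Rightarrow$\,(iii) match the paper's Propositions~\ref{prop:algebraic} and~\ref{prop:abstract} as well.

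There is one genuine omission, in the ``or equivalently'' clause of item (i). Your normalisation of a sequence of constructions (a), (b), (c) into ``one pp-power followed by one homomorphic equivalence'' only accounts for (a) and (b); construction (c) --- adding a singleton unary relation $\{s\}$ to a core --- is not disposed of by reordering. The paper needs a separate argument (Lemma~\ref{lem:adding_const}): one first shows that the orbit $O$ of $s$ under $\Aut(\relA)$ is preserved by all polymorphisms (for a finite core, because $x\mapsto f(\alpha_1(x),\dots,\alpha_n(x))$ is an endomorphism, hence an automorphism) and therefore pp-definable, and then builds an explicit pp-power on $A^2$ whose extra relation is $\{(a,a):a\in O\}$, together with the homomorphisms $a\mapsto(a,s)$ and $(a,b)\mapsto\alpha_b(a)$. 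Without this, the implication ``obtainable by finitely many of (a), (b), (c)'' $\Rightarrow$ ``homomorphically equivalent to a pp-power'' is not established. Everything else in your write-up is sound.
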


This allows us to rephrase the conjectured sufficient condition for polynomial solvability. In the following theorem, items (i) -- (iv) are equivalent by~\cite{T77,BK12,Sig10,KMM14}, the primed items are new, core-free versions (see Section~\ref{sec:wrapup} for more details).

\begin{theorem} \label{thm:equiv_conditions}
Let $\relA$ be a finite relational structure, let $\relB$ be its idempotent core, and let $\cloA$, $\cloB$ be the polymorphism clones of $\relA$, $\relB$. Then the following are equivalent.
\begin{itemize}
\item[(i)] there is no clone homomorphism from $\cloB$ to $\mathbf{1}$.
\item[(ii)] there is no h1 clone homomorphism from $\cloB$ to $\mathbf{1}$.
\item[(ii')] there is no h1 clone homomorphism from $\cloA$ to $\mathbf{1}$.
\item[(iii)] $\cloB$ contains a cyclic operation, that is, an operation $t$ of arity $n \geq 2$ such that $t(x_1, \dots x_n) \approx t(x_2, \dots, x_n,x_1)$. 
\item[(iii')] $\cloA$ contains a cyclic operation. 
\item[(iv)] $\cloB$ contains a Siggers operation, that is, a $4$-ary operation $t$ such that $t(a,r,e,a) \approx t(r,a,r,e)$.
\item[(iv')] $\cloA$ contains a Siggers operation.
\end{itemize}
\end{theorem}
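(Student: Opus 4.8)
The equivalence of the primed items with the unprimed ones should follow by combining Theorem~\ref{thm:new_finite} with the observation that the idempotent core $\relB$ is obtained from $\relA$ by the constructions (b) and (c), hence by Theorem~\ref{thm:new_finite}(iii) there is an h1 clone homomorphism $\cloA \to \cloB$; and conversely $\relA$ is always homomorphically equivalent to its core, so that an h1 clone homomorphism $\cloB' \to \cloB$ for the core $\cloB'$ of $\relA$ goes both ways. First I would establish the chain (i) $\Leftrightarrow$ (ii) $\Leftrightarrow$ (iii) $\Leftrightarrow$ (iv) by citing \cite{T77,BK12,Sig10,KMM14}: the equivalence of (i) and (iii), (iv) is the cyclic/Siggers term theorem, while (ii) $\Leftrightarrow$ (i) follows because a clone homomorphism to $\mathbf 1$ restricts to an h1 clone homomorphism, and conversely any h1 clone homomorphism from an \emph{idempotent} clone to $\mathbf 1$ is automatically a clone homomorphism (idempotency kills the only obstruction, since on $\mathbf 1$ the height~$1$ identities already determine the image on all projections).

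**Relating primed and unprimed items.** The key point is that composing an h1 clone homomorphism with a clone homomorphism, or with another h1 clone homomorphism, yields an h1 clone homomorphism --- this is immediate since height~$1$ identities are preserved under such compositions. Now $\relB$ is obtained from (the core of) $\relA$ by adding singletons, and the core is homomorphically equivalent to $\relA$; so by Theorem~\ref{thm:new_finite} applied with $\relA$ and $\relB$ there is an h1 clone homomorphism $\cloA \to \cloB$. Dually, since $\relA$ is homomorphically equivalent to its core, and the idempotent core $\relB$ is an expansion of that core, the standard restriction-of-polymorphisms argument gives an h1 clone homomorphism $\cloB \to \cloA$ as well. (More conceptually: $\relA$ and $\relB$ are homomorphically equivalent to pp-powers of each other, so by Theorem~\ref{thm:new_finite} h1 clone homomorphisms exist in \emph{both} directions between $\cloA$ and $\cloB$.) Hence there is an h1 clone homomorphism $\cloA \to \mathbf 1$ if and only if there is one $\cloB \to \mathbf 1$, which is (ii') $\Leftrightarrow$ (ii). Likewise, $\cloA$ contains a cyclic (resp.\ Siggers) operation if and only if $\cloB$ does: a cyclic operation is precisely a nonempty set of height~$1$ identities of a special shape, and its satisfiability transfers along h1 clone homomorphisms in both directions --- if $\cloB$ has a cyclic term of arity $n$, pulling it back along the h1 clone homomorphism $\cloB \to \cloA$ gives an operation of $\cloA$ satisfying the same height~$1$ identity, hence a cyclic term in $\cloA$; symmetrically for the other direction. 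This gives (iii) $\Leftrightarrow$ (iii') and (iv) $\Leftrightarrow$ (iv').

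**The main obstacle.** The only genuinely delicate point is the direction (iii') $\Rightarrow$ (iii) (equivalently, ruling out (ii')): one must be sure that the existence of a cyclic operation in the \emph{non-idempotent} clone $\cloA$ really forces one in the idempotent core $\cloB$. The safe route is to go through the h1 clone homomorphism characterization rather than argue directly with operations: since satisfiability of a fixed system of height~$1$ identities (such as the cyclic identity, or the Siggers identity) in a clone $\clone{C}$ is equivalent to the non-existence of an h1 clone homomorphism from the free clone for that system into $\clone{C}$ --- no, more simply, it is equivalent to a property preserved by h1 clone homomorphisms in the target direction and reflected in the source direction --- the two-way existence of h1 clone homomorphisms between $\cloA$ and $\cloB$ does the job. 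Finally I would assemble everything: (i)--(iv) equivalent by the cited results, (ii) $\Leftrightarrow$ (ii') and (iii) $\Leftrightarrow$ (iii') and (iv) $\Leftrightarrow$ (iv') by the transfer argument above, and remark that the parenthetical ``consequently $\CSP(\relA)$ is NP-complete'' is not part of the equivalence but a separate consequence discussed in Section~\ref{sec:wrapup}.
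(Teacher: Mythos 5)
Your overall architecture is the same as the paper's: the unprimed equivalences are delegated to the cited term theorems, and the primed versions are obtained from the existence of h1 clone homomorphisms in \emph{both} directions between $\cloA$ and $\cloB$ (the paper packages this as Corollary~\ref{cor:weak_homo_to_core}, derived from Theorem~\ref{thm:new_infinite}; your derivation via Theorem~\ref{thm:new_finite} amounts to the same thing). The transfer of the cyclic and Siggers identities along these homomorphisms, and the closure of h1 clone homomorphisms under composition, are correct and are exactly what the paper uses.

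The one genuine gap is your justification of (i)~$\Rightarrow$~(ii). You assert that any h1 clone homomorphism from an idempotent clone to $\mathbf 1$ is ``automatically a clone homomorphism'' because ``the height~$1$ identities already determine the image on all projections.'' This misidentifies the obstruction: an h1 clone homomorphism into $\mathbf{1}$ does automatically send $\pi^n_i$ to $\pi^n_i$ (the identity $\pi^n_i(x_1,\dots,x_n)\approx \pi^n_i(y_1,\dots,y_{i-1},x_i,y_{i+1},\dots,y_n)$ has height $1$ and forces the image to depend only on the $i$-th coordinate), but nothing in the definition of an h1 clone homomorphism controls its value on a genuine composite $f(g_1,\dots,g_n)$, so preservation of composition is not automatic, idempotent or not. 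The correct statement is existential rather than pointwise, and this is exactly where Taylor's theorem~\cite{T77} enters in the paper's proof: if $\cloB$ is idempotent and admits no clone homomorphism to $\mathbf 1$, then $\cloB$ satisfies a non-trivial system of height-$1$ identities, and since h1 clone homomorphisms preserve such systems while $\mathbf 1$ satisfies none of them, no h1 clone homomorphism $\cloB\to\mathbf 1$ can exist. Alternatively, you can avoid the issue entirely by closing the cycle as (i)~$\Rightarrow$~(iii)~$\Rightarrow$~(ii)~$\Rightarrow$~(i), using~\cite{BK12} for the first implication and the observation that no projection of arity $n\geq 2$ is cyclic for the second; with that rerouting, the rest of your argument goes through.
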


In particular, the tractability conjecture can be equivalently formulated as follows. 

\begin{conj} \label{conj:new_finite}
Let $\relA$ be a finite relational structure. Then one of the following holds.
\begin{itemize}
\item The polymorphism clone of $\relA$ maps to $\mathbf{1}$ via an h1 clone homomorphism (and consequently $\CSP(\relA)$ is NP-complete).
\item $\CSP(\relA)$ is solvable in polynomial-time.
\end{itemize}
\end{conj}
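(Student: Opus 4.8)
The plan is to derive Theorem~\ref{thm:equiv_conditions} formally from Theorem~\ref{thm:new_finite} together with the equivalences (i)~$\Leftrightarrow$~(ii)~$\Leftrightarrow$~(iii)~$\Leftrightarrow$~(iv) that its statement already grants; no fresh hardness or tractability ingredient is needed, only a way of moving those criteria between $\cloB$ and $\cloA$. The key preliminary step is to observe that $\relA$ and its idempotent core $\relB$ are connected by the constructions (a),~(b),~(c) \emph{in both directions}, and hence, by Theorem~\ref{thm:new_finite}, that there exist h1 clone homomorphisms $\xi\colon\cloA\to\cloB$ and $\eta\colon\cloB\to\cloA$. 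For $\xi$ this is immediate, since $\relB$ is obtained from $\relA$ by passing to the core~(b) and then adjoining all unary singleton relations~(c). For $\eta$, write $\relstr{C}$ for the core of $\relA$; then $\relstr{C}$ is a reduct of $\relB$ (forget the singletons), hence pp-interpretable in $\relB$, i.e.\ obtainable from $\relB$ by construction~(a), and $\relstr{C}$ is moreover homomorphically equivalent to $\relA$~(b). Chaining these two constructions and applying Theorem~\ref{thm:new_finite} once more produces $\eta$.

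The remainder is a transport argument. The conditions (iii) and (iv) are witnessed by operations satisfying the height $1$ cyclic law $t(x_1,\dots,x_n)\approx t(x_2,\dots,x_n,x_1)$, respectively the height $1$ Siggers law $t(a,r,e,a)\approx t(r,a,r,e)$; since an h1 clone homomorphism preserves arities and commutes with the formation of minors, $\xi$ carries such a witness in $\cloA$ to one in $\cloB$ and $\eta$ carries one in $\cloB$ back to $\cloA$, giving (iii)~$\Leftrightarrow$~(iii') and (iv)~$\Leftrightarrow$~(iv'). The condition (ii) concerns h1 clone homomorphisms themselves, which compose: an h1 clone homomorphism $\cloB\to\mathbf 1$ precomposed with $\xi$ is one $\cloA\to\mathbf 1$, and one $\cloA\to\mathbf 1$ precomposed with $\eta$ is one $\cloB\to\mathbf 1$, whence (ii)~$\Leftrightarrow$~(ii'). (That (ii) really does belong with the unprimed cycle can also be seen directly: no $4$-ary projection on a two-element set satisfies the Siggers law, so (iv)~$\Rightarrow$~(ii); conversely $\neg$(iv)~$\Rightarrow$~$\neg$(i) by the granted equivalences, and $\neg$(i) provides a clone homomorphism $\cloB\to\mathbf 1$, which is in particular an h1 clone homomorphism.) Combining everything, all seven conditions are equivalent.

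Finally, Conjecture~\ref{conj:new_finite} becomes a verbatim restatement of Conjecture~\ref{conj:old_finite}: by (ii)~$\Leftrightarrow$~(ii') the hypothesis ``$\cloB$ maps homomorphically to $\mathbf 1$'' may be replaced by ``$\cloA$ maps to $\mathbf 1$ via an h1 clone homomorphism'', leaving the NP-hardness conclusion intact.

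I expect the only point needing care to be the bookkeeping in the first paragraph --- verifying that passing to a reduct is genuinely an instance of construction~(a) and that the two h1 clone homomorphisms $\xi$ and $\eta$ really exist --- but this is routine once Theorem~\ref{thm:new_finite} is available, and everything after it is purely formal. I therefore do not anticipate a real obstacle: the substance has already been absorbed into Theorem~\ref{thm:new_finite}.
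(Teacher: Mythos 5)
Your proposal is correct and follows essentially the same route as the paper: the paper derives the equivalence of Conjectures~\ref{conj:old_finite} and~\ref{conj:new_finite} from Theorem~\ref{thm:equiv_conditions}, whose primed/unprimed equivalences it obtains exactly as you do, via Corollary~\ref{cor:weak_homo_to_core} (h1 clone homomorphisms in both directions between $\cloA$ and $\cloB$, coming from mutual pp-constructibility and Theorem~\ref{thm:new_finite}) and then transporting the height~1 conditions along these maps, with the equivalences (i)--(iv) taken from the cited literature just as you assume them.
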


\subsection{The infinite case} For countable $\omega$-categorical structures $\relA$ and $\relB$ we have the same general reductions (a), (b), and ({c}) -- only the notion of a \emph{core} has to be replaced by that of a \emph{model-complete core}. A large part of the research on CSPs of infinite structures investigates when a given finite structure $\relB$ can be obtained from an infinite structure $\relA$ via those constructions~\cite{BPP-projective-homomorphisms}. For what concerns the reduction by pp-interpretations, we have the following theorem from~\cite{Topo-Birk}.

\begin{theorem} \label{thm:old_infinite}
Let $\relA$ be a countable $\omega$-categorical and $\relB$ be a finite relational structure, and let $\cloA$, $\cloB$ their polymorphism clones.
Then the following are equivalent.
\begin{itemize}
\item[(i)] $\relB$ is pp-interpretable in $\relA$.
\item[(ii)] $\cloB \in \EHSPfin \cloA $.
\item[(iii)] There exists a continuous clone homomorphism from $\cloA$ into $\cloB$, i.e., a  continuous mapping $\cloA \to \cloB$ preserving identities. 
\end{itemize}
\end{theorem}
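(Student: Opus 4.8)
The plan is to prove the cycle (i) $\Rightarrow$ (iii) $\Rightarrow$ (ii) $\Rightarrow$ (i). The implications (i) $\Rightarrow$ (iii) and (ii) $\Rightarrow$ (i) are straightforward (the latter using the Bodirsky--Ne\v{s}et\v{r}il Galois correspondence, which is where $\omega$-categoricity of $\relA$ enters), so that almost everything of substance is the ``topological Birkhoff'' implication (iii) $\Rightarrow$ (ii).

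For (i) $\Rightarrow$ (iii): a $d$-dimensional pp-interpretation of $\relB$ in $\relA$ comes with a pp-definable $S\subseteq A^d$ and a surjection $h\colon S\to B$; every polymorphism $f$ of $\relA$ acts coordinatewise on $A^d$, preserves $S$ and the congruence $h^{-1}({=_B})$, and thus induces an operation $f^{\relB}$ on $B$ which, because the $h$-preimages of the relations of $\relB$ are pp-definable in $\relA$, is a polymorphism of $\relB$. The map $\xi\colon f\mapsto f^{\relB}$ is a clone homomorphism $\cloA\to\cloB$, and it is continuous: each induced operation is computed from $f$ using only the finitely many coordinates occurring in the (finite-dimensional, pp-) interpretation, so $\xi(f)$ depends on only finitely much of $f$. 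For (ii) $\Rightarrow$ (i): a witness for $\cloB\in\EHSPfin\cloA$ exhibits a finite algebra $\algB'$ on the set $B$, lying in $\HHH\SSS\PPPfin\cloA$ with $\Clo(\algB')\subseteq\cloB$, as a quotient $\alg S/\theta$ by a congruence $\theta$ of a subalgebra $\alg S$ of some finite power $\algA^d$ of $\algA=(A;\cloA)$; the set $S\subseteq A^d$ and the relation $\theta$ are $\cloA$-invariant, hence pp-definable in $\relA$ by the Bodirsky--Ne\v{s}et\v{r}il theorem (valid since $\relA$ is $\omega$-categorical), and the quotient map $h\colon S\to B$ pulls the relations of $\relB$ back to $\cloA$-invariant, hence pp-definable, relations because $\Clo(\algB')\subseteq\cloB=\Pol(\relB)$; so $h$ is a $d$-dimensional pp-interpretation of $\relB$ in $\relA$.

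For (iii) $\Rightarrow$ (ii): let $\xi\colon\cloA\to\cloB$ be a continuous clone homomorphism and $n=|B|$, say $B=\{1,\dots,n\}$. Since $\xi$ is a clone homomorphism, $(B;\xi(\cloA))$ satisfies every identity valid in $\algA$, hence lies in the variety generated by $\algA$ by Birkhoff's theorem; that is, it is a quotient of a subalgebra of some power $\algA^{I}$, and lifting the finitely many generators of $B$ lets us assume $I\subseteq A^{n}$. Identifying the free $n$-generated algebra of this variety with $\cloA^{(n)}$ (the $n$-ary part of $\cloA$, viewed inside $\algA^{A^n}$ as the subalgebra generated by the coordinate projections), the quotient map becomes the continuous map $q\colon\cloA^{(n)}\to B$, $g\mapsto\xi(g)(1,\dots,n)$. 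The remaining, and crucial, task is to replace $I$ by a \emph{finite} set $I_0\subseteq A^n$: concretely, one wants $I_0$ such that $g\rest_{I_0}=g'\rest_{I_0}$ already implies $q(g)=q(g')$, for then $\{g\rest_{I_0}\colon g\in\cloA^{(n)}\}$ is a subalgebra of $\algA^{I_0}$ whose quotient by the kernel of the induced map $g\rest_{I_0}\mapsto q(g)$ is a finite algebra on $B$ with clone inside $\cloB$, witnessing $\cloB\in\EHSPfin\cloA$.

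This compression is the main obstacle, and it is precisely where continuity of $\xi$ and $\omega$-categoricity of $\relA$ are both needed. Continuity gives, for each $g\in\cloA^{(n)}$, a finite $F_g\subseteq A^n$ such that $g\rest_{F_g}$ determines $q(g)$; the basic open sets $U_g=\{g'\colon g'\rest_{F_g}=g\rest_{F_g}\}$ cover $\cloA^{(n)}$, $q$ is constant on each, and a finite subcover $U_{g_1},\dots,U_{g_m}$ yields the desired $I_0=F_{g_1}\cup\dots\cup F_{g_m}$. For a finite structure $\relA$ such a finite subcover exists trivially because $A^{A^n}$ is compact; for $\omega$-categorical $\relA$ the space $\cloA^{(n)}$ fails to be compact, and one instead uses that $\Aut(\relA)$ is oligomorphic --- via a compactness/K\"onig-type argument organised along the finitely many orbits of tuples --- to extract the finite subcover, equivalently to see that the finite-index congruence $\ker q$ on the free algebra $\cloA^{(n)}$ is finitely determined. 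Carrying out this argument, together with the routine verifications above, completes the proof.
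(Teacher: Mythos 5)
Your overall architecture is sound and coincides with the standard one. Note first that the paper does not prove Theorem~\ref{thm:old_infinite} itself --- it is quoted from~\cite{Topo-Birk} --- so the closest in-paper material to compare with is Proposition~\ref{prop:abstract} (the algebraic/Birkhoff core), Proposition~\ref{prop:topological}(i) (the finite-power version, stated with \emph{uniform} continuity), and Proposition~\ref{prop:uniform} (which bridges continuity and uniform continuity). Your implications (i)~$\Rightarrow$~(iii) and (ii)~$\Rightarrow$~(i) are correct and are exactly the standard arguments, with $\omega$-categoricity entering through the Bodirsky--Ne\v{s}et\v{r}il correspondence precisely where you say it does.

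The one genuine soft spot is the crux of (iii)~$\Rightarrow$~(ii). As literally stated, ``a finite subcover $U_{g_1},\dots,U_{g_m}$ yields the desired $I_0$'' does not work: $\cloA^{(n)}$ is a non-compact Polish space, and a cover by basic clopen sets on which $q$ is constant need not admit a finite subcover; indeed, a merely continuous map from a non-compact clone to a finite set need not be uniformly continuous, so oligomorphicity of $\Aut(\relA)$ alone cannot be the whole story. What actually saves the argument is that $\xi$ is a \emph{clone homomorphism} and hence satisfies $\xi(\alpha f)=\xi(\alpha)\xi(f)$ for invertibles $\alpha$: one covers $\cloA^{(n)}$ by the $\Aut(\relA)$-saturations of the sets $U_g$ (these do admit a finite subcover, by the K\"onig-type argument over the finitely many orbits of tuples that you allude to), and then transfers the conclusion from a representative back to an arbitrary $g$ by composing with an automorphism on the outside. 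This is exactly the content of Proposition~\ref{prop:uniform} in the paper (continuity plus preservation of outer composition with invertibles implies uniform continuity), after which your finite set $I_0$ exists and the rest of your compression argument goes through as written. So the gap is localized and fillable, but the mechanism you name (``extract the finite subcover'' of the $U_g$ themselves) is not the right one, and the role of the clone-homomorphism hypothesis at this step should be made explicit.
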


Regarding applicability to CSPs, this theorem suffers from the same shortcomings as its finite counterpart, as discussed above. But in the infinite case, two other unsatisfactory features which are not present in the situation for finite structures arise in addition. 

Firstly, the class of all infinite structures being too vast to be approached as a whole, 
research on CSPs of infinite structures focusses on structures with particular properties, such as the Ramsey property or finite boundedness; cf.~for example~\cite{BP-reductsRamsey}. By assuming that a structure $\relA$ is a model-complete core one might lose these properties. In other words, if we start with an $\omega$-categorical structure $\relA$ satisfying a certain property such as the Ramsey property, then the unique model-complete core which is homomorphically equivalent to $\relA$ might fail to satisfy this property.
% Apart from the fact that the existence of the model-complete core of $\relA$ requires $\omega$-categoricity, 
%Passing to this new structure 
This results in a serious technical disadvantage: %after all, the properties required on $\relA$ were required for a reason, and 
much of the machinery developed for the investigation of infinite CSPs cannot be applied, for example, in the absence of the Ramsey property.

Secondly, contrary to the situation in the finite, adding constants to a model-complete core does not terminate after a finite number of steps in the infinite case. Hence, while there is an analog of the concept of a core for the infinite, namely that of a model-complete core, there is no analog of the notion of an idempotent core, or an idempotent polymorphism clone, for the $\omega$-categorical setting. 
 This leads to less elegant formulations than in the finite, such as in the following conjecture of Bodirsky and Pinsker (cf.~\cite{BPP-projective-homomorphisms}).

\begin{conj}\label{conj:old}
Let $\relA$ be a reduct of a finitely bounded homogeneous structure, and let $\relB$ be its model-complete core. Then one of the following holds.
\begin{itemize}
\item There exist elements $b_1,\ldots,b_n$ in $\relB$ such that the polymorphism clone of the expansion of $\relB$ by those constants maps homomorphically and continuously to $\mathbf 1$ (and consequently $\CSP(\relA)$ is NP-complete).
\item $\CSP(\relA)$ is solvable in polynomial-time.
\end{itemize}
\end{conj}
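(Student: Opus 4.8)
The plan is to reduce Conjecture~\ref{conj:old} to the tractability half of an ``h1 reformulation'' and then attack that with the algebraic method. First I would pass from the model-complete core $\relB$ and its expansions by constants to the polymorphism clone $\cloA$ itself: combining the $\omega$-categorical refinement of Theorem~\ref{thm:new_finite} that this paper establishes (h1 clone homomorphisms, the operator $\Tra$, and the natural uniformity, playing the role that $\EHSPfin$ and continuous clone homomorphisms play in Theorem~\ref{thm:old_infinite}) one shows that the first bullet of Conjecture~\ref{conj:old} --- existence of constants $b_1,\dots,b_n$ with $\Pol(\relB,b_1,\dots,b_n)$ mapping homomorphically and continuously to $\mathbf 1$ --- is equivalent to the existence of a uniformly continuous h1 clone homomorphism from $\cloA$ to $\mathbf 1$. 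This replaces the conjecture by the single assertion: \emph{if $\cloA$ admits no uniformly continuous h1 clone homomorphism to $\mathbf 1$, then $\CSP(\relA)$ is in $\mathrm P$}; the converse (NP-hardness when such a homomorphism exists) is the easy, already known direction.

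For that hardness direction: a uniformly continuous h1 clone homomorphism $\cloA\to\mathbf 1$ places, by the $\ETraP$-characterization, the structure consisting of all relations on a two-element set (up to homomorphic equivalence and pp-power) in the scope of $\relA$, so some NP-hard finite structure reduces to $\relA$ via the reductions (a)--(c); NP-membership holds because $\relA$ is a reduct of a finitely bounded homogeneous structure. This part I expect to be routine once the reflection/h1 machinery of the earlier sections is in place.

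The tractability direction is the heart of the matter and is, at present, wide open --- it is exactly the infinite-domain CSP dichotomy. The route I would attempt mirrors the finite case (Theorem~\ref{thm:equiv_conditions}, and ultimately its algorithmic completion): from the absence of a uniformly continuous h1 clone homomorphism to $\mathbf 1$ one would first extract a uniformly continuous analogue of a cyclic or Siggers polymorphism in $\cloA$ --- a ``loop lemma'' step --- and then, granting such a polymorphism, build a polynomial-time algorithm, the plausible ingredients being bounded width (local consistency), a few-subpowers/Maltsev-type argument, or a Bulatov--Zhuk-style decomposition, each adapted to the $\omega$-categorical setting and using finite boundedness to keep the consistency procedures effective and the relevant instance sizes bounded.

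The main obstacle is therefore twofold and genuinely unresolved. First, the loop lemma: turning ``no uniformly continuous h1 map to $\mathbf 1$'' into a concrete, uniformly continuous, well-behaved polymorphism of $\cloA$ \emph{without} extra hypotheses such as the Ramsey property, where canonical-function and Ramsey-theoretic arguments would otherwise do the work. Second, the algorithmic dichotomy: producing, from such a polymorphism, a uniform polynomial-time algorithm --- which even for finite templates required the deepest results in the area, and whose $\omega$-categorical generalization must additionally cope with the non-termination of adding constants and with the interaction between the algorithm and the natural uniformity. What the plan above can fully deliver is the reformulation: a clean restatement of Conjecture~\ref{conj:old} purely in terms of the h1 identities of $\cloA$ and the uniformity on it, i.e.\ the ``new elegant dichotomy conjecture'' promised in the abstract; the dichotomy itself remains a conjecture.
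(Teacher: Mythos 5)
The statement you were asked about is a \emph{conjecture} (due to Bodirsky and Pinsker), and the paper does not prove it --- nor does anyone: its tractability half is the infinite-domain CSP dichotomy itself. What the paper actually does is formulate the new Conjecture~\ref{conj:new} and prove the one-way implication that Conjecture~\ref{conj:old} implies Conjecture~\ref{conj:new}, using Corollary~\ref{cor:weak_homo_to_core} together with Proposition~\ref{prop:uniform}. You correctly recognize that no complete proof is attainable, and your hardness discussion (NP-hardness from a uniformly continuous h1 clone homomorphism to $\mathbf 1$ via Theorem~\ref{thm:new_infinite}, NP-membership from finite boundedness) is in line with the paper.

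However, your reformulation step contains a genuine gap. You assert that the first bullet of Conjecture~\ref{conj:old} --- existence of constants $b_1,\dots,b_n$ such that the polymorphism clone of the expansion of $\relB$ maps homomorphically and continuously to $\mathbf 1$ --- is \emph{equivalent} to the existence of a uniformly continuous h1 clone homomorphism $\cloA\to\mathbf 1$. The paper establishes only the forward implication; the converse is stated explicitly as an open Problem at the end of Section~\ref{sec:wrapup}, and the authors even suggest it as a route to \emph{disproving} Conjecture~\ref{conj:old}. Consequently, your plan does not reduce Conjecture~\ref{conj:old} to the h1 assertion ``no uniformly continuous h1 clone homomorphism $\cloA\to\mathbf 1$ implies $\CSP(\relA)\in\mathrm{P}$'': even if that assertion (i.e.\ Conjecture~\ref{conj:new}) were proved, a structure whose clone does map to $\mathbf 1$ by a uniformly continuous h1 clone homomorphism might still fail the first bullet of Conjecture~\ref{conj:old}, so the old conjecture would not follow. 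The clean restatement you promise at the end is exactly Conjecture~\ref{conj:new}, which the paper shows is \emph{implied by}, but is not known to be equivalent to, Conjecture~\ref{conj:old}; the remainder of your plan (a loop-lemma step plus an algorithmic dichotomy) is, as you say yourself, entirely open and is not something the paper claims either.
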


We are going to prove the following theorem which will avoid the issues raised above.

\begin{theorem} \label{thm:new_infinite}
Let $\relA$ be an at most countable $\omega$-categorical and $\relB$ be a finite relational structure, and let $\cloA$, $\cloB$ their polymorphism clones.
Then the following are equivalent.
\begin{itemize}
\item[(i)] $\relB$ is homomorphically equivalent to a pp-power of $\relA$, or equivalently, $\relB$ can be obtained from $\relA$ by a finite number of constructions among (a),~(b),~({c}).
\item[(ii)] $\cloB \in \ETraPfin \cloA$, or equivalently, $\cloB \in \ETraP \cloA$.%; here, $\Tra$ denotes the new operator of taking \transfers{}.
\item[(iii)] There exists a~uniformly continuous h1 clone homomorphism from $\cloA$ into $\cloB$, i.e., a~uniformly continuous mapping $\cloA \to \cloB$ preserving identities of height $1$. 
\end{itemize}
\end{theorem}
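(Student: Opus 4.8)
The plan parallels the proof of Theorem~\ref{thm:new_finite}, of which the present statement is the topological refinement: for $\relA$ finite the clone $\cloA$ is discrete and ``uniformly continuous'' becomes vacuous, so the finite case is recovered. I would split the equivalence into a \emph{semantic} half, (i)$\Leftrightarrow$(ii), handled via the Pol--Inv Galois correspondence for primitive positive definability together with the operator calculus for $\EEE$, $\Tra$, $\PPP$, and a \emph{syntactic} half, (ii)$\Leftrightarrow$(iii), handled via a uniform-continuity-aware version of the height~$1$ analogue of Birkhoff's theorem. The $\omega$-categoricity of $\relA$ enters twice: it makes the Galois correspondence available, so that the relational constructions (a),~(b),~(c) on $\relA$ have exact clone-theoretic counterparts on $\cloA=\Pol(\relA)$ --- in particular the polymorphism clone of a pp-power of $\relA$ is a matrix power of $\cloA$ and hence lies in $\EHSP\cloA$, just as behind Theorem~\ref{thm:old_infinite} --- and it equips each $\cloA^{(n)}$ with the uniformity of pointwise convergence whose finite-index entourages are governed by the finitely many $\Aut(\relA)$-orbits of $n$-tuples, which is precisely what ``uniformly continuous into the finite (hence discrete) clone $\cloB$'' latches onto. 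The purely algebraic ingredients --- the basic properties of the \transfer{} construction, the fact that \transfers{} subsume the operators $\HHH$ and $\SSS$, the operator identities making $\ETraPfin$ and $\ETraP$ robust, and the abstract height~$1$ Birkhoff theorem --- are as in the finite case, and I would simply quote them. Finally, the equivalence of the two formulations of~(i) is a purely relational statement --- a pp-interpretation factors as a pp-power followed by a homomorphic equivalence, and adjoining a singleton to a model-complete core is absorbed in the same way --- which I would dispatch separately.

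For the easy directions the finite arguments transfer with only cosmetic changes. For (i)$\Rightarrow$(ii): if $\relB$ is homomorphically equivalent to a pp-power $\relA'$ of $\relA$, then $\Pol(\relA')\in\EHSP\cloA$ by the matrix-power remark above, and the two homomorphisms witnessing the homomorphic equivalence form a pair of maps exhibiting $\cloB$ as a \transfer{} of $\Pol(\relA')$; the operator calculus then places $\cloB$ in $\Tra\EHSP\cloA\subseteq\ETraP\cloA$, which by the quoted operator identities is the same as $\cloB\in\ETraPfin\cloA$. For (ii)$\Rightarrow$(iii): \transfers{}, finite powers, and expansions all preserve identities of height~$1$, and following a single operation of $\cloA$ through such a chain of constructions yields an explicit minor-preserving map $\cloA\to\cloB$, i.e.\ an h1 clone homomorphism; it is uniformly continuous because finite powers act coordinatewise and \transfers{} by pre- and post-composition with fixed maps, and $\cloB$ is finite and discrete, so the image of an operation $f$ is determined by its values on a fixed finite set of tuples.

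The substantial direction is (iii)$\Rightarrow$(ii), equivalently (iii)$\Rightarrow$(i), and this is where I expect the main obstacle. Let $\xi\colon\cloA\to\cloB$ be a uniformly continuous h1 clone homomorphism. Step~A finitizes $\xi$: uniform continuity provides, for each arity $n$, a finite set $F_n\subseteq A^n$ such that $\xi(f)$ depends only on $f\rest F_n$, and using oligomorphy one may choose the $F_n$ coherently and generically with respect to the orbit structure; the decisive gain is that the ``dimension'' of the data describing $\xi$ thereby becomes \emph{finite}. Step~B turns this data into a relational construction: one uses $\xi$ and the sets $F_n$ to write primitive positive formulas over $\relA$ defining the relations of a pp-power $\relA'$ of $\relA$ of some finite dimension, and then verifies that $\xi$ induces homomorphisms $\relB\to\relA'$ and $\relA'\to\relB$, so that $\relB$ is homomorphically equivalent to $\relA'$; equivalently, one reads off directly that $\cloB$ is a \transfer{} of a finite power of $\cloA$ up to expansion, i.e.\ $\cloB\in\ETraPfin\cloA$. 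The hard part is exactly Step~B: converting the abstract, merely minor-preserving assignment $\xi$ into genuine pp-definable relations and genuine homomorphisms and carrying out the attendant relational bookkeeping, and --- above all --- making precise that uniform continuity is both \emph{necessary} here (it is exactly what keeps the dimension of $\relA'$ finite: a general, possibly discontinuous, h1 clone homomorphism corresponds only to an infinitary ``pp-power'') and \emph{available} (it is the conjunction of uniform continuity with oligomorphy that yields the finite, orbit-controlled sets $F_n$). Once Step~B is in place, (ii)$\Rightarrow$(i) follows from the quoted correspondence between pp-powers, homomorphic equivalence, and the operators $\EEE$, $\Tra$, $\PPPfin$, and the circle of implications is complete.
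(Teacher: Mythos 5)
Your overall architecture is exactly the paper's: the two forms of (i) via the relational lemmas (Corollary~\ref{cor:relational}), (i)$\Leftrightarrow$(ii) via Proposition~\ref{prop:algebraic} and Corollary~\ref{cor:pp_mut}, and (ii)$\Leftrightarrow$(iii) via the topological height-$1$ Birkhoff statement (Proposition~\ref{prop:topological}(ii)); the easy directions are handled as you describe. The problem is that you leave the crux, (iii)$\Rightarrow$(ii), as an announced ``Step B'' without the construction, and the two ideas that make it work are missing from your sketch. First, your Step~A produces a finite set $F_n\subseteq A^n$ \emph{for each arity} $n$, which is infinitely much data and does not assemble into a finite power of $A$; the key observation is that since $B$ is finite one only needs uniform continuity at the single arity $n=|B|$: writing $B=\{b_1,\dots,b_n\}$, one gets $m\geq 1$ and tuples $a^1,\dots,a^m\in A^n$ such that $n$-ary $f,g$ agreeing on these tuples satisfy $\xi(f)(b_1,\dots,b_n)=\xi(g)(b_1,\dots,b_n)$, and the values of $\xi$ on \emph{all other arities} are then recovered from the $n$-ary case because $\xi$ preserves height-$1$ identities (every $\xi(f)$ is determined by $\xi$ of the $n$-ary minors $f(\pi^n_{i_1},\dots,\pi^n_{i_k})$). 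Second, the actual witness for $\cloB\in\ETraPfin\cloA$ must be exhibited: let $a_i\in A^m$ be the $i$-th components of $a^1,\dots,a^m$, let $S\subseteq A^m$ be the closure of $\{a_1,\dots,a_n\}$ under the componentwise action of $\cloA$, define $h\colon S\To B$ by $h(g(a_1,\dots,a_n)):=\xi(g)(b_1,\dots,b_n)$ (well defined precisely by the uniform-continuity choice above) and $h'\colon B\To S$ by $h'(b_i):=a_i$; one then checks, using the h1 property, that $h(f(h'(x_1),\dots,h'(x_k)))=\xi(f)(x_1,\dots,x_k)$ for every $f\in\cloA$, so that $\xi[\cloA]$ is a \transfer{} of an element of $\SSS\Pfin\cloA$ and hence $\cloB\in\ETraPfin\cloA$. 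Without this your ``one reads off directly'' is an assertion, not a proof.

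Two smaller points. Oligomorphy of $\relA$ plays no role in (iii)$\Rightarrow$(ii) --- Proposition~\ref{prop:topological}(ii) holds for an arbitrary function clone $\cloA$ provided $\cloB$ has finite domain --- so your appeal to the orbit structure to choose the sets $F_n$ ``coherently'' is a red herring; $\omega$-categoricity is consumed entirely on the relational side, in Proposition~\ref{prop:algebraic} and Lemma~\ref{lem:adding_const}. Also, your alternative plan of writing pp-formulas directly from $\xi$ is unnecessary: once $\cloB\in\ETraPfin\cloA$ is established algebraically, Corollary~\ref{cor:pp_mut} already returns you to statement~(i).
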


This allows us, in particular, to formulate the following conjecture, which is implied by Conjecture~\ref{conj:old} but not necessarily equivalent to it; we refer to Section~\ref{sec:wrapup} for more details.

\begin{conj}\label{conj:new}
Let $\relA$ be a reduct of a finitely bounded homogeneous structure, and let $\cloA$ be its polymorphism clone. Then one of the following holds.
\begin{itemize}
\item $\cloA$ maps to $\mathbf{1}$ via a uniformly continuous h1 clone homomorphism (and consequently $\CSP(\relA)$ is NP-complete).
\item $\CSP(\relA)$ is solvable in polynomial-time.
\end{itemize}
\end{conj}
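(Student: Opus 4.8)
Since Conjecture~\ref{conj:new} genuinely is a conjecture — its polynomial-time alternative is out of reach of present techniques, being essentially a consequence of the still-open Bodirsky--Pinsker Conjecture~\ref{conj:old}, which is open already in the finite case — a proof plan splits into a provable half and a conditional half. The provable half consists of (1) the parenthetical claim that a uniformly continuous h1 clone homomorphism $\cloA\to\mathbf 1$ forces NP-completeness of $\CSP(\relA)$, and (2) the implication Conjecture~\ref{conj:old}~$\Rightarrow$~Conjecture~\ref{conj:new}. The plan is to carry out (1) and (2) in full, using only Theorems~\ref{thm:old_infinite} and~\ref{thm:new_infinite}, and then to isolate precisely what remains open.

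For (1) I would first fix once and for all a finite relational structure $\relstr{S}$ with $\Pol(\relstr{S})=\mathbf 1$ and $\CSP(\relstr{S})$ NP-complete --- for instance positive $1$-in-$3$-SAT, i.e.\ the two-element structure with the single ternary relation $\{(1,0,0),(0,1,0),(0,0,1)\}$, whose polymorphism clone is the clone of projections (no atom of Post's lattice preserves this relation) and whose CSP is NP-complete by Schaefer's theorem. Now suppose $\cloA=\Pol(\relA)$ maps to $\mathbf 1=\Pol(\relstr{S})$ via a uniformly continuous h1 clone homomorphism. Since $\relA$, being a reduct of a finitely bounded homogeneous structure, is at most countable and $\omega$-categorical, and $\relstr{S}$ is finite, the implication (iii)$\Rightarrow$(i) of Theorem~\ref{thm:new_infinite} applies and shows that $\relstr{S}$ is homomorphically equivalent to a pp-power of $\relA$, equivalently that $\relstr{S}$ is obtained from $\relA$ by finitely many of the constructions (a), (b), (c). Composing the polynomial-time reductions attached to these constructions (for pp-interpretations this is standard; for homomorphic equivalence the CSPs coincide, as pp-sentences are preserved by homomorphisms; for adding a singleton to a model-complete core one uses the classical constant-expansion reduction) gives a polynomial-time reduction of $\CSP(\relstr{S})$ to $\CSP(\relA)$, so $\CSP(\relA)$ is NP-hard; and $\CSP(\relA)\in\mathrm{NP}$ because $\relA$ is finitely bounded. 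Hence $\CSP(\relA)$ is NP-complete.

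For (2) I would assume Conjecture~\ref{conj:old}, take $\relA$ a reduct of a finitely bounded homogeneous structure, and let $\relB$ be its model-complete core, which is again at most countable $\omega$-categorical and satisfies $\CSP(\relB)=\CSP(\relA)$. If $\CSP(\relA)$ is polynomial-time solvable we are in the second alternative of Conjecture~\ref{conj:new} and are done. Otherwise Conjecture~\ref{conj:old} provides elements $b_1,\dots,b_n$ of $\relB$ such that $\Pol((\relB,b_1,\dots,b_n))$ admits a continuous clone homomorphism to $\mathbf 1=\Pol(\relstr{S})$; since $(\relB,b_1,\dots,b_n)$ is at most countable $\omega$-categorical and $\relstr{S}$ is finite, Theorem~\ref{thm:old_infinite} in the direction (iii)$\Rightarrow$(i) shows that $\relstr{S}$ is pp-interpretable in $(\relB,b_1,\dots,b_n)$. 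On the other hand $(\relB,b_1,\dots,b_n)$ is obtained from $\relA$ by construction (b) (passing to the model-complete core) followed by $n$ applications of construction (c), each adding a singleton relation to a model-complete core, which is again a model-complete core. Concatenating with the pp-interpretation exhibits $\relstr{S}$ as obtained from $\relA$ by finitely many of the constructions (a), (b), (c); equivalently, $\relstr{S}$ is homomorphically equivalent to a pp-power of $\relA$, and Theorem~\ref{thm:new_infinite} in the direction (i)$\Rightarrow$(iii), applied to the finite structure $\relstr{S}$, produces a uniformly continuous h1 clone homomorphism $\cloA\to\Pol(\relstr{S})=\mathbf 1$. This is the first alternative of Conjecture~\ref{conj:new}, so Conjecture~\ref{conj:new} holds for $\relA$.

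The main obstacle is, unsurprisingly, the polynomial-time alternative itself: steps (1) and (2) reduce Conjecture~\ref{conj:new} to Conjecture~\ref{conj:old}, but the latter is open already in the finite case (Conjecture~\ref{conj:old_finite}, the tractability conjecture), so nothing closer to an unconditional proof is in reach. I would moreover not expect Conjecture~\ref{conj:new} to be equivalent to Conjecture~\ref{conj:old}: reversing the reduction of step (2) would require turning an arbitrary chain of constructions (a), (b), (c) producing $\relstr{S}$ from $\relA$ into a single pp-interpretation inside a constant expansion of the model-complete core, and Example~\ref{ex:hepp} shows that such a normalization is impossible in general. Thus the realistic content of a ``proof'' of the statement is exactly what (1) and (2) deliver --- the NP-hardness consequence, unconditionally, and the dichotomy conditionally on Conjecture~\ref{conj:old} --- with the polynomial-time side flagged as the genuine open problem.
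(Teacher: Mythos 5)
Your proposal is correct and takes essentially the same approach as the paper: the paper likewise treats the statement as a genuine conjecture and establishes exactly your two provable halves in Section~\ref{sec:wrapup} --- the NP-hardness parenthetical via Corollary~\ref{cor:pp_mut} (with membership in NP from finite boundedness), and the implication Conjecture~\ref{conj:old} $\Rightarrow$ Conjecture~\ref{conj:new}, leaving the polynomial-time alternative open. The only cosmetic difference is in the latter step: the paper upgrades the continuous clone homomorphism from the constant-expanded core to a uniformly continuous one via Proposition~\ref{prop:uniform} and composes it with the uniformly continuous h1 clone homomorphism of Corollary~\ref{cor:weak_homo_to_core}, whereas you detour through Theorem~\ref{thm:old_infinite} and pp-constructibility before applying Theorem~\ref{thm:new_infinite}; both routes rest on the same machinery and are equally valid.
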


\subsection{Coloring of clones by relational structures}\label{sect:intro_colorings}

It came as a big surprise to us that there is a~tight connection between  h1 clone homomorphisms and Sequeira's 
notion of compatibility with projections~\cite{sequeira01} which he used to attack some open problems concerning Maltsev conditions.

Each Maltsev condition determines a filter in the lattice of interpretability types of varieties~\cite{neumann74,garcia.taylor84}. The question whether a given Maltsev condition is implied by the conjunction of two strictly weaker conditions translates into the question whether the corresponding filter is join prime (that is, whether the complement of the filter is closed under joins). 
Garcia and Taylor~\cite{garcia.taylor84} conjectured that two important Maltsev conditions determine join prime filters:

\begin{conjs} \label{conj:taylor}
\ 

\begin{itemize}
  \item The filter of congruence permutable varieties is join prime.
  \item The filter of congruence modular varieties is join prime.
\end{itemize}
\end{conjs}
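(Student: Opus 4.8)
The plan is to recast join primeness as a statement about clones and then to apply the connection between h1 clone homomorphisms and Sequeira's compatibility with projections that we develop later in this paper. Write $\mathcal{C}_i = \Clo(\mathcal{V}_i)$ for the clone of term operations of $\mathcal{V}_i$. The join $\mathcal{V}_1 \vee \mathcal{V}_2$ in the interpretability lattice is the variety in the disjoint union of the two signatures axiomatized by the two sets of axioms, and on the level of clones this is exactly the coproduct $\mathcal{C}_1 \ast \mathcal{C}_2$; its elements are represented by terms in the combined signature, that is, by trees whose internal nodes are labelled alternately by members of $\mathcal{C}_1$ and of $\mathcal{C}_2$. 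Since a filter $F$ of the interpretability lattice is join prime precisely when its complement (a downset) is closed under $(\mathcal{C}_1,\mathcal{C}_2) \mapsto \mathcal{C}_1 \ast \mathcal{C}_2$, it suffices to show: if neither $\mathcal{C}_1$ nor $\mathcal{C}_2$ lies in the congruence permutable (respectively congruence modular) filter, then neither does $\mathcal{C}_1 \ast \mathcal{C}_2$.

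The next step is to encode ``$\mathcal{V}$ is not congruence permutable'' as the existence of a weak map out of $\mathcal{C} = \Clo(\mathcal{V})$. Congruence permutability is the strong Maltsev condition given by a single Maltsev term, and failing all of its instances is --- via the translation between h1 clone homomorphisms and compatibility with projections --- equivalent to the existence of an h1 clone homomorphism from $\mathcal{C}$ into a fixed ``obstruction'' clone $\mathbf{O}$ of projection-like (Sequeira-style coloring) operations. Congruence modularity is a genuine Maltsev condition rather than a strong one, so here one instead uses a family $\mathbf{O}_n$ ($n \in \mathbb{N}$), with the failure of congruence modularity amounting to the existence of an h1 clone homomorphism from $\mathcal{C}$ into $\mathbf{O}_n$ for every $n$.

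The heart of the proof is then to merge h1 clone homomorphisms $\xi_i \colon \mathcal{C}_i \to \mathbf{O}$ into an h1 clone homomorphism $\xi \colon \mathcal{C}_1 \ast \mathcal{C}_2 \to \mathbf{O}$. Because the $\xi_i$ need not respect genuine composition, this does not follow formally from the universal property of the coproduct: one must colour each position of a mixed term by an element of the base set of $\mathbf{O}$ so that at a $\mathcal{C}_i$-labelled node the colour is obtained by applying $\xi_i$ to the colours of the children, and then verify that this colouring is invariant under all rewriting rules defining $\mathcal{C}_1 \ast \mathcal{C}_2$ --- the identifications produced by $\mathcal{C}_i$-identities inside a single node (which go through because $\xi_i$ preserves height $1$ identities and variable identifications of $\mathcal{C}_i$) and, crucially, the identifications that propagate across a $\mathcal{C}_1/\mathcal{C}_2$ interface when a subterm on one side is rewritten underneath a node on the other side. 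Granting this, a merged h1 clone homomorphism out of $\mathcal{C}_1 \ast \mathcal{C}_2$ certifies $\mathcal{V}_1 \vee \mathcal{V}_2 \notin F$ by the previous step, and contraposition yields join primeness; for the congruence modular filter one runs the same argument with the family $\mathbf{O}_n$ in place of $\mathbf{O}$ (possibly with an additional compactness argument to control the Day length), which is where the extra difficulty lies.

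The main obstacle is exactly this merging-and-colouring step together with the right choice of obstruction clones. The congruence permutable instance is the content of unpublished work of Tschantz, whose combinatorial argument is already intricate; for congruence modularity the colouring has so far resisted being carried out uniformly enough to control the Day-term length, which is why that half of the conjecture remains open. The value of our framework is to isolate the whole problem as a single, purely clone-theoretic question: do an h1 clone homomorphism from $\mathcal{C}_1$ and one from $\mathcal{C}_2$ into the relevant obstruction clone combine into one from the coproduct $\mathcal{C}_1 \ast \mathcal{C}_2$?
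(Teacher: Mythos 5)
There is a fundamental mismatch here: the statement you are addressing is a \emph{conjecture} (due to Garcia and Taylor), and the paper does not prove it. The first item was confirmed only in unpublished, technically very difficult work of Tschantz, and the second remains open; what the paper actually establishes is the weaker Theorem~\ref{thm:perm-and-modular}, namely join-primeness-type statements restricted to joins of varieties \emph{defined by identities of height at most $1$}. Your text is accordingly not a proof but a proof strategy, and you concede this yourself: the ``heart of the proof'' --- merging h1 clone homomorphisms $\xi_1,\xi_2$ out of $\clone{C}_1,\clone{C}_2$ into an h1 clone homomorphism out of the coproduct clone of $\var V_1\vee\var V_2$ --- is exactly the open combinatorial problem (and, in the permutable case, exactly what Tschantz's intricate argument accomplishes). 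Since h1 clone homomorphisms do not respect composition, no universal property is available, and nothing in the paper (or in your sketch) supplies the colouring-invariance argument across the $\clone{C}_1/\clone{C}_2$ interface. So the central step is missing, not merely deferred.

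It is also worth noting how the paper's partial result sidesteps precisely this obstacle, because it clarifies why your route stalls: under the extra hypothesis that $\var V$ is defined by identities of height (at most) $1$, an h1 clone homomorphism $\varclo{\var V}\to\cloB$ can be upgraded to a genuine clone homomorphism (via Proposition~\ref{prop:abstract} and Corollary~\ref{cor:linear-birkhoff}), and genuine clone homomorphisms \emph{do} combine over the join; this is Corollary~\ref{cor:join}, applied with the fixed obstruction structures $(\{0,1\};\leq)$ for $n$-permutability (Proposition~\ref{prop:hm}) and the four-element structure $\relstr{D}$ for modularity (Proposition~\ref{prop:day}). Two further inaccuracies in your sketch: congruence modularity is captured by strong colorability by the single fixed structure $\relstr{D}$, not by a family of obstruction clones indexed by $n$ with a compactness argument; and the relevant notion throughout is \emph{strong} h1 clone homomorphisms (preserving projections), since the characterizations of these Maltsev conditions use strong colorings. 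Your closing observation --- that the whole conjecture reduces to whether h1 clone homomorphisms into the obstruction clone merge over coproducts --- is a fair description of the research programme the paper suggests, but it does not constitute a proof of the statement, and the paper does not claim one.
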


The first conjecture was confirmed by Tschantz in an unpublished paper~\cite{tschantz}. His proof is  technically extremely difficult and it seems impossible to generalize the arguments to even slightly more complex Maltsev conditions, such as the one characterizing $3$-permutability. 
In an effort to resolve the second conjecture and similar problems, Sequeira introduced the notion of compatibility with projections~\cite{sequeira01} and used it to prove some interesting partial results. We generalize his concept to ``colorability of clones by relational structures'' and provide a simple link to h1 clone homomorphisms:

\begin{theorem}\label{thm:coloring-and-h1}
Let $\cloA$ be a function clone and let $\cloB$ be the polymorphism clone of a relational structure $\relB$.  Then the following are equivalent.
\begin{itemize}
\item[(i)] There exists an h1 clone homomorphism from $\cloA$ into $\cloB$.
\item[(ii)] $\cloA$ is $\relB$-colorable.
\end{itemize}
\end{theorem}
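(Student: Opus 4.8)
The plan is to observe that conditions~(i) and~(ii) of Theorem~\ref{thm:coloring-and-h1} are merely two ways of packaging the same combinatorial object, so that the proof is a matter of unwinding both definitions — this is the ``simple link'' promised in the statement. I would start from the elementary reformulation of h1 clone homomorphisms: a map $\xi\colon\cloA\to\cloB$ is an h1 clone homomorphism if and only if it preserves arities and commutes with taking minors, where by a \emph{minor} of an $n$-ary $f\in\cloA$ I mean, for a map $\sigma\colon\{1,\dots,n\}\to\{1,\dots,m\}$, the $m$-ary operation $f_\sigma$ defined by $f_\sigma(x_1,\dots,x_m)=f(x_{\sigma(1)},\dots,x_{\sigma(n)})$ (so minors comprise permutations and identifications of variables together with the addition of dummy variables), and commutation with minors means $\xi(f_\sigma)=(\xi(f))_\sigma$. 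The point is that an arbitrary height-$1$ identity holding in $\cloA$, say $s(x_{\sigma(1)},\dots,x_{\sigma(n)})\approx t(x_{\tau(1)},\dots,x_{\tau(k)})$ with $s,t\in\cloA$, is by definition the equality $s_\sigma=t_\tau$ inside $\cloA$; applying $\xi$ and using commutation with minors turns it into $\xi(s)_\sigma=\xi(t)_\tau$, the corresponding identity in $\cloB$. Conversely each defining equation ``$f_\sigma=g$'' is itself a height-$1$ identity, so an h1 clone homomorphism necessarily commutes with minors (and, spelling out the identities involving dummy variables, preserves arities).

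Next I would spell out the definition of $\relB$-colorability into the same shape. Unwound, an $\relB$-coloring of $\cloA$ is a family $(\xi_n)_{n\geq 1}$ assigning to each $n$-ary operation of $\cloA$ an $n$-ary operation on $B$ — a colouring of $B^n$ by elements of $B$ — subject to (1) compatibility with minors in the sense above, and (2) for every basic relation $R$ of $\relB$, each operation $\xi_n(f)$ preserves $R$. Since $\cloB=\Pol(\relB)$ is, by definition, exactly the set of finitary operations on $B$ preserving every relation of $\relB$, condition~(2) says precisely that $\xi_n(f)\in\cloB$. Hence an $\relB$-coloring of $\cloA$ is the same datum as an arity-preserving, minor-commuting map $\cloA\to\cloB$, which by the previous paragraph is the same datum as an h1 clone homomorphism $\cloA\to\cloB$. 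Reading this chain of equivalences in either direction yields (i)$\Leftrightarrow$(ii); in particular no finiteness hypothesis on $\relB$ is needed, since $\Pol(\relB)$ is defined directly as a preservation class.

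The only real work — and the nearest thing to an obstacle — is the bookkeeping needed to see that the official definition of $\relB$-colorability, phrased in the style of Sequeira's compatibility with projections~\cite{sequeira01} (relation by relation, and as a property of ``colourings'' rather than of a single map between clones), matches the reformulation above verbatim: one must verify that imposing~(2) for every basic relation of $\relB$ amounts exactly to membership in $\Pol(\relB)$ — immediate, since being a polymorphism of $\relB$ is by definition nothing more than preserving each of its basic relations — and that the arity conventions of the two definitions agree, the colouring side being graded by arity from the outset and h1 clone homomorphisms being arity-preserving. Beyond this careful unpacking I expect no difficulty.
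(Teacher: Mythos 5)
Your high-level strategy -- that (i) and (ii) encode the same data -- is in the right spirit, but the proposal has a genuine gap: it never engages with the actual definition of $\relB$-colorability, and the entire content of the proof is precisely the translation you declare to be immediate bookkeeping. A coloring is \emph{not} a graded family $(\xi_n)$ assigning to each $n$-ary $f\in\cloA$ an $n$-ary operation on $B$; it is a \emph{single} map $c\colon F_{\cloA}(B)\to B$, where $F_{\cloA}(B)\subseteq A^{A^B}$ is the closure of the projections $\{\pi^B_b : b\in B\}$ under the action of $\cloA$ (the free algebra on $|B|$ generators; for finite $B$ of size $N$, the set of $N$-ary operations of $\cloA$), and the compatibility condition is stated via the lifted relations $R^{\cloA}$, not via preservation of $R$ by operations on $B$. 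Your "unwound" definition is essentially the conclusion of the theorem restated, so reading the chain of equivalences "in either direction" is circular as written.

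The two checks that actually carry the proof, and which your proposal omits, are these. First, to pass from an h1 clone homomorphism $\xi$ to a coloring one must set $c\bigl(f(\pi^B_{b_1},\dots,\pi^B_{b_n})\bigr):=\xi(f)(b_1,\dots,b_n)$ and verify this is well defined: an element of $F_{\cloA}(B)$ has many such representations, and $f(\pi^B_{b_1},\dots,\pi^B_{b_n})=g(\pi^B_{c_1},\dots,\pi^B_{c_m})$ is exactly a height~$1$ identity holding in $\cloA$, so well-definedness is precisely where the h1 hypothesis is used -- this is the crux, not a convention about arities. Second, one must unpack $R^{\cloA}$ as the closure of the generator tuples under the clone $\clone{F}_{\cloA}(B)$, so that a generic element has the form $\bigl(f(\pi^B_{b_{11}},\dots,\pi^B_{b_{n1}}),\dots,f(\pi^B_{b_{1k}},\dots,\pi^B_{b_{nk}})\bigr)$ with each column tuple in $R$; only then does the coloring condition become "each induced operation on $B$ preserves $R$", i.e.\ membership in $\cloB=\Pol(\relB)$. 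The checks you do flag (basic relations versus $\Pol$, arity conventions) are not where the work lies. For comparison, the paper's proof makes both points explicit by routing through the \transfer{} operator and Proposition~\ref{prop:abstract}: a coloring $c$ together with $b\mapsto\pi^B_b$ exhibits a \transfer{} of $\clone{F}_{\cloA}(B)$ inside $\cloB$, giving $\cloB\in\ETraP\cloA$ and hence an h1 clone homomorphism; conversely the map $h$ built in that proposition's proof (whose well-definedness rests on preservation of height~$1$ identities) is the desired coloring. A direct minor-theoretic argument along your lines would also work, but only once these two verifications are actually carried out.
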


As a corollary, we obtain the following generalization of~\cite{bentz.sequeira14} where it was additionally assumed that the varieties are idempotent.

\begin{theorem} \label{thm:perm-and-modular}
Let $\var V$ and $\var W$ be two varieties defined by identities of height at most~1. 
\begin{itemize}
  \item If $\var V$, $\var W$ are not congruence modular, then neither is $\var V \vee \var W$. 
  \item If $\var V$, $\var W$ are not congruence $n$-permutable for any $n$, then neither is $\var V \vee \var W$.
\end{itemize}
\end{theorem}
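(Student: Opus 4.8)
The plan is to derive both statements from Theorem~\ref{thm:coloring-and-h1}, using two standard facts about height-$1$ identities. The first is that congruence modularity and congruence $n$-permutability (for a fixed $n$) are \emph{linear} Maltsev conditions: the former is witnessed by a Day chain $m_0,\dots,m_k$, the latter by a Hagemann--Mitschke chain $p_0,\dots,p_n$, and in both cases all the defining identities have height at most~$1$. Consequently these conditions propagate along h1 clone homomorphisms, in the sense that if there is an h1 clone homomorphism from $\mathscr{F}$ into $\mathscr{G}$ and $\mathscr{F}$ satisfies one of them, then so does $\mathscr{G}$; equivalently, if $\mathscr{C}$ admits an h1 clone homomorphism into a function clone that is \emph{not} congruence modular (respectively, not congruence $n$-permutable for any~$n$), then $\mathscr{C}$ is not congruence modular (respectively, not congruence $n$-permutable for any~$n$) either. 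The second fact is that if a variety $\var V$ is axiomatized by a set $\Sigma_V$ of identities of height at most~$1$ over a signature $\sigma_V$, then any h1 clone homomorphism from the clone $\mathscr{F}_{\var V}$ of $\var V$ into a function clone $\mathscr{C}$ sends the basic operations $\sigma_V$ to operations of $\mathscr{C}$ that still satisfy $\Sigma_V$ --- simply because identities of height at most~$1$ are preserved.

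The heart of the matter is the following characterization, which I would obtain from Theorem~\ref{thm:coloring-and-h1} combined with a height-$1$ version of Sequeira's analysis of compatibility with projections~\cite{sequeira01}: there are relational structures $\relstr{D}_{\mathrm m}$ and $\relstr{D}_{\mathrm p}$ with $\Pol(\relstr{D}_{\mathrm m})$ not congruence modular and $\Pol(\relstr{D}_{\mathrm p})$ not congruence $n$-permutable for any~$n$ --- for the latter one may take a suitable partially ordered $\omega$-categorical structure, a non-trivial compatible order being precisely what obstructs a Hagemann--Mitschke chain --- such that a variety $\var V$ defined by identities of height at most~$1$ fails to be congruence modular iff $\var V$ is $\relstr{D}_{\mathrm m}$-colorable, and fails to be congruence $n$-permutable for any~$n$ iff $\var V$ is $\relstr{D}_{\mathrm p}$-colorable. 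The ``if'' directions are immediate from the first fact above. The ``only if'' directions are the substantive ones: starting from the mere failure of the Maltsev condition one must manufacture a genuine coloring, that is (via Theorem~\ref{thm:coloring-and-h1}) an h1 clone homomorphism from $\mathscr{F}_{\var V}$ into the \emph{fixed} clone $\Pol(\relstr{D}_{\mathrm m})$ or $\Pol(\relstr{D}_{\mathrm p})$. This is where Sequeira's constructions are invoked, with the hypothesis that $\var V$ is defined by height-$1$ identities now taking over the role that idempotency plays in~\cite{bentz.sequeira14}; I expect this to be the main obstacle.

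Granting the characterization, both statements of the theorem follow quickly. Let $\var V$, $\var W$ be defined by sets of identities of height at most~$1$, say $\Sigma_V$ over $\sigma_V$ and $\Sigma_W$ over $\sigma_W$ with $\sigma_V\cap\sigma_W=\emptyset$, and assume neither is congruence modular. Then $\var V\vee\var W$ is represented by the variety axiomatized by $\Sigma_V\cup\Sigma_W$ over $\sigma_V\cup\sigma_W$, which is again defined by identities of height at most~$1$. By the characterization there are h1 clone homomorphisms $\mathscr{F}_{\var V}\to\Pol(\relstr{D}_{\mathrm m})$ and $\mathscr{F}_{\var W}\to\Pol(\relstr{D}_{\mathrm m})$; by the second fact of the first paragraph their restrictions send the operations of $\sigma_V$, respectively $\sigma_W$, to polymorphisms of $\relstr{D}_{\mathrm m}$ satisfying $\Sigma_V$, respectively $\Sigma_W$. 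Taken together this is an assignment of the basic operations of $\var V\vee\var W$ to polymorphisms of $\relstr{D}_{\mathrm m}$ satisfying $\Sigma_V\cup\Sigma_W$, hence --- since $\mathscr{F}_{\var V\vee\var W}$ is freely generated by $\sigma_V\cup\sigma_W$ subject to $\Sigma_V\cup\Sigma_W$ --- extends to a clone homomorphism $\mathscr{F}_{\var V\vee\var W}\to\Pol(\relstr{D}_{\mathrm m})$, which is in particular an h1 clone homomorphism. Since $\Pol(\relstr{D}_{\mathrm m})$ is not congruence modular, the first fact of the first paragraph shows that $\mathscr{F}_{\var V\vee\var W}$, and thus $\var V\vee\var W$, is not congruence modular. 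The $n$-permutability statement is obtained in exactly the same way, using $\relstr{D}_{\mathrm p}$ and Hagemann--Mitschke chains in place of $\relstr{D}_{\mathrm m}$ and Day chains; here one additionally notes that ``congruence $n$-permutable for some~$n$'' propagates along h1 clone homomorphisms level by level and that $\Pol(\relstr{D}_{\mathrm p})$ simultaneously avoids all levels.
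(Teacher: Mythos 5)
Your overall architecture is the one the paper uses: characterize the failure of each Maltsev condition as colorability of $\varclo{\var V}$ by a fixed finite structure (the paper's Propositions~\ref{prop:hm} and~\ref{prop:day}, both going back to Sequeira), and then show that for varieties defined by identities of bounded height such colorings pass to the join by gluing the two homomorphisms into a single clone homomorphism on $\varclo{\var V\vee\var W}$ via freeness (the paper's Corollary~\ref{cor:join}). There is, however, a genuine gap: you work throughout with plain h1 clone homomorphisms and plain colorings, and both of your supporting ``facts'' are false for these. An h1 clone homomorphism preserves identities of height \emph{exactly} $1$; it need not preserve projections, hence need not preserve identities such as $m_0(x,y,z,u)\approx x$ or $p_1(x,y,y)\approx x$ --- which are precisely the identities of height at most $1$ but not of height $1$ occurring in the Day and Hagemann--Mitschke conditions and, by hypothesis, possibly in $\Sigma_V$ and $\Sigma_W$. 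Concretely, if $\Sigma_V$ contains $f(x,y)\approx x$, then $f=\pi^2_1$ in $\varclo{\var V}$, and an h1 clone homomorphism may send $\pi^2_1$ to a non-projection; so the images of $\sigma_V\cup\sigma_W$ need not satisfy $\Sigma_V\cup\Sigma_W$, and your freeness argument does not get off the ground. For the same reason your claimed equivalence ``not congruence modular iff $\relstr{D}_{\mathrm m}$-colorable'' fails in the direction you do not use: the trivial variety, defined by $x\approx y$, is congruence modular, yet its clone maps to $\Pol(\relstr{D}_{\mathrm m})$ by an h1 clone homomorphism (send every $n$-ary operation to an $n$-ary constant, which preserves the reflexive relations of $\relstr{D}_{\mathrm m}$).

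The repair is exactly the distinction the paper draws between h1 and \emph{strong} h1 clone homomorphisms, respectively between colorings and \emph{strong} colorings: strong h1 clone homomorphisms preserve projections and therefore all identities of height at most $1$. Replacing Theorem~\ref{thm:coloring-and-h1} by Proposition~\ref{prop:coloring-and-h1}(ii) and running your argument with strong colorings throughout recovers the paper's proof, which is Corollary~\ref{cor:join}(ii) combined with Propositions~\ref{prop:hm} and~\ref{prop:day}. Your deferral of the two characterizations themselves to Sequeira matches the paper, which proves the Hagemann--Mitschke case and cites~\cite{sequeira01} for the Day case; but note that those characterizations hold for all varieties and must be stated with \emph{strong} colorability, not only for height-at-most-$1$-defined varieties with plain colorability as in your formulation.
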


%Our results shed some light

\subsection{Outline of the article} Section~\ref{sect:prelims} contains the
definitions of the standard notions we used in the introduction. In
Section~\ref{sec:rel} we discuss relational constructions, in particular
pp-powers. Then follow algebraic constructions, notably \transfers{} and the
operator $\Tra$\footnote{Often denoted by $\mathsf D$, for \emph{double shrink}.}, in Section~\ref{sec:tra}. Their syntactic counterpart, h1 clone
homomorphisms, is dealt with in Section~\ref{sec:birk}.  The
$\omega$-categorical case is discussed together with topological considerations
in Section~\ref{sec:cont}. Section~\ref{sec:snek} is devoted to the connection
between colorings and h1 clone homomorphisms.  We conclude this work with
Section~\ref{sec:wrapup} where we prove and discuss the results and conjectures
from the introduction.

\section{Preliminaries}\label{sect:prelims}

We explain the classical notions which appeared in the introduction, and fix some notation for the rest of the article.  The new notion of pp-power, the operator $\Tra$ acting on function clones,  h1 clone homomorphisms, and colorings of clones by relational structures will be defined in their own sections. 
For undefined universal algebraic concepts and more detailed presentations of the notions presented here we  refer to~\cite{BS81,Berg11}. For notions from model theory we refer to~\cite{Hodges}.

\subsection{Relational structures and polymorphism clones} We denote relational structures by $\relA, \relB$, etc. When $\relA$ is a relational structure, we reserve the symbol $A$ for its domain. We write $\cloA$ for its \emph{polymorphism clone}, i.e., the set of all finitary operations on $A$ which preserve all relations of $\relA$, usually denoted by $\Pol(\relA)$ in the literature. The polymorphism clone $\cloA$ is always a \emph{function clone}, i.e., it is closed under composition and contains all projections.

\subsection{CSPs} For a finite relational signature $\Sigma$ and a $\Sigma$-structure $\relstr{A}$, the \emph{constraint satisfaction problem} of $\relstr{A}$, or $\CSP(\relstr{A})$ for short, is the membership problem for the class
\[
\{ \relstr{C}: \relstr{C} \mbox{ is a finite $\Sigma$-structure and there exists a~homomorphism $\relstr{C} \to \relstr{A}$}\} \enspace.
\]
An alternative definition of $\CSP(\relstr{A})$ is via primitive positive (pp-) sentences. Recall that a \emph{pp-formula} over $\relstr{A}$ is a first order formula which only uses predicates from $\relstr{A}$, conjunction, equality, and existential quantification. $\CSP(\relstr{A})$ can equivalently be phrased as the membership problem of the set of  pp-sentences which are true in $\relstr{A}$. 

Our results, in particular Theorems~\ref{thm:new_finite} and~\ref{thm:new_infinite}, are purely structural and complexity-free. Still it is worthwhile mentioning that when we say that a computational problem reduces to, or is not harder than, another computational problem, we have log-space reductions in mind (although the claim is true also for other meanings of hardness).

\subsection{Notions from the infinite}\label{sect:notions_infinite} The algebraic theory of the CSP is best developed for finite structures and countable $\omega$-categorical structures. 
Recall that an at most countable relational structure $\relstr{A}$ is \emph{$\omega$-categorical} if, for every $n \geq 1$, the natural componentwise action of its automorphism group $\Aut(\relstr{A})$ on $A^n$ has only finitely many orbits. In particular finite structures are always $\omega$-categorical.

The class of infinite $\omega$-categorical structures which has probably received most attention in the literature are reducts of finitely bounded homogeneous structures, which appear in Conjectures~\ref{conj:old} and~\ref{conj:new}. Since we do not need these notions in the present paper, we refrain from defining them and refer to the surveys~\cite{BP-reductsRamsey},~\cite{Bodirsky-HDR}, and~\cite{Pin15}.

\subsection{pp definitions and interpretations} A relation is \emph{pp-definable} in a  relational structure $\relstr{A}$ if it is definable with a pp-formula over $\relstr{A}$ without parameters. Let $\relstr{A}$, $\relstr{B}$ be relational structures with possibly different signatures. We say that $\relstr{A}$ \emph{pp-interprets} $\relstr{B}$, or that $\relstr{B}$ is \emph{pp-interpretable} in $\relstr{A}$,  if there exists $n \geq 1$ and a mapping $f$ from a subset of $A^n$ onto $B$ such that the following relations are pp-definable in $\relstr{A}$:
\begin{itemize}
\item the domain of $f$;
\item the preimage of the equality relation on $B$ under $f$, viewed as a $2n$-ary relation on $A$;
\item the preimage of every relation in $\relstr{B}$ under $f$, where the preimage of a $k$-ary relation under $f$ is again regarded as a $kn$-ary relation on $A$.
\end{itemize}

\subsection{Homomorphic equivalence and cores}

When relational structures $\relstr{A}$ and $\relstr{B}$ have the same signature, then we say that $\relstr{A}$ and $\relstr{B}$ are \emph{homomorphically equivalent} if there exists a~homomorphism $\relstr{A} \to \relstr{B}$ and a~homomorphism $\relstr{B} \to \relstr{A}$. A relational structure $\relstr{B}$ is called a \emph{model-complete core} if the automorphisms of~$\relstr{B}$ are dense in its endomorphisms, i.e., for every endomorphism $e$ of $\relstr{B}$ and every finite subset $B'$ of $B$ there exists an automorphism of~$\relstr{B}$ which agrees with $e$ on $B'$. When $\relstr{B}$ is finite, then this means that every endomorphism is an automorphism, and $\relstr{B}$ is simply called a \emph{core}. Every at most countable $\omega$-categorical structure is homomorphically equivalent to a unique model-complete core, which is again $\omega$-categorical~\cite{Cores-journal, JBK}. A special case of a core is an  \emph{idempotent core}, by which we mean a relational structure whose only endomorphism is the identity function on its domain. By adding all unary singleton relations to a finite core (recall reduction~({c}) from the introduction) one obtains an idempotent core. Note that on the other hand, a countably infinite $\omega$-categorical structure is never idempotent since its automorphism group is \emph{oligomorphic}, i.e., large in a certain sense; recall Section~\ref{sect:notions_infinite}.

\subsection{HSP and E}
When $\cloA$ is a function clone, then we denote by $\HHH(\cloA)$ all function clones obtained by letting $\cloA$ act naturally on the classes of an invariant equivalence relation on its domain. By $\SSS(\cloA)$ we denote all function clones obtained by letting $\cloA$ act on an an invariant subset of its domain via restriction. We write $\PPP(\cloA)$ and $\PPPfin(\cloA)$ for all componentwise actions of $\cloA$ on powers and finite powers of its domain, respectively. The operator $\EEE(\cloA)$ yields all function clones obtained from $\cloA$ by adding functions to it. All these operators are to be understood up to renaming of elements of domains, i.e., we consider two function clones equal if one can obtained from the other via a bijection between their respective domains. We use combinations of these operators, such as $\HSP(\cloA)$, with their obvious meaning.

We denote algebras by $\alg{A}$, $\alg{B}$, etc., and their domains by $A$, $B$, etc. We apply the operators $\HHH$, $\SSS$, $\PPP$, and $\PPPfin$  also to algebras and to classes of algebras of the same signature as it is standard in the literature. When we apply $\PPP$ to a class of algebras, then we refer to all products of algebras in the class (rather than powers only). 
Concerning the operator $\EEE$, we shall also apply it to algebras and sets of
functions which are not necessarily function clones, and mean that it returns
all algebras whose operations contain all operations the original algebra (all
sets of functions that contain the original set of functions).

\subsection{Clone homomorphisms} A \emph{clone homomorphism} from a function clone $\cloA$ to a function clone $\cloB$ is a mapping $\xi\colon \cloA\To\cloB$ which
\begin{itemize}
\item preserves arities, i.e., it sends every function in $\cloA$ to a function of the same arity in~$\cloB$;
\item preserves each projection, i.e., it sends the $k$-ary projection onto the $i$-th coordinate in $\cloA$ to the same projection in $\cloB$, for all $1\leq i\leq k$;
\item preserves composition\footnote{Also see the comment below Definition
\ref{def:5.2}.}, i.e., $\xi(f(g_1,\ldots,g_n))=\xi(f)(\xi(g_1),\ldots,\xi(g_n))$ for all $n$-ary functions $f$ and all $m$-ary functions $g_1,\ldots,g_n$ in $\cloA$.
\end{itemize}
For all $1\leq i\leq k$ we denote the $k$-ary projection onto the $i$-th coordinate by $\pi^k_i$, in any function clone and irrespectively of the domain of that clone. This slight abuse of notation allows us, for example, to express the second item above by writing $\xi(\pi^k_i)=\pi^k_i$.

\subsection{The lattice of interpretability types of varieties}
A \emph{variety} is a class of algebras of the same signature closed under homomorphic images, subalgebras, and products. 
By Birkhoff's HSP theorem~\cite{Bir-On-the-structure}, a class of algebras of the same signature is a variety if and only if it is the class of models of some set of identities (where an \emph{identity} is a universally quantified equation). Each variety $\var V$ has a \emph{generator}, that is, an algebra $\alg A \in \var V$ such that $\var V = \HSP \algA$. We denote by $\varclo V$ the clone of term operations of any generator of $\var V$ (the choice of generator is immaterial for our purposes). 

We quasi-order the class of all varieties (of varying signatures) by defining $\var V \leq \var W$ if there exists a clone homomorphism $\varclo{\var V} \to \varclo{\var W}$. Then we identify two varieties $\var V$, $\var W$ if $\var V \leq \var W \leq \var V$. The obtained partially ordered class is a lattice -- the \emph{lattice of interpretability types of varieties}. The lattice join can be described by means of the defining identities: the signature of $\var V \vee \var W$ is the disjoint union of the signatures of $\var V$ and $\var W$ and the set of defining identities of $\var V \vee \var W$ is the union of the defining identities of $\var V$ and $\var W$. 

\subsection{Topology} Every function clone is naturally equipped with the topology of pointwise convergence: in this topology, a sequence $(f_i)_{i\in\omega}$ of $n$-ary functions converges to an $n$-ary function $f$ on the same domain if and only if for all $n$-tuples $\bar a$ of the domain the functions $f_i$ agree with $f$ on $\bar a$ for all but finitely many $i\in\omega$. Therefore, every function clone gives rise to an abstract topological clone~\cite{Reconstruction}. %On an at most countable domain, this topology on function clones is Polish, i.e., separable and completely metrizable. 

We imagine function clones always as carrying this topology, which is, in the case of a~countable domain, in fact induced by a~metric, and in general by a~uniformity~\cite{Reconstruction}. Then a~mapping $\xi\colon \cloA\To\cloB$, where $\cloA$ and $\cloB$ are function clones, is continuous if and only if for all $f\in\cloA$ and all finite sets $B'\subseteq B$ there exists a~finite set $A'\subseteq A$ such that for all $g\in\cloA$ of the same arity as $f$, if $g$ agrees with $f$ on $A'$, then $\xi(g)$ agrees with $\xi(f)$ on $B'$. It is uniformly continuous if and only if for all finite sets $B'\subseteq B$ there exists a~finite set $A'\subseteq A$ such that whenever two functions $f, g\in\cloA$ of the same arity agree on $A'$, then their images $\xi(f), \xi(g)$ under $\xi$ agree on~$B'$.

We remark that the polymorphism clones of relational structures are precisely the  function clones which are complete with respect to this topology. Function clones on a finite domain are discrete.

\section{Relational Constructions} \label{sec:rel}

We first recall the general CSP reductions mentioned in the introduction, there labelled (a), (b), and~({c}), and then introduce a~weaker variant of a pp-interpretation which we call \emph{pp-power}.

\subsection{Classical reductions} The first reduction~(a) from the introduction is the one via primitive positive interpretations, justified by the following proposition~\cite{JBK}.

\begin{proposition} Let $\relstr{A}$, $\relstr{B}$ are relational structures with finite signatures. If $\relstr{A}$ pp-interprets $\relstr{B}$, then $\CSP(\relstr{B})$ is log-space reducible to $\CSP(\relstr{A})$. 
\end{proposition}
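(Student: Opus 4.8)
The plan is to give the standard syntactic translation of primitive positive sentences, as in~\cite{JBK}. Fix the data witnessing that $\relA$ pp-interprets $\relB$: an integer $n\geq 1$, a partial surjection $f\colon \dom(f)\to B$ with $\dom(f)\subseteq A^n$, together with a pp-formula $\delta(x_1,\dots,x_n)$ defining $\dom(f)$ over $\relA$, a pp-formula $\varepsilon(x_1,\dots,x_n,y_1,\dots,y_n)$ defining the preimage under $f$ of the equality relation on $B$, and for each relation symbol $R$ of $\relB$, say of arity $k$, a pp-formula $\theta_R$ in $kn$ variables defining the preimage of $R^{\relB}$ under $f$. Since the signature of $\relB$ is finite, this is a finite, fixed collection of pp-formulas. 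Given a pp-sentence $\phi$ over $\relB$, I would produce a pp-sentence $\phi^*$ over $\relA$ as follows: replace each variable $x$ of $\phi$ by a block $\bar x=(x^1,\dots,x^n)$ of $n$ fresh variables; replace each quantifier $\exists x$ by $\exists x^1\cdots\exists x^n$ and conjoin the relativizing atom $\delta(\bar x)$ to the matrix for each quantified block; replace each atom $R(x_{i_1},\dots,x_{i_k})$ by $\theta_R(\bar x_{i_1},\dots,\bar x_{i_k})$; and replace each equality atom $x=y$ by $\varepsilon(\bar x,\bar y)$. After renaming the bound variables of the inserted copies of $\delta$, $\varepsilon$, $\theta_R$ apart, $\phi^*$ is again a pp-sentence.

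The correctness claim to be verified is that $\relB\models\phi$ if and only if $\relA\models\phi^*$, which follows by a routine induction on the structure of $\phi$. For the forward direction, take a satisfying assignment of the variables of $\phi$ into $B$; using surjectivity of $f$, lift each value to a tuple in $\dom(f)$, and then use the defining properties of $\delta$, $\varepsilon$, and $\theta_R$ (witnessing also the internal existential quantifiers of these formulas) to obtain a satisfying assignment of $\phi^*$ over $\relA$. For the converse, any satisfying assignment of $\phi^*$ over $\relA$ sends each block $\bar x$ into $\dom(f)$ since $\delta(\bar x)$ holds, so pushing forward along $f$ yields a candidate assignment into $B$; the atoms $\theta_R$ then force the relations of $\relB$ to hold, and the atoms $\varepsilon(\bar x,\bar y)$ force $f$ of the two blocks attached to an equality atom $x=y$ to coincide, so the pushed-forward assignment is well defined and satisfies $\phi$.

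Finally, I would check that $\phi\mapsto\phi^*$ is log-space computable. Because $\delta$, $\varepsilon$, and the $\theta_R$ are fixed pp-formulas independent of $\phi$, the substitution is a local replacement that enlarges $\phi$ by at most a constant factor; the only global bookkeeping is expanding each variable into its block $\bar x$ and renaming the inserted bound variables apart, which can be done while maintaining a constant number of counters, hence in $O(\log|\phi|)$ work space. The one genuinely delicate point I expect is the treatment of equality atoms in the correctness argument: this is exactly where one uses that the preimage of the equality relation on $B$ (not merely equality on $A$) is assumed pp-definable, and it is worth spelling out that, although $\varepsilon$ need not define genuine equality, the atoms $\delta(\bar x)$, $\delta(\bar y)$, $\varepsilon(\bar x,\bar y)$ together do force $f(\bar x)=f(\bar y)$ on any satisfying assignment, which is what makes the pushed-forward assignment well defined. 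Everything else is bookkeeping.
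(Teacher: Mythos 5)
Your proof is correct and is exactly the standard syntactic translation that the paper invokes by citing~\cite{JBK} without reproducing the argument; the handling of the domain formula, the preimage-of-equality formula for equality atoms, and the constant-factor, local nature of the substitution (giving the log-space bound) are all as in that reference. Nothing to add.
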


The second reduction,~(b) in the introduction, is homomorphic equivalence. We have the following observation, which is easily verified using the fact that  homomorphisms preserve pp-formulas.

\begin{proposition}
Let $\relstr{A}$, $\relstr{B}$ be relational structures in the same finite signature which are homomorphically equivalent. Then $\CSP(\relstr{A})=\CSP(\relstr{B})$.
\end{proposition}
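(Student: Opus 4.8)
The statement to prove is: if $\relstr{A}$ and $\relstr{B}$ are relational structures in the same finite signature which are homomorphically equivalent, then $\CSP(\relstr{A}) = \CSP(\relstr{B})$. The plan is to argue directly from the homomorphism-based definition of the CSP, using the fact that homomorphisms compose. Fix homomorphisms $h\colon \relstr{A}\to\relstr{B}$ and $h'\colon\relstr{B}\to\relstr{A}$, which exist by the assumed homomorphic equivalence.

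First I would unfold the definition: $\CSP(\relstr{A})$ is the class of finite structures $\relstr{C}$ (in the common signature) admitting a homomorphism $\relstr{C}\to\relstr{A}$, and likewise for $\relstr{B}$. To show the two classes coincide it suffices to prove the two inclusions. For the inclusion $\CSP(\relstr{A})\subseteq\CSP(\relstr{B})$, take any $\relstr{C}$ with a homomorphism $g\colon\relstr{C}\to\relstr{A}$; then $h\circ g\colon\relstr{C}\to\relstr{B}$ is a homomorphism, since the composition of homomorphisms is a homomorphism, so $\relstr{C}\in\CSP(\relstr{B})$. The reverse inclusion is symmetric, using $h'$ in place of $h$: any homomorphism $g'\colon\relstr{C}\to\relstr{B}$ yields $h'\circ g'\colon\relstr{C}\to\relstr{A}$. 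This gives $\CSP(\relstr{A})=\CSP(\relstr{B})$.

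Alternatively, one can phrase the same argument via the pp-sentence characterization of the CSP recalled in Section~\ref{sect:prelims}: a pp-sentence $\varphi$ is true in $\relstr{A}$ iff it is true in $\relstr{B}$. Here one uses that homomorphisms preserve pp-formulas (indeed pp-formulas are exactly those preserved under homomorphisms in the relevant sense), so truth of $\varphi$ in $\relstr{A}$ transfers along $h$ to truth in $\relstr{B}$, and along $h'$ back; hence the two sets of true pp-sentences agree, i.e.\ $\CSP(\relstr{A})=\CSP(\relstr{B})$. This is the ``fact'' alluded to in the statement of the proposition.

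There is essentially no obstacle here: the only thing to be slightly careful about is that both structures share the same finite signature (so that ``the same'' pp-sentences and the same class of input structures $\relstr{C}$ make sense on both sides), and that the CSP is defined with finite input structures $\relstr{C}$, which is harmless since composition of homomorphisms does not change the domain of $\relstr{C}$. The proof is therefore a one-line consequence of composability of homomorphisms; the content of the proposition is conceptual (it isolates reduction~(b)) rather than technical.
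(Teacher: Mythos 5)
Your proof is correct and matches the paper's (unstated but alluded-to) argument: the paper justifies the proposition in one line via the fact that homomorphisms preserve pp-formulas, which is exactly your alternative phrasing, and your primary composition-of-homomorphisms argument is the equivalent statement for the homomorphism-based definition of the CSP. Nothing is missing.
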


Let us observe here that  for the CSPs of $\relstr{A}$ and $\relstr{B}$ to be equal, we only have to require that every finite substructure of $\relstr{A}$ maps homomorphically into $\relstr{B}$ and vice-versa. This gives us a more general reduction in general; however, in the  countable $\omega$-categorical case this condition of ``local'' homomorphic equivalence is easily seen to be equivalent to full homomorphic equivalence via a standard compactness argument, and so we shall not further consider this notion in the present paper.

%Every at most countable $\omega$-categorical structure is homomorphically equivalent to an up to isomorphism unique \emph{model-complete core}, that is a structure $\relstr{B}$ whose set of endomorphisms is dense in the set of automorphisms~\cite{Cores-journal, JBK}. 

The following proposition from~\cite{JBK} states that an $\omega$-categorical model-complete core can be expanded by a singleton relation without making the CSP harder, giving us reduction~({c}) from the introduction. Although stated in~\cite{JBK} for finite structures, it holds also in the $\omega$-categorical case (cf.~\cite{Bodirsky-HDR}).

\begin{proposition}
Let $\relstr{A}$ be an at most countable $\omega$-categorical structure which is a model-complete core and let $\relstr{B}$ be a structure obtained from $\relstr{A}$ by adding a singleton unary relation. The $\CSP(\relstr{A})$ and $\CSP(\relstr{B})$ are log-space equivalent. 
\end{proposition}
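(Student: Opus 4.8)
The plan is to establish the two log-space reductions separately. The reduction of $\CSP(\relstr A)$ to $\CSP(\relstr B)$ is immediate: given an instance $\relstr C$ of $\CSP(\relstr A)$, I would output the structure in the signature of $\relstr B$ that coincides with $\relstr C$ on all relations of $\relstr A$ and interprets the new unary symbol as $\emptyset$; a homomorphism of this structure into $\relstr B$ is literally the same thing as a homomorphism $\relstr C\to\relstr A$, and the transformation uses logarithmic space. So the content lies in reducing $\CSP(\relstr B)$ to $\CSP(\relstr A)$, where $\relstr B=(\relstr A,\{a\})$, and the difficulty is to encode the constraint ``$c\mapsto a$'' into a pp-constraint over $\relstr A$ alone.

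The heart of the argument is the claim that the orbit $O:=\{\alpha(a):\alpha\in\Aut(\relstr A)\}$ is pp-definable in $\relstr A$. To prove this I would first check that $O$ is invariant under every polymorphism $t\in\cloA$: for automorphisms $\alpha_1,\dots,\alpha_m$ of $\relstr A$, the unary map $e\colon x\mapsto t(\alpha_1(x),\dots,\alpha_m(x))$ is an endomorphism of $\relstr A$ — applying $t$ componentwise to the tuples $(\alpha_j(x_1),\dots,\alpha_j(x_k))$, each of which lies in a given relation of $\relstr A$ whenever $(x_1,\dots,x_k)$ does, yields a tuple in that relation — and since $\relstr A$ is a model-complete core there is an automorphism agreeing with $e$ on the one-element set $\{a\}$, so $e(a)\in O$; this says exactly that $t$ maps the $m$-tuple $(\alpha_1(a),\dots,\alpha_m(a))\in O^m$ back into $O$. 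Being an orbit of an $\omega$-categorical structure, $O$ is first-order definable, and being preserved by all polymorphisms it is then pp-definable in $\relstr A$ by the $\omega$-categorical Galois correspondence between pp-definable relations and polymorphism clones; fix a pp-formula $\psi(x)$ defining $O$.

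With $\psi$ in hand the reduction is routine. Given an instance $\relstr C$ of $\CSP(\relstr B)$ with the new unary symbol interpreted as $S$: if $S=\emptyset$ output the $\relstr A$-reduct of $\relstr C$; otherwise pass first to the quotient $\relstr C_1$ obtained by identifying all of $S$ to a single vertex $c$ (legitimate because every homomorphism into $\relstr B$ collapses $S$ to $a$, so $\relstr C\to\relstr B$ iff $\relstr C_1\to\relstr B$), and then form $\relstr C_2$ from the $\relstr A$-reduct of $\relstr C_1$ by appending the canonical gadget of $\psi(x)$ with its free variable identified with $c$. A homomorphism $\relstr C_2\to\relstr A$ is precisely a homomorphism $h$ of the $\relstr A$-reduct of $\relstr C_1$ with $h(c)\in O$, and composing such an $h$ with an automorphism carrying $h(c)$ to $a$ produces a homomorphism $\relstr C_1\to\relstr B$ (and conversely every homomorphism $\relstr C_1\to\relstr B$ is already of this form). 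Hence $\relstr C\to\relstr B$ iff $\relstr C_2\to\relstr A$, and since $\relstr A$ — and therefore $\psi$ and the gadget — is fixed, $\relstr C\mapsto\relstr C_2$ merely relabels vertices and appends a bounded piece, so it is computable in logarithmic space.

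The main obstacle is the pp-definability of $O$, and within it the only nontrivial step is the endomorphism argument, which is exactly where the model-complete core hypothesis is used: without it an endomorphism of $\relstr A$ could move $a$ to a different orbit, $O$ would fail to be polymorphism-invariant, and there would be no pp-formula ``naming'' $a$ — this is precisely why the statement fails for general $\omega$-categorical $\relstr A$. The remaining ingredients (the $\omega$-categorical Galois correspondence, the quotienting step, and the log-space bookkeeping) are standard.
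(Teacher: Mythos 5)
Your proof is correct. Note that the paper does not actually prove this proposition itself --- it is quoted from the literature --- but the closest internal analogue is Lemma~\ref{lem:adding_const}, and the comparison is instructive. Both arguments hinge on the same key fact, established the same way: the orbit $O$ of the distinguished point $a$ under $\Aut(\relstr{A})$ is preserved by all polymorphisms because $\relstr{A}$ is a model-complete core (you prove the invariance directly via the endomorphism $x\mapsto t(\alpha_1(x),\dots,\alpha_m(x))$, where the paper cites the literature for this step), and is therefore pp-definable by $\omega$-categoricity and the Bodirsky--Ne\v{s}et\v{r}il correspondence. Where you diverge is in how this fact is converted into a complexity reduction: the paper builds a structural construction --- a pp-power of $\relstr{A}$ on domain $A^2$ with relations $\overline{R}$ and $\overline{S}$, shown homomorphically equivalent to $\relstr{B}$ --- and then invokes the general machinery that pp-constructions yield log-space reductions; you instead work directly at the instance level, quotienting the $S$-marked elements to a single vertex and attaching the canonical gadget of the pp-formula $\psi(x)$ defining $O$. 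Your route is more elementary and self-contained for this one statement (and makes the log-space bound transparent), while the paper's construction is what feeds into its general pp-constructibility framework (Corollary~\ref{cor:relational} and Theorem~\ref{thm:new_infinite}). The only points worth making explicit in a polished write-up are that the quotient map and the handling of equality atoms in the gadget are the standard ones, and that the automorphism carrying $h(c)$ to $a$ exists precisely because $h(c)\in O$; both are routine.
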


The following definition assembles all three reductions.

\begin{definition}\label{defn:ppconstructible}
Let $\relstr{A}$, $\relstr{B}$ be relational structures. We say that $\relstr{B}$ can be \emph{pp-constructed} from $\relstr{A}$
if there exists a sequence $\relstr{A} = \relstr{C}_1, \relstr{C}_2, \dots, \relstr{C}_k = \relstr{B}$ such that for every $1 \leq i < k$
\begin{itemize}
  \item $\relstr{C}_i$ pp-interprets $\relstr{C}_{i+1}$, or
	\item $\relstr{C}_{i+1}$ is homomorphically equivalent to $\relstr{C}_i$, or
	\item $\relstr{C}_i$ is an at most countable $\omega$-categorical model-complete core, and $\relstr{C}_{i+1}$ is obtained from $\relstr{C}_i$ by adding a singleton unary relation.
\end{itemize}
\end{definition}

\begin{corollary}\label{cor:complexity}
Let $\relstr{A}$, $\relstr{B}$ be relational structures. If $\relstr{B}$ can be pp-constructed from $\relstr{A}$, then $\CSP(\relstr{B})$ is log-space reducible to $\CSP(\relstr{A})$.
\end{corollary}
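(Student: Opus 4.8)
The plan is to prove the statement by a straightforward induction on the length $k$ of a sequence $\relstr{A}=\relstr{C}_1,\dots,\relstr{C}_k=\relstr{B}$ witnessing that $\relstr{B}$ is pp-constructible from $\relstr{A}$, using the three Propositions established earlier in this section together with the fact that log-space reductions are closed under composition.

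The base case $k=1$ is trivial. For the inductive step I would assume that $\CSP(\relstr{C}_{k-1})$ is log-space reducible to $\CSP(\relstr{A})$ and distinguish the three clauses of Definition~\ref{defn:ppconstructible} applied to the pair $(\relstr{C}_{k-1},\relstr{C}_{k})$. If $\relstr{C}_{k-1}$ pp-interprets $\relstr{C}_{k}$, the Proposition on pp-interpretations gives a log-space reduction from $\CSP(\relstr{C}_{k})$ to $\CSP(\relstr{C}_{k-1})$. If $\relstr{C}_{k}$ is homomorphically equivalent to $\relstr{C}_{k-1}$, the Proposition on homomorphic equivalence gives $\CSP(\relstr{C}_{k})=\CSP(\relstr{C}_{k-1})$. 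If $\relstr{C}_{k-1}$ is an at most countable $\omega$-categorical model-complete core and $\relstr{C}_{k}$ arises from it by adding a singleton unary relation, the corresponding Proposition gives that the two CSPs are log-space equivalent, in particular $\CSP(\relstr{C}_{k})$ reduces to $\CSP(\relstr{C}_{k-1})$. In each case I would then compose with the reduction from $\CSP(\relstr{C}_{k-1})$ to $\CSP(\relstr{A})$ supplied by the induction hypothesis.

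The only point requiring genuine care --- and hence the ``main obstacle'', though it is entirely classical --- is that the composition of two log-space reductions is again a log-space reduction: one cannot afford to store the polynomially long intermediate instance, but one can recompute each of its bits on demand within logarithmic working space. I would also flag a harmless technical point: $\CSP(\relstr{C})$ is only defined for finite signatures, so one should either add to the hypothesis that each $\relstr{C}_i$ has a finite signature or observe that none of the three constructions forces an infinite signature when the two endpoints are finite. Finally, the same argument works verbatim for any notion of reduction that is transitive and under which the three basic constructions are reductions, which is why the statement is insensitive to the precise meaning of ``not harder than'', as already remarked in the preliminaries.
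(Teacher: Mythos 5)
Your proof is correct and is exactly the argument the paper intends: the corollary is stated as an immediate consequence of the three preceding propositions, chained along the sequence from Definition~\ref{defn:ppconstructible} and composed using transitivity of log-space reductions. Your remarks on composing log-space reductions and on finite signatures are sensible but standard, and the paper leaves them implicit.
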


\subsection{pp-powers}

We now introduce a weakening of pp-interpretations which together with homomorphic equivalence will cover all classical reductions.
This weakening is obtained by requiring that the partial surjective mapping $f$ from the definition of pp-interpretation in Section~\ref{sect:prelims} is a bijection with full domain. 

\begin{definition}
Let $\relstr{A}, \relstr{B}$ be relational structures. 
%We say that $\relstr{B}$ is \emph{pp-definable} over $\relstr{A}$ if $A=B$ and all relations of $\relstr{B}$ are definable in $\relstr{A}$ with a primitive positive formula without parameters. 
We say that $\relstr{B}$ is a \emph{pp-power} of $\relstr{A}$ if it is isomorphic to a structure with domain $A^n$, where $n\geq 1$, whose relations are pp-definable from $\relstr{A}$; as before, a $k$-ary relation on $A^n$ is regarded as a $kn$-ary relation on $A$. 
\end{definition}

The next lemma deals with reductions (a) and (b) and their combinations. We need an auxiliary definition.

\begin{definition}
For a class $\class{K}$ of relational structures, we denote by
\begin{itemize}
\item $\PpInt \class{K}$ the class of structures which are pp-interpretable in some structure in $\class{K}$;
\item $\PpPower \class{K}$ the class of pp-powers of structures in $\class{K}$;
\item $\HomEq \class{K}$ the class of structures which are homomorphically equivalent to a member of $\class{K}$.
\end{itemize}
\end{definition}

\begin{lemma} \label{lem:pppp}
Let $\class{K}$ be a class of relational structures. Then
\begin{enumerate}
\item[(i)] $\PpInt \class{K} \subseteq \HomEq \PpPower \class{K}$;
\item[(ii)] $\HomEq \HomEq \class{K} = \HomEq \class{K}$;
\item[(iii)] $\PpPower \PpPower \class{K} = \PpPower \class{K}$;
\item[(iv)] $\PpPower \HomEq \class{K} \subseteq \HomEq \PpPower \class{K}$.
\end{enumerate}
\end{lemma}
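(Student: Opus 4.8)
The plan is to verify each of the four containments essentially by unwinding the definitions of pp-interpretation, pp-power, and homomorphic equivalence, chasing the partial surjections and homomorphisms through compositions. I would treat the four items separately, as they are of rather different flavors, and I expect item (i) to be the only one requiring a genuine idea; (ii)--(iv) are bookkeeping.

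For item (ii), a homomorphism $\relstr{A} \to \relstr{B}$ composed with a homomorphism $\relstr{B} \to \relstr{C}$ is a homomorphism $\relstr{A} \to \relstr{C}$ (and likewise in the other direction), so homomorphic equivalence is transitive; reflexivity gives $\HomEq \class{K} \subseteq \HomEq\HomEq\class{K}$, hence equality. For item (iii), given a pp-power $\relstr{C}$ of $\relstr{B}$ on domain $B^m$ and a pp-power $\relstr{B}$ of $\relstr{A}$ on domain $A^n$, identify $(A^n)^m$ with $A^{nm}$; the point is that a relation pp-definable over $\relstr{B}$, when each $\relstr{B}$-coordinate is substituted by its pp-definition over $\relstr{A}$, becomes pp-definable over $\relstr{A}$ (pp-formulas are closed under substituting pp-definable relations for atomic ones, and under conjunction and existential quantification). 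This needs a short lemma that ``pp-definitions compose'', which I would state and prove once and reuse. Reflexivity ($\relstr{A}$ is trivially a pp-power of itself, $n=1$) gives the reverse inclusion, so we get equality.

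For item (iv), suppose $\relstr{C}$ is a pp-power of $\relstr{B}'$ on domain $(B')^n$, where $\relstr{B}'$ is homomorphically equivalent to some $\relstr{B} \in \class{K}$ via homomorphisms $h\colon \relstr{B}' \to \relstr{B}$ and $h'\colon \relstr{B} \to \relstr{B}'$. Let $\relstr{D}$ be the pp-power of $\relstr{B}$ on domain $B^n$ using the syntactically same pp-formulas that define the relations of $\relstr{C}$ over $\relstr{B}'$. Then the componentwise maps $h^n\colon (B')^n \to B^n$ and $(h')^n\colon B^n \to (B')^n$ are homomorphisms $\relstr{C} \to \relstr{D}$ and $\relstr{D} \to \relstr{C}$, because homomorphisms preserve pp-formulas (and a componentwise map preserves a $kn$-ary relation viewed as a $k$-ary relation on the $n$-th power iff the coordinate map preserves the corresponding pp-formula). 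Hence $\relstr{C} \in \HomEq\relstr{D} \subseteq \HomEq\PpPower\class{K}$.

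The interesting case is item (i). Let $\relstr{B}$ be pp-interpretable in $\relstr{A} \in \class{K}$ via a partial surjection $f\colon S \to B$ with $S \subseteq A^n$, where $\dom f = S$, the kernel $\ker f$, and the $f$-preimages of the relations of $\relstr{B}$ are all pp-definable over $\relstr{A}$. The natural candidate is to take the pp-power $\relstr{A}'$ of $\relstr{A}$ on domain $A^n$ whose relations are exactly those pp-definable $n$-ary-coordinatized preimages, together with $S$ (as a unary relation on $A^n$) and $\ker f$ (as a binary relation on $A^n$); then I want to show $\relstr{B}$ is homomorphically equivalent to $\relstr{A}'$. The subtlety is that $f$ is only a partial surjection and need not be injective, so there is no literal isomorphism — one must build homomorphisms in both directions. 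In one direction, pick any right inverse $g\colon B \to S$ of $f$ (so $f\circ g = \id_B$); since the preimages of the $\relstr{B}$-relations are among the relations of $\relstr{A}'$, $g$ is a homomorphism $\relstr{B} \to \relstr{A}'$. In the other direction, $f$ itself extended arbitrarily outside $S$ need not be a homomorphism because it is partial; the cleaner move is to first pass to the substructure of $\relstr{A}'$ induced on $S$ — but $S$ may not be ``all of'' a pp-power. The honest fix, and the step I expect to be the main obstacle, is to realize that $\HomEq$ already absorbs the passage to $\dom f$: one shows $\relstr{A}'$ restricted to $S$ is homomorphically equivalent to $\relstr{A}'$ itself when $S$ is pp-definable (using $g\circ f$ as a retraction-like homomorphism $\relstr{A}' \to \relstr{A}'$ with image inside the copy of $S$), and on $S$ the map $f$ is a genuine surjective homomorphism onto $\relstr{B}$ whose kernel is a relation of the structure, so $\relstr{B}$ is (isomorphic to) a quotient and $f$ together with any $g$ witnesses homomorphic equivalence of $\relstr{B}$ with $\relstr{A}'\!\restriction_S$, hence with $\relstr{A}'$. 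Composing the two homomorphic equivalences via item (ii) yields $\relstr{B} \in \HomEq\relstr{A}' \subseteq \HomEq\PpPower\class{K}$. I would write this carefully, being explicit that ``$g$ is a homomorphism'' uses precisely the pp-definability of the preimages and that the equality relation's preimage being a relation of $\relstr{A}'$ is what lets $f$ be a well-defined homomorphism on $S$.
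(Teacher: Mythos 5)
Items (ii), (iii) and (iv) of your proposal coincide with the paper's argument and are fine. The issue is in item (i), in two respects. First, you put $S=\dom f$ and $\ker f$ into the signature of your pp-power $\relstr{A}'$. Homomorphic equivalence is defined only between structures of the \emph{same} signature, so the claim ``$\relstr{B}$ is homomorphically equivalent to $\relstr{A}'$'' is not well-formed: $\relstr{B}$ has no relation symbols matching $S$ and $\ker f$, and no structure in $\HomEq(\relstr{A}')$ can equal $\relstr{B}$ in its original signature. You would have to drop these two relations (or expand $\relstr{B}$ by the full unary relation and by equality and argue this is harmless), and you never actually use them. Second, the obstacle you single out as the main one --- that $f$ is partial, forcing a detour through the restriction of $\relstr{A}'$ to $S$ and a retraction $g\circ f$ --- disappears once the first point is fixed. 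If the relations of the pp-power $\relstr{C}$ (same signature as $\relstr{B}$, domain $A^n$) are \emph{exactly} the $f$-preimages of the relations of $\relstr{B}$ and nothing else, then every tuple occurring in any relation of $\relstr{C}$ has all of its entries in $\dom f$; hence \emph{any} total extension $f'$ of $f$ to $A^n$ is already a homomorphism $\relstr{C}\to\relstr{B}$, and any $g\colon B\to \dom f$ with $f\circ g=\id_B$ is a homomorphism $\relstr{B}\to\relstr{C}$. That is the paper's one-paragraph proof: no restriction to $S$, no kernel, no quotient.

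A smaller point: your closing remark that the pp-definability of the preimage of equality is ``what lets $f$ be a well-defined homomorphism on $S$'' signals a confusion. The map $f$ is a function, so well-definedness is not at stake, and its being a homomorphism uses only that the relations of $\relstr{C}$ are preimages of those of $\relstr{B}$; the kernel would matter only if you were forming a quotient structure, which this construction deliberately avoids.
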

\begin{proof}
To show (i), let $\relstr{A}$, $\relstr{B}$ be relational structures such that $\relstr{A}$ pp-interprets $\relstr{B}$ and let $f\colon A^n \to B$ be a partial surjective mapping witnessing the pp-interpretability. Take the relational structure $\relstr{C}$ with the same signature as $\relstr{B}$ whose universe is $C = A^n$ and whose relations are $f$-preimages of relations in $\relstr{B}$. Let $f'$ be any mapping from $A^n$ to $B$ extending $f$ and let $g$ be any mapping from $B$ to $A^n$ such that $f'\circ g$ is the identity on $B$. Now $\relstr{C}$ is a pp-power of $\relstr{A}$, $f'$ is a homomorphism from $\relstr{C}$ to $\relstr{B}$ and $g$ is a homomorphism from $\relstr{B}$ to $\relstr{C}$. Thus $\relstr{B} \in \HomEq \PpPower \relstr{A}$, as required.

Item (ii) follows from the fact that the composition of two homomorphisms $\relstr{A} \to \relstr{B} \to \relstr{C}$ is a homomorphism $\relstr{A} \to \relstr{C}$. 
Item (iii) is readily seen as well.

Finally, let $\relstr{A}$ and $\relstr{B}$ be homomorphically equivalent relational structures of the same signature and let $\relstr{C}$ be an $n$-th pp-power of $\relstr{B}$. We define an $n$-th pp-power $\relstr{D}$ of $\relstr{A}$ of the same signature as $\relstr{C}$ as follows: for each relation in $\relstr{C}$, consider some pp-definition from $\relstr{B}$ and replace all relations in this pp-formula by the corresponding relations of $\relstr{A}$. It is easy to see that the $n$-th power of any homomorphism from $\relstr{A}$ to $\relstr{B}$ (from $\relstr{B}$ to $\relstr{A}$, respectively) is a homomorphism from $\relstr{D}$ to $\relstr{C}$ (from $\relstr{C}$ to $\relstr{D}$, respectively). In particular, $\relstr{C}$ and $\relstr{D}$ are homomorphically equivalent, proving (iv).
\end{proof}

The following lemma shows that reduction~({c}), namely the adding of singleton unary relations to model-complete cores, is actually already covered by homomorphic equivalence and pp-interpretations; in particular, we could have omitted it in Definition~\ref{defn:ppconstructible}.

\begin{lemma} \label{lem:adding_const}
Let $\relstr{A}$ be an at most countable $\omega$-categorical structure which is a~model-complete core and let $\relstr{B}$ be a structure obtained from $\relstr{A}$ by adding a singleton unary relation. Then $\relstr{B} \in \HomEq \PpPower \relstr{A}$.
\end{lemma}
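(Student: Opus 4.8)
The plan is to exhibit, for the element $a$ with $\relstr{B}=(\relstr{A},\{a\})$, a two-dimensional pp-power $\relstr{D}$ of $\relstr{A}$ on the domain $A^2$ that is homomorphically equivalent to $\relstr{B}$. The one ingredient I would import from outside is the well-known fact that in an at most countable $\omega$-categorical model-complete core the orbit $O$ of any element under $\Aut(\relstr{A})$ is primitively positively definable (see~\cite{Bodirsky-HDR}); fix a pp-formula $\psi_a$ defining $O$, and for each $b\in O$ fix some $h_b\in\Aut(\relstr{A})$ with $h_b(a)=b$.

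For the construction I would let $\relstr{D}$ have, for every $k$-ary relation $R$ of $\relstr{A}$, the $k$-ary relation $R^{\relstr{D}}$ on $A^2$ consisting of all tuples $((x_1,y_1),\dots,(x_k,y_k))$ with $R^{\relstr{A}}(x_1,\dots,x_k)$ and $y_1=\dots=y_k$ (i.e.\ $R$ holding fibrewise), and let the new unary symbol be interpreted by the ``restricted diagonal'' $U^{\relstr{D}}=\{(b,b):b\in O\}$. Regarded as relations on $A$ of arities $2k$ and $2$, both $R^{\relstr{D}}$ and $U^{\relstr{D}}$ are pp-definable from $\relstr{A}$ — the latter as $x=y\wedge\psi_a(x)$ — so $\relstr{D}$ is indeed a pp-power of $\relstr{A}$.

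It then remains to produce homomorphisms both ways. The map $x\mapsto(x,a)$ is a homomorphism $\relstr{B}\to\relstr{D}$, since the relations of $\relstr{A}$ restrict on the fibre $A\times\{a\}$ to a copy of $\relstr{A}$ and $a$ goes to $(a,a)\in U^{\relstr{D}}$. For the reverse direction I would take $g\colon A^2\to A$ given by $g(x,y)=h_y^{-1}(x)$ when $y\in O$ and $g(x,y)=x$ otherwise: whenever a tuple lies in $R^{\relstr{D}}$ all of its second coordinates equal some common value $y'$, and then $g$ acts on the first coordinates either by the single automorphism $h_{y'}^{-1}$ (if $y'\in O$) or by the first projection (if $y'\notin O$), so the image lies in $R^{\relstr{A}}$ in both cases; moreover $g(b,b)=h_b^{-1}(b)=a$ for all $b\in O$, so $g$ sends $U^{\relstr{D}}$ into $\{a\}$. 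Hence $g$ is a homomorphism $\relstr{D}\to\relstr{B}$, and therefore $\relstr{B}\in\HomEq\PpPower\relstr{A}$.

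The main obstacle is really the design of $\relstr{D}$: the point is to realise that the relations of $\relstr{A}$ must be confined to the fibres $A\times\{y\}$ and the added unary relation must be the diagonal cut down to the orbit $O$, so that a fibrewise, orbit-dependent twisting map can serve as the homomorphism back; once this shape is found everything else is a routine check. The only genuinely non-elementary input is the pp-definability of $O$, and this is exactly where both hypotheses on $\relstr{A}$ are used — and they cannot be dropped for a construction of this form, since any pp-definable set that contains $a$ and from each of whose points some automorphism reaches $a$ is, by $\Aut(\relstr{A})$-invariance, forced to be all of $O$.
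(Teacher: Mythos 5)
Your construction is correct and is essentially the paper's own proof: the same pp-power on $A^2$ with the relations of $\relstr{A}$ placed fibrewise and the new unary symbol interpreted as the diagonal restricted to the orbit $O$, the same forward map $x\mapsto(x,a)$, and the same orbit-dependent twisting map back, resting on the same key fact that $O$ is pp-definable in an $\omega$-categorical model-complete core. The only (immaterial) difference is that the paper additionally confines the fibrewise relations to fibres over $O$, which your argument shows is not needed.
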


\begin{proof}
We denote $S = \{s\}$, $s \in A$, the added singleton unary relation. Let $O \subseteq A$ denote the orbit of $s$ under the action of $\Aut(\relstr{A})$. Since $\relstr{A}$ is a model-complete core, $O$ is preserved by all endomorphisms of $\relstr{A}$, and since $O$ consists of only one orbit with respect to the action of $\Aut(\relstr{A})$, this implies that $O$ is preserved by all polymorphisms of $\relstr{A}$~\cite{tcsps-journal}. Hence, using $\omega$-categoricity we infer that  $O$ is pp-definable in $\relstr{A}$~\cite{BodirskyNesetrilJLC}.

We define a relational structure $\relstr{C}$ over the domain $C = A^2$ with the same signature as $\relstr{B}$. The relation of $\relstr{C}$ corresponding to a $k$-ary relation $R$ in $\relstr{A}$ is defined by
\[
\overline{R} = \{((a_1,b_1), \dots, (a_k,b_k)) \in (A^2)^k : b_1=b_2=\dots=b_k \in O, (a_1,\dots, a_k) \in R\}
\]
and the relation corresponding to the singleton unary relation $S$ is defined as $\overline{S} = \{(a,a): a \in O\}$. 
Clearly, $\relstr{C}$ is a pp-power of $\relstr{A}$, so it remains to show that $\relstr{C}$ is homomorphically equivalent to $\relstr{B}$.

The mapping $A \to A^2$, defined by $a \mapsto (a,s)$ is a homomorphism from $\relstr{B}$ to $\relstr{C}$.

To define a homomorphism $f\colon \relstr{C} \to \relstr{B}$ we first pick, for each $a \in O$, an automorphism $\alpha_a \in \Aut(\relstr{A})$ with $\alpha_a(a) = s$. Now $f$ is defined by $f(a,b) = \alpha_b(a)$ if $b \in O$, and otherwise arbitrarily. We check that this mapping is indeed a homomorphism. If $R$ is a $k$-ary relation in $\relstr{A}$ and $\mathbf{x} = ((a_1,b_1), \dots, (a_k,b_k)) \in \overline{R}$, then 
$b_1=b_2= \dots=b_k=b \in O$ and we have $f(\mathbf{x}) = (\alpha_b(a_1), \dots, \alpha_b(a_k))$. This $k$-tuple is in $R$ as $(a_1, \dots, a_k) \in R$ and $\alpha_b$ is an automorphism of $\relstr{A}$. Finally, $f$ also preserves the added relation since each $(a,a) \in \overline{S}$ is mapped to $\alpha_a(a) = s \in S$.
\end{proof}

A corollary of the last two lemmata is that we can cover all general reductions to a given structure by considering all structures which are homomorphically equivalent to a pp-power of that structure.

\begin{corollary}\label{cor:relational}
The following are equivalent for at most countable $\omega$-categorical relational structures $\relstr{A}, \relstr{B}$.
\begin{itemize}
\item[(i)] $\relstr{B}$ can be pp-constructed from $\relstr{A}$.
\item[(ii)] $\relstr{B} \in \HomEq \PpPower \relstr{A}$; that is, $\relstr{B}$ is homomorphically equivalent to a pp-power of $\relstr{A}$.
\end{itemize}
\end{corollary}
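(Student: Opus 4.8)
The plan is to prove the two implications separately, treating Lemmas~\ref{lem:pppp} and~\ref{lem:adding_const} as black boxes.

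For (ii) $\Rightarrow$ (i) I would observe that a pp-power is a special case of a pp-interpretation. Indeed, if $\relstr{C}$ is a pp-power of $\relstr{A}$ realized on the domain $A^n$ with pp-definable relations, then the identity map on $A^n$ (post-composed with the relevant isomorphism so that the signature matches that of $\relstr{B}$) is a partial surjection with full, pp-definable domain whose preimage of equality is the pp-definable diagonal $\bigwedge_{i=1}^{n} x_i = y_i$ and whose preimages of the relations of $\relstr{B}$ are the given pp-definable relations of $\relstr{C}$; hence $\relstr{A}$ pp-interprets $\relstr{C}$. Since $\relstr{B} \in \HomEq \PpPower \relstr{A}$ means precisely that $\relstr{B}$ is homomorphically equivalent to such a $\relstr{C}$, the sequence $\relstr{A}, \relstr{C}, \relstr{B}$ is a pp-construction (a pp-interpretation step followed by a homomorphic-equivalence step); no $\omega$-categoricity is needed for this direction.

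For (i) $\Rightarrow$ (ii), suppose $\relstr{A} = \relstr{C}_1, \dots, \relstr{C}_k = \relstr{B}$ witnesses pp-constructibility. The first step is to check that each individual construction step satisfies $\relstr{C}_{i+1} \in \HomEq \PpPower \relstr{C}_i$: for a pp-interpretation this is Lemma~\ref{lem:pppp}(i); for homomorphic equivalence it holds because $\relstr{C}_i \in \PpPower \relstr{C}_i$ (the trivial pp-power with $n = 1$) and $\HomEq$ is monotone; and for the addition of a singleton to an $\omega$-categorical model-complete core this is exactly Lemma~\ref{lem:adding_const}, whose hypotheses are precisely those imposed by the third clause of Definition~\ref{defn:ppconstructible}. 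Iterating yields $\relstr{B} \in (\HomEq \PpPower)^{k-1}\{\relstr{A}\}$. The second step is to collapse this iterate: by induction on $m$ one shows $(\HomEq \PpPower)^m \class{K} \subseteq \HomEq \PpPower \class{K}$, the inductive step being $\HomEq \PpPower \HomEq \PpPower \class{K} \subseteq \HomEq \HomEq \PpPower \PpPower \class{K} = \HomEq \PpPower \class{K}$, where the inclusion pushes $\PpPower$ past $\HomEq$ using Lemma~\ref{lem:pppp}(iv) and the equality uses parts (ii) and (iii). Taking $\class{K} = \{\relstr{A}\}$ gives $\relstr{B} \in \HomEq \PpPower \relstr{A}$.

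I do not expect a genuine obstacle here: this is a corollary, and all the substance already resides in Lemmas~\ref{lem:pppp} and~\ref{lem:adding_const}. The only point needing mild care is the telescoping induction, which rests entirely on the ``commutation'' inclusion $\PpPower \HomEq \class{K} \subseteq \HomEq \PpPower \class{K}$ of Lemma~\ref{lem:pppp}(iv); a secondary bookkeeping point is ensuring, in the (ii) $\Rightarrow$ (i) direction, that the pp-power $\relstr{C}$ is chosen with the same signature as $\relstr{B}$ so that the homomorphic-equivalence step between them is well-formed.
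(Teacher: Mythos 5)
Your proof is correct and follows essentially the same route as the paper: the forward direction reduces each construction step to $\HomEq\PpPower$ via Lemma~\ref{lem:pppp}(i) and Lemma~\ref{lem:adding_const} and then telescopes with parts (ii)--(iv) of Lemma~\ref{lem:pppp}, exactly as the paper does, while the converse is the direction the paper dismisses as trivial (your observation that a pp-power is a special pp-interpretation is the intended justification). No gaps.
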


\begin{proof}
Lemma~\ref{lem:pppp}, item (i), and Lemma~\ref{lem:adding_const} imply that if 
 $\relstr{B}$ can be pp-constructed from $\relstr{A}$, then it can be constructed using pp-powers and homomorphic equivalence. From items (ii), (iii) and (iv) of Lemma~\ref{lem:pppp} we can conclude that $\relstr{B} \in \HomEq \PpPower \relstr{A}$.

The other implication is trivial.
\end{proof}

We shall conclude this section with an example of two finite idempotent relational structures $\relstr{A}$, $\relstr{B}$ such that $\relstr{B} \in \HomEq \PpPower \relstr{A}$ but $\relstr{B} \not\in \PpInt \relstr{A}$.  Therefore, as mentioned in the introduction, the common practice of first reducing to cores and then considering only pp-interpretations results in a true loss of power.

\begin{example}\label{ex:hepp}
Consider the relational structure $\relstr{A}$ with domain $A = \mathbb{Z}_2^2$ consisting of ternary relations $R_{(a,b)}$, $(a,b) \in \mathbb{Z}_2^2$ defined by
\[
R_{(a,b)} = \{(\mathbf{x},\mathbf{y},\mathbf{z}) \in (\mathbb{Z}_2^2)^3: \mathbf{x} + \mathbf{y} + \mathbf{z} = (a,b)\},
\]
and unary singleton relations $\{(a,b)\}$, $(a,b) \in \mathbb{Z}_2^2$. Let $\relstr{A}'$ be the reduct of $\relstr{A}$ formed by the relations $R_{(0,0)}$, $R_{(1,0)}$, $\{(0,0)\}$, and $\{(1,0)\}$. Trivially, $\relstr{A}'$ is pp-definable from $\relstr{A}$.
Finally, take the relational structure $\relstr{B}$ with $B = \mathbb{Z}_2$ and relations $R_a$, $a \in \mathbb{Z}_2$, where
\[
R_a = \{(x,y,z) \in \mathbb{Z}_2^3: x+y+z = a\},
\]
together with singletons $\{0\}$, $\{1\}$.

Note that $\clone{A}$ consists of idempotent affine operations of the module $\mathbb{Z}_2^2$ over $\End(\mathbb{Z}_2^2)$
and $\clone{B}$ is formed by idempotent affine operations of the abelian group $\mathbb{Z}_2$.

The mappings $A \to B$, $(x_1,x_2) \mapsto x_1$ and $B \to A$, $x \mapsto (x,0)$ are homomorphisms from $\relstr{A}'$ to $\relstr{B}$ and from $\relstr{B}$ to $\relstr{A}'$, respectively. (In fact, $\relstr{B}$ is the core of $\relstr{A}'$.) Therefore, $\relstr{B} \in \HomEq \PpPower \relstr{A}$.

Suppose  that $\relstr{A}$ pp-interprets $\relstr{B}$ and $f$ is a mapping from $C \subseteq A^n$ onto $B$ witnessing this. Let $\alpha$ be the kernel of $f$ -- it is an equivalence relation on $C$ with two blocks.
By the definition of pp-interpretation, both $C$ and $\alpha$ are pp-definable from $\relstr{A}$. Since $\relstr{A}$ contains the singleton unary relations, then both blocks of $\alpha$ are pp-definable as well. Thus $C$ is a pp-definable relation which is a disjoint union of two pp-definable relations. This is impossible as it is easily seen that the cardinality of any relation pp-definable from $\relstr{A}$ is a power of $4$.
\end{example}

\section{Algebraic Constructions} \label{sec:tra}

In the light of Theorems~\ref{thm:old_finite} and~\ref{thm:old_infinite}, one can say that the algebraic counterpart of pp-interpretations in $\omega$-categorical structures is roughly the $\HSPfin$ operator. We introduce a new algebraic operator which corresponds to homomorphic equivalence.

\begin{defn}
Let $\alg{A}$ be an algebra with signature $\tau$. Let  $B$ be a set, and let 
 $h_1\colon B\To A$ and $h_2 \colon A\To B$ be functions. We define a $\tau$-algebra $\alg{B}$ with domain $B$ as follows: for every operation $f(x_1,\ldots,x_n)$ of $\alg{A}$,  $\alg{B}$ has the operation
$$
(x_1,\ldots,x_n)\mapsto h_2(f(h_1(x_1),\ldots,h_1(x_n))).
$$
We call $\alg{B}$ a~\emph{\transfer{}}
 of $\alg{A}$. If $h_2\circ h_1$ is the identity function on $B$, then 
we say that $\alg{B}$ is a~\emph{retraction}
 of $\alg{A}$. 
\end{defn}

\begin{defn}
For a class $\class{K}$ of algebras, 
we write $\Tra \class{K}$ for all \transfers{} of algebras in $\class{K}$, and similarly $\Ret \class{K}$ for all retractions of algebras in $\class{K}$. 
 We also apply this operator to single algebras, writing $\Tra \alg{A}$, with the obvious meaning. 
\end{defn}

By viewing the operations of a function clone as those of an algebra, we moreover apply the operator to function clones, similarly to the operators $\HHH$, $\SSS$, and $\PPP$. Observe, however, that contrary to the situation with the latter operators, 
the \transfer{} of a~function clone need not be a function clone since it need not contain the projections or be closed under composition. 

\begin{defn}
For a function clone $\cloA$, we denote by $\Tra\cloA$ all sets of functions obtained as in the above definition.
\end{defn}

%Clearly, a shrink is a special case of a double shrink. When $h_2$ is injective, then a double shrink is just a shrink. When $h_2$ is not injective, then its kernel is a congruence relation of $\B$, and so $\B$ could as well be obtained by first applying a shrink and then blowing up the elements of the algebra obtained to classes. Finally, observe that the double shrink can increase the size of the domain, whereas the shrink cannot. In particular, one can obtain an infinite algebra by double shrinking a finite algebra, which is impossible by the shrink.

It is well-known that for any class of algebras $\class{K}$,  the class $\HSP \class{K}$ is equal to the closure of $\class{K}$ under $\HHH$, $\SSS$ and $\PPP$~\cite{Bir-On-the-structure}. 
We now provide a similar observation which includes the operator $\Tra$. Note that in particular, the first item of the following lemma implies that the operator $\Tra$ is a common generalization of the operators $\HHH$ and $\SSS$.

\begin{lemma}\label{lem:algebraicinclusions}
Let $\class{K}$ be a class of algebras of the same signature. 
\begin{itemize}
\item[(i)] $\HHH \class{K} \subseteq \Tra \class{K}$\, and\,  $\SSS \class{K} \subseteq \Tra \class{K}$;
\item[(ii)] $\RR \class{K} \subseteq \Tra \class{K}$;
\item[(iii)] $\PR \class{K} \subseteq \RP \class{K}$\, and\, $\Pfin \Tra \class{K} \subseteq \RPfin \class{K}$.
\end{itemize}
Analogous statements hold for the operator $\Ret$ instead of $\Tra$.
\end{lemma}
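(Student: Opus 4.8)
The plan is to verify each inclusion by exhibiting, for a given algebra or family of algebras on the right-hand side, the data $(B, h_1, h_2)$ (and possibly a product structure) realising it as a reflection, and to keep track of the underlying sets and maps concretely.

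First, for (i): if $\alg{B} \in \HHH \class{K}$, then $\alg{B}$ is the action of some $\alg{A} \in \class{K}$ on the classes $A/\theta$ of an invariant equivalence $\theta$; take $h_2\colon A \to A/\theta$ the quotient map and $h_1\colon A/\theta \to A$ any section (a choice of representative in each class). Then $h_2(f(h_1(x_1),\ldots,h_1(x_n)))$ is exactly the induced operation on $A/\theta$, because $f$ is compatible with $\theta$ so the value's class does not depend on the choice of representatives; hence $\alg{B} \in \Tra \class{K}$. If $\alg{B} \in \SSS \class{K}$, then $B \subseteq A$ is an invariant subuniverse; take $h_1\colon B \hookrightarrow A$ the inclusion and $h_2\colon A \to B$ any retraction onto $B$ (which exists as a function, no algebraic property needed). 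Since $B$ is closed under the operations, $f(h_1(x_1),\ldots,h_1(x_n)) \in B$ already, so $h_2$ acts as the identity there and we recover the subalgebra operations. Note that in both cases $h_2 \circ h_1 = \id_B$, so these are in fact retractions, which also handles the ``analogous statement'' clause and, together with the definition, gives (ii): $\RR \class{K} \subseteq \Tra \class{K}$ is immediate since every retraction is a reflection.

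For (iii), the first inclusion $\PR \class{K} \subseteq \RP \class{K}$: given reflections $\alg{B}_i$ of $\alg{A}_i \in \class{K}$ witnessed by $h_1^i, h_2^i$, for $i$ ranging over an index set $I$, consider $\prod_i \alg{A}_i \in \PPP \class{K}$ and the componentwise maps $h_1 = \prod_i h_1^i\colon \prod_i B_i \to \prod_i A_i$ and $h_2 = \prod_i h_2^i\colon \prod_i A_i \to \prod_i B_i$. Since operations in a product are computed componentwise, the reflection of $\prod_i \alg{A}_i$ along $(h_1,h_2)$ is precisely $\prod_i \alg{B}_i$; hence $\prod_i \alg{B}_i \in \RP \class{K}$. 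The second inclusion $\Pfin \Tra \class{K} \subseteq \RPfin \class{K}$ is the same argument restricted to finite index sets, and the analogous statements for $\Ret$ follow because the componentwise product of retractions ($h_2^i \circ h_1^i = \id_{B_i}$) again satisfies $h_2 \circ h_1 = \id$.

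I do not expect a genuine obstacle here; the only point requiring a little care is making sure the chosen set-theoretic sections in the $\HHH$ and $\SSS$ cases do not need to respect any structure — they do not, since the reflection construction only asks for bare functions $h_1, h_2$ — and, conversely, checking that the composite $h_2(f(h_1(\cdot),\ldots,h_1(\cdot)))$ really equals the intended quotient/sub/product operation, which in each case is a one-line verification using invariance of $\theta$, closure of $B$, or componentwise computation in products respectively. The mild subtlety worth stating explicitly is that $\Tra$ of a class need not be idempotent as an operator and that $\Tra$ subsumes both $\HHH$ and $\SSS$ but the reverse containments fail, which is why the lemma is phrased as inclusions.
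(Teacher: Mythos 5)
Your treatments of items (i) and (iii) are correct and essentially identical to the paper's: for $\HHH$ take the quotient map $h_2$ and any set-theoretic section $h_1$, for $\SSS$ take the inclusion $h_1$ and any extension $h_2$, and for products take the componentwise maps; in the first two cases $h_2\circ h_1=\id_B$, so one even gets retractions, which covers the $\Ret$ analogues there.

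However, item (ii) is not proven, because you have misread the operator $\RR$. In the paper's notation $\RR\class{K}$ means $\Tra\Tra\class{K}$, a \emph{reflection of a reflection}; the retraction operator is written $\Ret$ ($\mathsf{R}_{ret}$), not $\RR$. So (ii) asserts that the composite of two reflections is again a reflection, and your remark that ``every retraction is a reflection'' addresses a different (and trivial) statement. The missing argument is short but must be given: if $\alg{C}$ is a reflection of $\alg{B}$ via $h_1\colon C\to B$, $h_2\colon B\to C$, and $\alg{B}$ is a reflection of $\alg{A}$ via $h_1'\colon B\to A$, $h_2'\colon A\to B$, then unfolding the definition twice shows that the operations of $\alg{C}$ are $(x_1,\dots,x_n)\mapsto (h_2\circ h_2')\bigl(f(h_1'(h_1(x_1)),\dots,h_1'(h_1(x_n)))\bigr)$, so $\alg{C}$ is a reflection of $\alg{A}$ via $h_1'\circ h_1$ and $h_2\circ h_2'$; for the $\Ret$ analogue one additionally notes $(h_2\circ h_2')\circ(h_1'\circ h_1)=h_2\circ\id_B\circ h_1=\id_C$. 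Relatedly, your closing remark that ``$\Tra$ of a class need not be idempotent as an operator'' is false and in fact contradicts (ii): since $\class{K}\subseteq\Tra\class{K}$ (take $h_1=h_2=\id_A$), item (ii) says precisely that $\Tra$ is a closure operator on classes of algebras.
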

\begin{proof}
If $\alg{B}\in \HHH \class{K}$, then there exists an algebra $\alg{A}\in \class{K}$ and a surjective homomorphism $h\colon A\To B$. Picking any function $h'\colon B\To A$ such that $h\circ h'$ is the identity function on $B$ and setting $h_1:=h'$ and $h_2:=h$ we then see that $\alg{B}$ is a \transfer{}, and indeed even a retraction, of $\alg{A}$.

Now let $\alg{B}\in \SSS \class{K}$, and let $\alg{A}\in \class{K}$ such that $B\subseteq A$ is an invariant under the operations of $\alg{A}$ and such that $\alg{B}$ arises by restricting the operations of $\alg{A}$ to $B$. Setting $h_1$ to be the identity function on $B$, and $h_2$ to be any extension of $h_1$ to $A$, then shows that $\alg{B}$ is a retraction of $\alg{A}$.

We now show~(ii). Suppose that $\alg{C}$ is a \transfer{} of $\alg{B}$, witnessed by functions $h_1\colon C\To B$ and $h_2\colon B\To C$, and that $\alg{B}$ is a \transfer{} of $\alg{A}$, witnessed by functions $h_1'\colon B\To A$ and $h_2'\colon A\To B$. Then the functions $h_1'\circ h_1\colon C\To A$ and $h_2\circ h_2'\colon A\To C$ witness that $\alg{C}$ is a \transfer{} of $\alg{A}$. The same proof works for retractions instead of \transfers{}.

We turn to the proof of~(iii). Let $I$ be an~arbitrary set, and suppose that $\alg B_i$ is a \transfer{} of $\alg A_i$ for every $i\in I$, witnessed by functions $g_i\colon B_i\To A_i$ and $h_i\colon A_i\To B_i$. We show that the product $\prod_{i\in I}\alg B_i$ is a~\transfer{} of the product $\prod_{i\in I}\alg A_i$. But this is easy: $g_i$ and $h_i$ induce functions $g\colon \prod_{i\in I}B_i\To \prod_{i\in I} A_i$ and $h\colon \prod_{i\in I}A_i\To \prod_{i\in I} B_i$ by letting them act on the respective components. It is easily verified that $\prod_{i\in I}\alg B_i$ is a \transfer{} of $\prod_{i\in I}\alg A_i$ via those functions. This proves also the finite product version of the statement.
%We turn to the proof of~(iii). Suppose that $\alg{B}$ is a \transfer{} of $\alg{A}$, witnessed by functions $h_1\colon B\To A$ and $h_2\colon A\To B$, and let $I$ be any set. We have to show that the power $\alg{B}^I$ is a \transfer{} of a power of $\alg{A}$; in fact, we will show that it is a \transfer{} of the power $\alg{A}^I$. But this is easy: $h_1$ and $h_2$ induce functions $h_1'\colon B^I\To A^I$ and $h_1'\colon A^I\To B^I$ by letting them act on components. It is easily verified that $\alg{B}^I$ is a \transfer{} of $\alg{A}^I$ via those functions. This proves also the finite power version of the statement.
\end{proof}

\begin{corollary}
Let $\class{K}$ be a class of algebras of the same signature. Then
\begin{itemize}
\item  $\RP \class{K}$ is equal to the closure of $\class{K}$ under $\Tra$, $\HHH$, $\SSS$, and $\PPP$;
\item  $\RPfin \class{K}$ is equal to the closure of $\class{K}$ under $\Tra$, $\HHH$, $\SSS$, and $\Pfin$;
\item $\Ret \PPP \class{K}$ is equal to the closure of $\class{K}$ under $\Ret$, $\HHH$, $\SSS$, and $\PPP$;
\item $\Ret \Pfin \class{K}$ is equal to the closure of $\class{K}$ under $\Ret$, $\HHH$, $\SSS$, and $\Pfin$.
\end{itemize}
\end{corollary}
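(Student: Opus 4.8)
The plan is to derive all four identities directly from Lemma~\ref{lem:algebraicinclusions}; Birkhoff's theorem, although it motivates the statement, is not actually needed. I will argue the first item in detail, and the other three then follow by the same argument after replacing $\PPP$ by $\Pfin$, or $\Tra$ by $\Ret$, and invoking the corresponding variant of Lemma~\ref{lem:algebraicinclusions}.

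Write $\class{C}$ for the closure of $\class{K}$ under $\Tra$, $\HHH$, $\SSS$, and $\PPP$. Since $\RP\class{K} = \Tra\PPP\class{K}$ is obtained from $\class{K}$ by one application of $\PPP$ followed by one application of $\Tra$, and $\class{C}$ is closed under both operators, we have $\RP\class{K}\subseteq\class{C}$. For the converse inclusion, recall that $\class{C}$ is the least class containing $\class{K}$ which is closed under the four operators, so it suffices to verify that $\RP\class{K}$ itself has these closure properties.

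First, $\class{K}\subseteq\RP\class{K}$, because every algebra is a one-factor product of itself and is a \transfer{} of itself via $h_1 = h_2 = \id$. For closure under $\HHH$ and $\SSS$: by item~(i) of Lemma~\ref{lem:algebraicinclusions} both $\HHH(\RP\class{K})$ and $\SSS(\RP\class{K})$ are contained in $\Tra(\RP\class{K}) = \Tra\Tra\PPP\class{K}$, and item~(ii) (which says $\RR\class{K}\subseteq\Tra\class{K}$, hence $\Tra\Tra\PPP\class{K}\subseteq\Tra\PPP\class{K}$) shows this is contained in $\Tra\PPP\class{K} = \RP\class{K}$; the same computation with the outer operator being $\Tra$ itself gives closure under $\Tra$. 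For closure under $\PPP$: applying item~(iii), in the form $\PR\class{K}\subseteq\RP\class{K}$, to the class $\PPP\class{K}$ gives $\PPP\Tra\PPP\class{K}\subseteq\Tra\PPP\PPP\class{K}$, and this equals $\Tra\PPP\class{K} = \RP\class{K}$ since a product of products is again a product, i.e.\ $\PPP\PPP\class{K} = \PPP\class{K}$. Hence $\RP\class{K}$ contains $\class{K}$ and is closed under all four operators, so $\class{C}\subseteq\RP\class{K}$, and equality follows.

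For $\RPfin\class{K}$ one repeats the argument with $\Pfin$ in place of $\PPP$, using the finite-product clauses of items~(ii) and~(iii) of Lemma~\ref{lem:algebraicinclusions}, in particular $\Pfin\Tra\class{K}\subseteq\RPfin\class{K}$, together with $\Pfin\Pfin\class{K} = \Pfin\class{K}$. For the two retraction statements one replaces $\Tra$ by $\Ret$ throughout and appeals to the versions of~(i)--(iii) stated for $\Ret$ at the end of Lemma~\ref{lem:algebraicinclusions}, noting that $\class{K}\subseteq\Ret\class{K}$ because $h_2\circ h_1 = \id$ when $h_1 = h_2 = \id$. There is no genuine obstacle here: all the content sits in Lemma~\ref{lem:algebraicinclusions}, and the only points needing care are keeping track of the order in which the operators are applied to a class and using the idempotence of $\PPP$ and $\Pfin$.
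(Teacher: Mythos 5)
Your proof is correct and is exactly the argument the paper intends by its one-line proof ``direct consequence of Lemma~\ref{lem:algebraicinclusions}'': show $\RP\class{K}$ (resp.\ its variants) contains $\class{K}$ and is closed under the relevant operators using items (i)--(iii) of that lemma together with idempotence of $\PPP$ and $\Pfin$, and note the reverse inclusion is immediate. No gaps.
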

\begin{proof}
This is a direct consequence of Lemma~\ref{lem:algebraicinclusions}.
\end{proof}

The following proposition establishes relationships between the relational and algebraic constructions discussed to far. 
Items (i) and (iii) belong to the fundamentals of the algebraic approach to the CSP, and have been observed in~\cite{JBK} and~\cite{BodirskySurvey}; see also~\cite{Topo-Birk}. The other two items are similar statements for our notion of pp-power  and the operator $\Tra$. 

\begin{proposition}\label{prop:algebraic}
Let $\relstr{A}$, $\relstr{B}$ be at most countable $\omega$-categorical structures. Then
\begin{itemize}
\item[(i)] $\relstr{B}$ is pp-definable from $\relstr{A}$ iff $\clone{B} \in \Exp \clone{A}$;
\item[(ii)] $\relstr{B}$ is a pp-power of $\relstr{A}$ iff $\clone{B} \in \EPfin \clone{A}$;
\item[(iii)] $\relstr{B}$ is pp-interpretable in $\relstr{A}$ iff $\clone{B} \in \EHSPfin \clone{A}$;
\item[(iv)] $\relstr{B}$ is homomorphically equivalent to a structure which is pp-definable from $\relstr{A}$ iff $\clone{B} \in \ETra \clone{A}$.
\end{itemize}
\end{proposition}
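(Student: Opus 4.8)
\textbf{Proof plan for Proposition~\ref{prop:algebraic}.} Items (i) and (iii) are quoted from the literature, so the real work is (ii) and (iv). The plan is to establish (ii) first and then derive (iv) from it together with Lemma~\ref{lem:algebraicinclusions} and the characterization of $\ETra$ in terms of $\Tra$, $\HHH$, $\SSS$.

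For (ii), I would prove both directions by directly translating between a pp-definition of the relations of $\relstr{B}$ on $A^n$ and the polymorphism clone. For the forward direction, suppose $\relstr{B}$ has domain $A^n$ with relations pp-defined from $\relstr{A}$. Every polymorphism of $\relstr{A}$ acts coordinatewise on $A^n$, and since polymorphisms preserve pp-definable relations, this coordinatewise action yields polymorphisms of $\relstr{B}$; thus $\cloB \supseteq$ (the coordinatewise action of $\PPPfin \cloA$ on $A^n$), which gives $\cloB \in \EPfin \cloA$. For the converse, suppose $\cloB \in \EPfin \cloA$, so $B$ may be identified with $A^n$ and every operation of $\cloA$ acting coordinatewise on $A^n$ is a polymorphism of $\relstr{B}$. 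The key step is then a Galois-type argument: by $\omega$-categoricity of $\relstr{A}$ (hence of its $n$-th power, which is again $\omega$-categorical), a relation on $A^n$ is pp-definable from $\relstr{A}$ — viewed as a relation on $A$ of the appropriate arity — if and only if it is preserved by $\Pol(\relstr{A})$ acting coordinatewise, by the standard $\omega$-categorical pp-definability theorem of Bodirsky--Ne\v{s}et\v{r}il (already invoked in the proof of Lemma~\ref{lem:adding_const}). Since each relation of $\relstr{B}$ is preserved by all of $\cloB \supseteq \PPPfin \cloA$, it is pp-definable from $\relstr{A}$, so $\relstr{B}$ is a pp-power of $\relstr{A}$.

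For (iv), the forward direction: if $\relstr{B}$ is homomorphically equivalent to some $\relstr{B}'$ which is pp-definable from $\relstr{A}$, then by (i) $\clone{B'} \in \Exp \cloA$, and the two homomorphisms $h_1 \colon B \to B'$ and $h_2 \colon B' \to B$ show that every operation of $\clone{B'}$ induces, via $f \mapsto h_2 \circ f \circ (h_1,\ldots,h_1)$, an operation of $\cloB$ (this is exactly where one checks that polymorphisms of $\relstr{B}$ arise this way, using that $h_1, h_2$ are homomorphisms and that the relations of $\relstr{B}$, $\relstr{B}'$ match up); hence $\cloB \in \Tra \clone{B'} \subseteq \Tra \Exp \cloA = \ETra \cloA$ (the operators commute in the right way since adding functions and then taking a reflection is the same as reflecting first). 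For the converse, if $\cloB \in \ETra \cloA$, unwind the definition: there is $\clone{B'} \in \Exp \cloA$ and functions $h_1, h_2$ exhibiting $\cloB$ as (contained in) a reflection of $\clone{B'}$. By (i), $\clone{B'} = \Pol(\relstr{B}')$ for some $\relstr{B}'$ pp-definable from $\relstr{A}$, and one may take $\relstr{B}'$ to be, say, the structure on $B'$ whose relations are all relations pp-definable from $\relstr{A}$ of the relevant arities, or more economically the $h_1/h_2$-transports of the relations of $\relstr{B}$. The maps $h_1$ and $h_2$ are then readily checked to be homomorphisms $\relstr{B} \to \relstr{B}'$ and $\relstr{B}' \to \relstr{B}$ respectively — essentially because ``$h_i$ commutes with the reflected operations'' and ``$h_i$ is a homomorphism'' are the same condition once the relations are chosen as the preserved ones — so $\relstr{B}$ is homomorphically equivalent to the pp-definable structure $\relstr{B}'$.

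The main obstacle I expect is the bookkeeping in the converse of (iv): one must be careful about which relational structure $\relstr{B}'$ to build on the set $B'$, and verify that with that choice $h_1$ and $h_2$ are genuinely homomorphisms, not merely clone-theoretic reflection maps. The clean way is to define the relations of $\relstr{B}'$ as exactly those needed — i.e.\ push the relations of $\relstr{B}$ forward along $h_1$ (and check they are pp-definable from $\relstr{A}$ using that $\clone{B'} \supseteq$ the expansion of $\cloA$ and the $\omega$-categorical Galois correspondence) — after which homomorphicity of $h_1$ is immediate and homomorphicity of $h_2$ follows from $h_2 \circ h_1 = \id$ on the relevant orbits or, in general, from a direct check. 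A secondary subtlety is making sure the $\omega$-categoricity hypothesis is available where needed: in (iv) the intermediate structure $\relstr{B}'$ and its clone need not a priori be $\omega$-categorical as a reflection, but since we only use $\omega$-categoricity of $\relstr{A}$ (and of $\relstr{A}^n$) to invoke pp-definability, this causes no problem.
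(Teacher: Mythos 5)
Your proposal is correct and follows essentially the same route as the paper: (i) and (iii) are quoted, (ii) is obtained from (i) applied to finite powers, the forward direction of (iv) reflects the polymorphism clone of the pp-definable structure through the two homomorphisms, and the converse builds the intermediate structure by taking each relation $R$ of $\relstr{B}$ to the $\cloA$-closure of $h_1[R]$ and checks that $h_1$, $h_2$ are homomorphisms — exactly the paper's construction. The only quibbles are cosmetic: $\cloB$ merely \emph{contains} a reflection of $\clone{B'}$ rather than being one (so one lands in $\EEE\Tra$, as you implicitly acknowledge), and the aside about $h_2\circ h_1=\id$ is not available for a general reflection, but your ``direct check'' fallback is precisely the argument the paper uses.
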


\begin{proof}
For (i) and (iii) we refer to the literature, namely~\cite{JBK} and~\cite{BodirskySurvey}. Item~(ii) is a straightforward consequence of~(i).

To prove~(iv), let $\relstr{A}'$ be pp-definable in $\relstr{A}$, and let $h_1\colon B\To A$ and $h_2\colon A\To B$ be homomorphisms from $\relstr{B}$ into $\relstr{A}'$ and vice-versa. We want to show $\clone{B} \in \ETra \clone{A}$. Let $\cloC'$ be the set of all functions on $B$ which are obtained by applying a \transfer{} to $\cloA'$ via the mappings $h_1, h_2$. Because the latter mappings are homomorphisms, it follows that all functions in $\cloC'$ preserve all relations of $\relstr{B}$, and so $\cloC'$ is contained in $\cloB$. By~(i) we have $\cloA'\supseteq \cloA$, and so $\cloC'$ contains the set $\cloC$ of all functions on $B$ which are obtained by applying a \transfer{} to $\cloA$ via the mappings $h_1, h_2$. Hence, $\clone{B} \in \ETra \clone{A}$.

For the other direction, let $h_1\colon B\To A$ and $h_2\colon A\To B$ be so that the \transfer{} $\cloC$ of $\cloA$ by those functions is contained in $\cloB$.
For every relation $R$ of $\relstr{B}$ set
$$
R':=\{f(h_1(r_1),\ldots,h_1(r_n)) : f\in\cloA,\; r_1,\ldots,r_n\in R\};
$$
here, we apply $h_1$ and functions from $\cloA$ to tuples componentwise. In other words, $R'$ is the closure of $h_1[R]$ under $\cloA$. Let $\relstr{A}'$ be the structure on $A$ whose relations are precisely those of this form. By definition, all relations of $\relstr{A}'$ are invariant under functions in $\cloA$, so $\cloA'\in \Exp \clone{A}$ and hence $\relstr{A}'$ is pp-definable in $\relstr{A}$ by~(i). 
Clearly, $h_1$ is a homomorphism from $\relstr{B}$ to $\relstr{A}'$, since $\cloA$ contains the projections. If on the other hand $f(h_1(r_1),\ldots,h_1(r_n))$ is any tuple in a relation $R'$ of $\relstr{A}'$, then  $h_2(f(h_1(r_1),\ldots,h_1(r_n)))\in R$ because $h_2(f(h_1(x_1),\ldots,h_1(x_n)))\in\cloC$ is contained in $\cloB$ and because $R$ is invariant under the functions in $\cloB$.
\end{proof}

The following corollary incorporates all we have obtained so far, characterizing the notion of pp-constructability via algebraic operators.

\begin{corollary} \label{cor:pp_mut}
Let $\relstr{A}$, $\relstr{B}$ be at most countable $\omega$-categorical structures. Then
 $\relstr{B}$ can be pp-constructed from $\relstr{A}$ iff $\clone{B} \in \ETraPfin \clone{A}$. In this case $\CSP(\relstr{B})$ is log-space reducible to $\CSP(\relstr{A})$.
\end{corollary}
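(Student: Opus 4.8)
The plan is to combine Corollary~\ref{cor:relational} with the algebraic translations of Proposition~\ref{prop:algebraic}; the complexity statement is then immediate from Corollary~\ref{cor:complexity}. By Corollary~\ref{cor:relational}, $\relstr{B}$ can be pp-constructed from $\relstr{A}$ if and only if $\relstr{B}\in\HomEq\PpPower\relstr{A}$, so it suffices to prove that $\relstr{B}\in\HomEq\PpPower\relstr{A}$ holds if and only if $\clone{B}\in\ETraPfin\clone{A}$.

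For the implication from pp-constructibility to the algebraic side, I would fix a pp-power $\relstr{C}$ of $\relstr{A}$ with domain $A^n$ together with homomorphisms $h_1\colon B\to A^n$ and $h_2\colon A^n\to B$ witnessing that $\relstr{B}$ is homomorphically equivalent to $\relstr{C}$. Two observations do the work. First, since the relations of $\relstr{C}$ are, when read as relations on $A$, pp-definable from $\relstr{A}$, they are invariant under $\Pol(\relstr{A})=\clone{A}$; consequently the clone $\clone{A}_n\in\PPPfin\clone{A}$ obtained by letting $\clone{A}$ act componentwise on $A^n$ is contained in $\clone{C}$. Second, because $h_1$ and $h_2$ are homomorphisms between $\relstr{B}$ and $\relstr{C}$, every function belonging to the \transfer{} of $\clone{C}$ determined by $h_1,h_2$ preserves every relation of $\relstr{B}$ and therefore lies in $\clone{B}$. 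Since a \transfer{} of a smaller set of functions along given maps is contained in the corresponding \transfer{} of a larger set, the \transfer{} of $\clone{A}_n$ along $h_1,h_2$ is then a subset of $\clone{B}$, which is exactly the statement $\clone{B}\in\Exp\Tra(\clone{A}_n)\subseteq\ETraPfin\clone{A}$.

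For the converse I would essentially rerun the second half of the proof of Proposition~\ref{prop:algebraic}(iv), with $\clone{A}$ replaced throughout by a finite-power clone $\clone{A}_n\in\PPPfin\clone{A}$. Starting from functions $h_1\colon B\to A^n$ and $h_2\colon A^n\to B$ whose associated \transfer{} of $\clone{A}_n$ is contained in $\clone{B}$, define a structure $\relstr{A}'$ on $A^n$ whose relation corresponding to a relation $R$ of $\relstr{B}$ is the closure of $h_1[R]$ under $\clone{A}_n$. By construction every relation of $\relstr{A}'$ is invariant under $\clone{A}_n$, hence --- reading it as a relation on $A$ of $n$ times the arity --- invariant under $\clone{A}=\Pol(\relstr{A})$, and therefore pp-definable from $\relstr{A}$ by the Galois correspondence for $\omega$-categorical structures already used in Lemma~\ref{lem:adding_const}. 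Thus $\relstr{A}'$ is a pp-power of $\relstr{A}$. Exactly as in Proposition~\ref{prop:algebraic}(iv), $h_1$ is a homomorphism $\relstr{B}\to\relstr{A}'$ because $\clone{A}_n$ contains the projections, and $h_2$ is a homomorphism $\relstr{A}'\to\relstr{B}$ because the \transfer{} lands in $\clone{B}$ while the relations of $\relstr{B}$ are $\clone{B}$-invariant; hence $\relstr{B}\in\HomEq\PpPower\relstr{A}$, and Corollary~\ref{cor:relational} finishes the argument.

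Apart from these mostly bookkeeping steps, the only substantive ingredient is the passage from ``invariant under the finite-power clone $\clone{A}_n$'' to ``pp-definable from $\relstr{A}$'', which rests on the $\omega$-categoricity of $\relstr{A}$ via the polymorphisms--pp-definitions Galois connection; I expect this to be the main, and essentially the only non-routine, point, everything else being a direct assembly of Corollaries~\ref{cor:relational} and~\ref{cor:complexity} and Proposition~\ref{prop:algebraic}.
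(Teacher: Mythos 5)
Your proof is correct and follows essentially the same route as the paper: Corollary~\ref{cor:relational} to pass to homomorphic equivalence with a pp-power, the reflection/Galois-connection argument of Proposition~\ref{prop:algebraic} (which you re-derive for the finite-power clone rather than citing items (ii) and (iv) and absorbing the intermediate $\EEE$), and Corollary~\ref{cor:complexity} for the complexity claim. No gaps.
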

\begin{proof}
By Corollary~\ref{cor:relational}, $\relstr{B}$ can be pp-constructed from $\relstr{A}$ iff $\relstr{B}$ is homomorphically equivalent to a pp-power of $\relstr{A}$. By Proposition~\ref{prop:algebraic}, this is the case iff $\cloB$ is contained in $\EREPfin \clone{A}$. Clearly, the latter class equals $\ETraPfin \clone{A}$. The second statement follows by application of Corollary~\ref{cor:complexity}.
\end{proof}

%\begin{defn}
%When $\mathscr C$ is a function clone, then we write $\R(\mathscr C)$, $\D(\mathscr C)$, $\HHH(\mathscr C)$, $\SSS(\mathscr C)$, $\PPP(\mathscr C)$, and $\Pfin(\mathscr C)$ for sets of functions obtained by giving $\mathscr C$ a signature and then applying the respective operators. Except for the first two operators, these sets of functions will again be function clones.
%\end{defn}

\section{Linear Birkhoff} \label{sec:birk}

We now turn to a syntactic characterization of the operator $\Tra$ together with $\PPP$, similar to the syntactic description of the operators $\HHH$, $\SSS$, and $\PPP$ in item~(iii) of Theorem~\ref{thm:old_finite}. Let us recall the notion of a clone homomorphism and introduce two weakenings thereof.

\begin{definition}\label{defn:clonehomo}
Let $\clone{A}$ and $\clone{B}$ be function clones and let $\xi\colon \clone{A} \to \clone{B}$ be a mapping that preserves arities. We say that $\xi$  is 
\begin{itemize}
\item a \emph{clone homomorphism}, or \emph{preserves identities}, if 
\[
\xi(\pi^n_k)=\pi^n_k\quad \text{ and }\quad 
\xi(f(g_1, \dots, g_n)) = \xi(f)(\xi(g_1), \dots, \xi(g_n))
\]
for any $1\leq k \leq n$, any $m\geq 1$, any $n$-ary operation $f \in \clone{A}$, and all $m$-ary operations $g_1, \dots, g_m \in \clone{A}$;

\item an \emph{h1 clone homomorphism}, or \emph{preserves identities of height $1$}, if 
\[
\xi(f(\pi^m_{i_1}, \dots, \pi^m_{i_n})) = \xi(f)(\pi^m_{i_1}, \dots, \pi^m_{i_n})
\]
for all $n, m\geq 1$, all  $i_1, \dots, i_n \in \{1, \dots, m\}$, and any $n$-ary operation $f\in\cloA$;

\item a \emph{strong h1 clone homomorphism}, or \emph{preserves identities of height at most $1$}, if it  is an h1 clone homomorphism and preserves all  projections.
\end{itemize}
\end{definition}

In the following, we will give some motivation for our terminology.

\begin{definition} \label{def:5.2}
Let $\tau$ be a functional signature, and let $t, s$ be $\tau$-terms. 
An identity $t \approx s$ is said to have \emph{height $1$} (\emph{height at most $1$}, respectively) if both $t$ and $s$ are terms of height $1$ (height at most $1$, respectively). 
\end{definition}
So a height $1$ identity is of the form
\[
f(x_{1},\ldots,x_{n}) \approx g(y_{1},\ldots,y_{m})
\]
where $f,g$ are functional symbols in $\tau$ and $x_1,\dots,x_n, y_1,\dots, y_m$ are not necessarily distinct variables. Identities of height at most $1$ include moreover identities of the form 
\[
f(x_{1},\ldots,x_{n}) \approx y
\]
and 
\[
x \approx y\; .
\]
We note that identities of height at most $1$ are also called \emph{linear} in the literature.

Observe that the variants of a clone homomorphism introduced in Definition~\ref{defn:clonehomo} have the suggested meaning. Indeed, if $\xi\colon \clone{A} \to \clone{B}$ is an arity preserving mapping, $\alg{A}$ is the algebra $(A; (f)_{f\in\cloA})$ of signature $\tau = \clone{A}$, and $\alg{B}$ is the $\tau$-algebra $(B; (\xi(f))_{f\in\cloA})$, then
$\xi$ is a clone homomorphism if and only if $s \approx t$ is true in $\alg{B}$ whenever $s \approx t$ is true in $\alg{A}$, where $s \approx t$ is an identity in the signature $\tau$; similarly, it is an h1 clone homomorphism if and only if this condition holds for identities of height 1, and a strong h1 clone homomorphism if and only if it holds for identities of height at most 1.

\begin{proposition}\label{prop:abstract}
Let $\clone{A}$, $\clone{B}$ be function clones. Then
\begin{itemize}
\item[(i)] $\clone{B} \in \EHSP \clone{A}$ iff there exists a clone homomorphism $\clone{A} \to \clone{B}$;
\item[(ii)] $\clone{B} \in \ERetP \clone{A}$ iff there exists a strong h1 clone homomorphism $\clone{A} \to \clone{B}$;
\item[(iii)] $\clone{B} \in \ETraP \clone{A}$ iff there exists an h1 clone homomorphism $\clone{A} \to \clone{B}$.
\end{itemize}
In all cases, if $A$ and $B$ are finite, then the operator $\PPP$ can be equivalently replaced by $\Pfin$.
\end{proposition}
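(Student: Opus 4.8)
The plan is to prove all three equivalences by a single two-step scheme, the genuinely new content being in (ii) and (iii); for (i) I would either invoke the abstract form of Birkhoff's theorem for function clones from the literature or simply note that it drops out of the construction below. For the implication from the semantic side to the existence of a clone homomorphism, I would first record that every operator in sight carries a canonical map of the right strength: if $\cloC$ is obtained from $\cloA$ by a power, then sending $f$ to its componentwise action on $A^I$ is a clone homomorphism; restriction to an invariant subset and passage to the quotient by an invariant congruence are clone homomorphisms; and the inclusion supplied by $\Exp$ is a clone homomorphism. The crucial observation is that if $\hat\cloC=\{\hat f:f\in\cloC\}$ is a \transfer{} of a function clone $\cloC$ witnessed by maps $h_1,h_2$, then $f\mapsto\hat f$ is an h1 clone homomorphism, since the very definition of a \transfer{} makes both $\widehat{f(\pi^m_{i_1},\dots,\pi^m_{i_n})}$ and $\hat f(\pi^m_{i_1},\dots,\pi^m_{i_n})$ equal to $(x_1,\dots,x_m)\mapsto h_2\bigl(f(h_1(x_{i_1}),\dots,h_1(x_{i_n}))\bigr)$; and if in addition $h_2\circ h_1=\id$, i.e.\ $\hat\cloC$ is a retraction, then $\widehat{\pi^n_k}=\pi^n_k$, so $f\mapsto\hat f$ is a strong h1 clone homomorphism. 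A one-line check shows that the composite of an h1 (resp.\ strong h1) clone homomorphism with clone homomorphisms on either side is again h1 (resp.\ strong h1), so composing the canonical maps along a chain of operators witnessing $\cloB\in\ETraP\cloA$ (resp.\ $\ERetP\cloA$, $\EHSP\cloA$) gives the required map $\cloA\to\cloB$.

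For the converse I would give one construction handling all three cases at once. Given $\xi\colon\cloA\to\cloB$, I set $I:=A^B$, the set of all functions $B\to A$, let $\cloA$ act componentwise on $A^I$, and define $h_1\colon B\to A^I$ by letting $h_1(b)$ be the evaluation tuple $(g(b))_{g\in A^B}$. Then for an $n$-ary $f\in\cloA$ the componentwise action of $f$ sends $(h_1(b_1),\dots,h_1(b_n))$ to $F_{f,b_1,\dots,b_n}:=\bigl(f(g(b_1),\dots,g(b_n))\bigr)_{g\in A^B}$, and I define $h_2$ on these elements by $h_2(F_{f,b_1,\dots,b_n}):=\xi(f)(b_1,\dots,b_n)$, extended arbitrarily to the rest of $A^I$. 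By construction the \transfer{} of the power of $\cloA$ on $A^I$ via $h_1,h_2$ attaches to $f$ exactly the operation $\xi(f)$, so it equals $\xi[\cloA]\subseteq\cloB$ and hence $\cloB\in\ETraP\cloA$. If $\xi$ is a strong h1 clone homomorphism then $h_1(b)=F_{\pi^1_1,b}$, hence $h_2(h_1(b))=\xi(\pi^1_1)(b)=b$, the \transfer{} is a retraction, and $\cloB\in\ERetP\cloA$. If $\xi$ is a full clone homomorphism then, because $\xi$ preserves composition, the subalgebra of the power generated by $h_1[B]$ consists exactly of the elements $F_{p,\bar b}$, and the restriction of $h_2$ to it is a surjective homomorphism onto $(B;\xi[\cloA])$ (surjective because $h_2(F_{\pi^1_1,b})=b$); this realizes $\xi[\cloA]$ inside $\HSP\cloA$, whence $\cloB\in\EHSP\cloA$.

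The only step that is not routine bookkeeping, and the one I expect to be the main obstacle, is the well-definedness of $h_2$: one must show that $F_{f,\bar b}=F_{f',\bar b'}$ forces $\xi(f)(\bar b)=\xi(f')(\bar b')$, and this is precisely where the height $1$ hypothesis on $\xi$ enters. Writing $b^{(1)},\dots,b^{(m)}$ for the distinct entries occurring among $\bar b$ and $\bar b'$, and using that the restriction of $g$ to $\{b^{(1)},\dots,b^{(m)}\}$ ranges over all of $A^m$ as $g$ ranges over $A^B$, one sees that $F_{f,\bar b}=F_{f',\bar b'}$ is equivalent to a height $1$ identity $f(\pi^m_{i_1},\dots,\pi^m_{i_n})\approx f'(\pi^m_{j_1},\dots,\pi^m_{j_{n'}})$ holding in $\cloA$; applying $\xi$ and then evaluating the resulting height $1$ identity in $\cloB$ at $(b^{(1)},\dots,b^{(m)})$ yields exactly $\xi(f)(\bar b)=\xi(f')(\bar b')$. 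Finally, the clause allowing $\PPP$ to be replaced by $\Pfin$ for finite $A,B$ is immediate from the construction, since then $I=A^B$ is finite and the power used above is finite, while the forward direction only becomes easier when the power is constrained to be finite; thus in the finite case $\ETraPfin\cloA$ and $\ETraP\cloA$, and likewise their retraction and $\HSP$ analogues, are governed by the same syntactic condition.
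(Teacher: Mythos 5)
Your proof is correct and follows essentially the same route as the paper: the same power $A^{A^B}$ with generators $\pi^B_b$ (your $h_1(b)$), the same partially defined map $h_2$ whose well-definedness is exactly the paper's height-$1$ observation, and the same specializations for the retraction and full clone homomorphism cases. The only cosmetic difference is that you take the \transfer{} of the full power (extending $h_2$ arbitrarily) rather than of the invariant subset generated by the $\pi^B_b$, which lets you bypass the appeal to $\Tra\SSS\PPP\cloA = \Tra\PPP\cloA$.
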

\begin{proof}
The implications from left to right follow from the fact that the operators $\HHH$, $\SSS$, and $\PPP$ preserve all identities, that the operator $\Ret$ preserves identities of height at most $1$, and that $\Tra$ preserves identities of height $1$.

We show the converses, starting with~(i) although this follows from Birkhoff's theorem.  
Let $\xi\colon \cloA\To\cloB$ be a clone homomorphism. For
every $b\in B$, let $\pi^B_b\in A^{A^B}$ be the function which projects any
tuple in $A^B$ onto the $b$-th coordinate. Let $\cloA$ act on the tuples
$\{\pi^B_b : b\in B\}$ componentwise; closing the latter subset of $A^{A^B}$
under this action, we obtain an invariant subset $S$ of $A^{A^B}$. The action of
$\cloA$ thereon is a function clone in $\SP\cloA$. In fact, if we see this
action as an algebra with signature $\cloA$, it is the free algebra in the
variety generated by the algebra $(A; (f)_{f\in \cloA})$ with generators
$\{\pi^B_b : b\in B\}$. The mapping from $\{\pi^B_b : b\in B\}$ to $B$
which sends every $\pi^B_b$ to $b$ therefore extends to a homomorphism $h\colon
S\To B$ from the free algebra $(S;(f)_{f\in \cloA})$ onto the algebra
$(B;(\xi(f))_{f\in \cloA})$, since the latter algebra is an element of the
mentioned variety as $\xi$ preserves identities. Therefore, the function clone
$\{\xi(f) : {f\in \cloA}\}$ is an element of $\HSP(\cloA)$, and whence
$\clone{B} \in \EHSP \clone{A}$.

With the intention of modifying this proof for (ii) and (iii), let us remark the following. 
 If one wishes to avoid reference to free algebras in the previous proof, then
 it is enough to define the set $S$ as above and then observe that for all $n,
 m\geq 1$, all $n$-ary $f\in\cloA$ and all $m$-ary $g\in\cloA$, and all
 $b_1,\ldots,b_n,c_1,\ldots,c_m\in B$ we have that if
 $f(\pi^B_{b_1},\ldots,\pi^B_{b_n})=g(\pi^B_{c_1},\ldots,\pi^B_{c_m})$, then
 $\xi(f)(b_1,\ldots,b_n)=\xi(g)(c_1,\ldots,c_m)$. This follows from the fact
 that $\xi$ preserves, in particular, identities of height $1$, and allows us to
 define a~mapping $h\colon S\To B$ uniquely by sending every tuple in $S$ of the
 form $f(\pi^B_{b_1},\ldots,\pi^B_{b_n})$ to $\xi(f)(b_1,\ldots,b_n)$. In the case of a clone homomorphism $\xi\colon\cloA\To\cloB$, this yields a homomorphism from the action of $\cloA$ on $S$ onto the action of $\xi[\cloA]$ on $B$; in other words, the action of $\xi[\cloA]$ on $B$ is isomorphic to the action of $\cloA$ on the kernel classes of $h$.

We prove (ii). By the argument above, we obtain a surjective mapping $h\colon
S\To B$ which sends every $\pi^B_b$ to $b$ since $\xi$ preserves projections. Let $h'\colon B\To S$ be the mapping which sends every $b\in B$ to $\pi^B_b$. Then the function clone $\{\xi(f): {f\in \cloA}\}$ is a retraction of the action of $\cloA$ on $S$ via the functions $h'$ and $h$, and so it is an element of $\Ret\SP\cloA$, which equals $\Ret\PPP\cloA$ by Lemma~\ref{lem:algebraicinclusions}. Whence, $\clone{B} \in \ERetP \clone{A}$.

The proof of~(iii) is identical, with the difference that $h$ does not
necessarily send every $\pi^B_b$ to $b$; this is because $\xi$ does not
necessarily preserve projections. Defining $h'$ the same way as above, we then
get that $\{\xi(f) : {f\in \cloA}\}$ is a \transfer{} of the action of $\cloA$
on $S$ via the functions $h'$ and $h$, rather than a retraction.

The additional statement about finite domains follows from the proof above: the power we take is $A^B$.
\end{proof}

We remark that in the proof above, the mapping $h'$ was always injective. Hence, one could alter the definition of a \transfer{} by requiring that the mapping $h_1\colon B\To A$ is injective, and obtain the very same syntactic characterization. In other words, if we introduced an operator $\Tra'$ for such \transfers{}, from which we shall refrain, then we would have $\RP\cloA=\Tra'\PPP\cloA$ for all function clones $\cloA$.

Let us conclude this section with an analogue of Birkhoff's HSP theorem for closure under \transfers{} and products. 

\begin{corollary} \label{cor:linear-birkhoff}
Let $\class{K}$ be a nonempty class of algebras of the same signature $\tau$.
\begin{itemize}
\item[(i)] $\class{K}$ is closed under $\Tra$ and $\PPP$ if and only if $\class{K}$ is the class of models of some set of $\tau$-identities of height $1$.
\item[(ii)] $\class{K}$ is closed under $\Ret$ and $\PPP$ if and only if $\class{K}$ is the class of models of some set of $\tau$-identities of height at most $1$.
\end{itemize}
\end{corollary}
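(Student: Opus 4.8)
The plan is to derive Corollary~\ref{cor:linear-birkhoff} from Proposition~\ref{prop:abstract} together with the classical Birkhoff theorem, in the same way that the ``variety = closure under $\HHH,\SSS,\PPP$'' statement is equivalent to the ``variety = class of models of identities'' statement. I will treat item~(i); item~(ii) is entirely analogous with ``height $1$'' replaced by ``height at most $1$'' and $\Tra$ replaced by $\Ret$, invoking part~(ii) of Proposition~\ref{prop:abstract} in place of part~(iii).

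First, the easy direction. Suppose $\class{K} = \Mod(\Sigma)$ for a set $\Sigma$ of $\tau$-identities of height $1$. Products preserve all identities, so $\PPP\class{K}\subseteq\class{K}$ is immediate. For closure under $\Tra$: if $\alg{B}$ is a \transfer{} of $\alg{A}\in\class{K}$ via $h_1\colon B\to A$, $h_2\colon A\to B$, then for any height $1$ identity $f(x_{i_1},\dots,x_{i_n})\approx g(x_{j_1},\dots,x_{j_m})$ in $\Sigma$, evaluating the $\alg{B}$-operations at arbitrary $b_1,\dots,b_k\in B$ and using that $h_1$ is applied uniformly and that the corresponding identity holds in $\alg{A}$, both sides equal $h_2$ of a common value; hence $\alg{B}\models\Sigma$, i.e. $\alg{B}\in\class{K}$. (This is exactly the observation already invoked in the proof of Proposition~\ref{prop:abstract} that $\Tra$ preserves height $1$ identities.)

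For the converse, assume $\class{K}$ is closed under $\Tra$ and $\PPP$. By Lemma~\ref{lem:algebraicinclusions}(i),(iii), $\class{K}$ is then also closed under $\HHH$, $\SSS$, and $\PPP$, so by Birkhoff it is a variety; let $\var V = \class{K}$ and fix a generator $\alg{G}$, so $\varclo{\var V}$ is its clone of term operations. Let $\Sigma$ be the set of all height $1$ $\tau$-identities valid in $\class{K}$; trivially $\class{K}\subseteq\Mod(\Sigma)$, and I must show the reverse inclusion. Let $\alg{B}\in\Mod(\Sigma)$ be arbitrary, with term clone $\cloB$. Sending each $\tau$-symbol to the corresponding term operation of $\alg{B}$ induces, after the standard identification, an arity-preserving map $\xi$ from $\varclo{\var V}$ (equivalently, from the term clone $\cloA$ of $\alg{G}$) to $\cloB$; since $\alg{B}$ satisfies every height $1$ identity that holds in $\var V = \HSP\alg{G}$, the map $\xi$ preserves height $1$ identities, i.e. it is an h1 clone homomorphism $\cloA\to\cloB$. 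By Proposition~\ref{prop:abstract}(iii), $\cloB\in\ETraP\cloA$. Unwinding this at the level of algebras: $\cloB$ is (contained in) a \transfer{} of a power of $\cloA$, and the algebra $\alg{B}$ — being generated by its term operations — is obtained from a power of $\alg{G}$ by the corresponding \transfer{}, hence lies in $\Tra\PPP\class{K}\subseteq\class{K}$. Therefore $\Mod(\Sigma)\subseteq\class{K}$, and the two classes coincide.

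The main obstacle I anticipate is the bookkeeping in the last step: Proposition~\ref{prop:abstract} is phrased purely in terms of function clones and the operators $\Exp,\Tra,\PPP$, whereas Corollary~\ref{cor:linear-birkhoff} is about classes of algebras of a fixed signature $\tau$. One must carefully pass between an abstract $\tau$-algebra and the function clone of its term operations, check that the height $1$ identities of $\tau$ valid in $\alg{B}$ really do correspond to the h1-clone-homomorphism condition (here the free-algebra description inside the proof of Proposition~\ref{prop:abstract} is the right tool), and verify that the $\EEE$ in $\ETraP$ is harmless because we only ever need the $\tau$-reduct of $\cloB$. None of this is deep, but it is the part where a careless argument could go wrong, so I would spell it out rather than hand-wave the passage ``$\xi$ is an h1 clone homomorphism $\Rightarrow$ $\alg{B}\in\Tra\PPP\class{K}$''.
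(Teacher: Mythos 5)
Your proof is correct and follows essentially the same route as the paper's: the easy direction via preservation of height-$1$ identities under $\Tra$ and of all identities under $\PPP$, and the hard direction by producing an algebra in $\class{K}$ whose height-$1$ theory is exactly $\Sigma$ and then invoking Proposition~\ref{prop:abstract}(iii) to place $\alg{B}$ in $\RP$ of that algebra, hence in $\class{K}$. The only cosmetic difference is that you obtain this algebra as a generator of the variety $\class{K}$ (after a detour through classical Birkhoff), whereas the paper directly takes the product of one witness from $\class{K}$ for each height-$1$ identity outside $\Sigma$; the clone-versus-algebra bookkeeping you rightly flag is present, at the same level of informality, in the paper's own proof.
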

\begin{proof} 
The implications from right to left are trivial since $\PPP$ preserves all identities and since $\Tra$ and $\Ret$ preserve identities of height $1$ and identities of height at most $1$, respectively.

Suppose that $\class{K}$ is closed under $\Tra$ and $\PPP$, and let $\alg{B}$ be any $\tau$-algebra satisfying the set $\Sigma$ of those $\tau$-identities of height $1$ which hold in all algebras of $\class{K}$. Pick for every $\tau$-identity of height $1$ which is not contained in $\Sigma$ an algebra in $\class{K}$ which does not satisfy this identity, and let $\alg{A}\in\class{K}$ be the product of all such algebras. Then the mapping which sends every $\tau$-term of $\alg{A}$ to the corresponding term of $\alg{B}$ preserves identities of height $1$, and so $\alg{B}\in \RP\alg{A}$ by Proposition~\ref{prop:abstract}. Hence, $\alg{B}\in\class{K}$.

The proof of (ii) is identical.
\end{proof}

\section{Topological Linear Birkhoff} \label{sec:cont}

\subsection{Finite goal structures} In Proposition~\ref{prop:abstract} we obtained a characterization when $\clone{B} \in \ETraPfin \clone{A}$ for function clones on finite domains. For function clones on arbitrary sets, even for polymorphism clones of countable $\omega$-categorical structures, we are generally forced to take infinite powers, obstructing the combination with Corollary~\ref{cor:pp_mut}. By considering the topological structure of function clones in addition to their algebraic structure, a characterization of when $\clone{B} \in \HSPfin \clone{A}$ has been obtained for polymorphism clones of $\omega$-categorical structures~\cite{Topo-Birk} -- cf.~Theorem~\ref{thm:old_infinite}. We will now obtain a similar characterization for our operators in the case where $\clone{B}$ has a finite domain. This is, in particular, interesting in the light of Conjecture~\ref{conj:old} which states that for a certain class of $\omega$-categorical structures, the only reason for hardness of the CSP is reduction of the CSP of a structure on a 2-element domain whose only polymorphisms are projections.

In the following proposition, item~(i) is a variant of a statement in~\cite{Topo-Birk} which uses stronger general assumptions, namely, that $\cloA$ is dense in the polymorphism clone of a countable $\omega$-categorical structure; on the other hand, it uses a formally weaker statement in one of the sides of the equivalence, namely, continuity rather than uniform continuity. Continuity and uniform continuity, however, turn out to be equivalent for that setting; cf.~also Section~\ref{sect:cont-unifcont}. Our variant presented here, first observed in~\cite{GPin15}, follows directly from the right interpretation of the proof in~\cite{Topo-Birk}.

We say that the domain of a function clone $\cloB$ is \emph{finitely generated by $\cloB$} iff the algebra obtained by viewing the elements of $\cloB$ as the operations of the algebra is finitely generated; that is, there is a finite subset $B'$ of $B$ such that every element of $B$ is of the form $f(b_1,\ldots,b_n)$, where $b_1,\ldots,b_n\in B'$ and $f\in \cloB$. We remark that domains of polymorphism clones of countable $\omega$-categorical structures are finitely generated by those clones.

\begin{proposition}\label{prop:topological}
Let $\clone{A}, \clone{B}$ be function clones.
\begin{itemize}
\item[(i)] If the domain of $\cloB$ is finitely generated by $\cloB$, then $\clone{B}\in \EHSPfin(\clone{A})$ iff there exists a~uniformly continuous clone homomorphism $\xi\colon \clone{A}\To\clone{B}$.
\item[(ii)] If the domain of $\cloB$ is finite, then $\clone{B}\in \ETraPfin(\clone{A})$ iff there exists a~uniformly continuous h1 clone homomorphism $\xi\colon \clone{A}\To\clone{B}$.
\end{itemize}
\end{proposition}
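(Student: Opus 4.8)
The plan is to reduce Proposition~\ref{prop:topological}(ii) to the abstract (power-based) characterization in Proposition~\ref{prop:abstract}(iii), using finiteness of $B$ to replace the potentially infinite power by a finite one, exactly as the uniform continuity hypothesis permits.

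First I would treat the easy direction. If $\cloB \in \ETraPfin(\cloA)$, then $\cloB \in \EEE\Tra\Pfin\cloA$, so there is a finite power $\cloA^I$, a set of functions $\cloC \subseteq \cloB$ obtained as a \transfer{} of $\cloA^I$ via maps $h_1\colon B \to A^I$ and $h_2\colon A^I \to B$, and every operation of $\cloB$ contains (extends) the corresponding operation of $\cloC$. Composing the projection $\cloA \to \cloA^I$ with the \transfer{} map gives an h1 clone homomorphism $\xi\colon \cloA \to \cloB$ (it preserves height-$1$ identities because projections, finite powers, and \transfers{} all do, by Lemma~\ref{lem:algebraicinclusions} and the observations preceding Proposition~\ref{prop:abstract}). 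Uniform continuity is automatic here: $\xi(f)$ depends on $f$ only through its restriction to the finite set $h_1[B] \subseteq A^I$, hence through finitely many coordinates of $A$ and finitely many tuples; so the finite set $A' \subseteq A$ consisting of all entries appearing in tuples in $h_1[B]$ witnesses uniform continuity for every finite $B' \subseteq B$.

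For the converse, suppose $\xi\colon \cloA \to \cloB$ is a uniformly continuous h1 clone homomorphism. I would revisit the proof of Proposition~\ref{prop:abstract}(iii): there one builds the invariant subset $S \subseteq A^{A^B}$ generated under the action of $\cloA$ by the tuples $\{\pi^B_b : b \in B\}$, and a \transfer{} of the action of $\cloA$ on $S$ onto $\cloB$ via $h'\colon b \mapsto \pi^B_b$ and $h\colon f(\pi^B_{b_1},\dots,\pi^B_{b_n}) \mapsto \xi(f)(b_1,\dots,b_n)$. Since $B$ is finite, apply uniform continuity with $B' = B$ to obtain a finite set $A' \subseteq A^B$ such that any two equal-arity functions in $\cloA$ agreeing on $A'$ have $\xi$-images agreeing on all of $B$. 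Now replace the index set $A^B$ by the finite set $A'$: let $S' \subseteq A^{A'}$ be the closure of $\{\pi^{A'}_b\}_{b\in B}$ — where $\pi^{A'}_b$ now means the restriction to $A'$ of $\pi^B_b$, i.e.\ the tuple $(a(b))_{a\in A'}$ — under the componentwise action of $\cloA$. This is a function clone in $\Pfin\cloA$. The point of the uniform continuity choice is precisely that the map $f(\pi^{A'}_{b_1},\dots,\pi^{A'}_{b_n}) \mapsto \xi(f)(b_1,\dots,b_n)$ is still well-defined on $S'$: if two such expressions give the same tuple in $A^{A'}$, the corresponding functions agree on $A'$, so their $\xi$-images agree on $B$, using that $\xi$ preserves height-$1$ identities. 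Then $\xi[\cloA]$ is a \transfer{} of the action of $\cloA$ on $S'$ via the (finite-power) witnessing maps, so $\cloB \supseteq \xi[\cloA]$ gives $\cloB \in \EEE\Tra\Pfin\cloA = \ETraPfin\cloA$.

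The main obstacle is the well-definedness argument on the truncated set $S'$: one must be careful that the relation $f(\pi^{A'}_{b_1},\dots) = g(\pi^{A'}_{c_1},\dots)$ in $A^{A'}$ genuinely says that the functions $f(\pi^n_{i_1},\dots)$ and $g(\pi^m_{j_1},\dots)$ — built from $f,g$ by identifying/permuting variables according to the indices — agree on every tuple of $A'$, which is exactly the hypothesis needed to invoke uniform continuity, and then to conclude $\xi(f)(b_1,\dots,b_n) = \xi(g)(c_1,\dots,c_m)$ from preservation of height-$1$ identities by $\xi$. Spelling out this bookkeeping (that points of $A'$ correspond to tuples in $A^B$ and that agreement on $A'$ matches the uniform-continuity conclusion) is routine but is where all the content of the finiteness of $B$ and of uniform continuity is used; everything else is a transcription of the proof of Proposition~\ref{prop:abstract}(iii) with $A^B$ replaced by the finite $A'$.
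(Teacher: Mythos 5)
Your argument for item~(ii) is correct and is essentially the paper's proof: your finite set $A'\subseteq A^B$ is (up to transposition) the paper's finite list of tuples $a^1,\dots,a^m\in A^n$ obtained from uniform continuity with $B'=B$, your $S'\subseteq A^{A'}$ is the paper's $S\subseteq A^m$, and the well-definedness of $f(\pi^{A'}_{b_1},\dots)\mapsto\xi(f)(b_1,\dots,b_n)$ is exactly the step the paper reduces to the second proof of Proposition~\ref{prop:abstract}. Note, however, that the statement also contains item~(i), which your proposal does not address; there the domain of $\cloB$ is only finitely generated (not finite), and one additionally needs that $\xi$ preserves \emph{all} identities in order to extend the map $h$ from the generators to all of $B$ via the generating process --- this is precisely why (i) and (ii) carry different hypotheses.
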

\begin{proof}
As in Proposition~\ref{prop:abstract}, the implications from left to right are trivial. 

For the other direction, we follow the proof of that proposition, but use uniform continuity to replace arbitrary powers by finite ones. To do this for item~(i),  let $b_1,\ldots,b_n$ be generators of $B$. By uniform continuity, there exist $m\geq 1$ and $a^1,\ldots,a^m\in A^n$ such that for all $n$-ary functions $f,g\in\cloA$ we have that if $f,g$ agree on all tuples $a^1,\ldots,a^m$, then $\xi(f)(b_1,\ldots,b_n)=\xi(g)(b_1,\ldots,b_n)$. For all $1\leq i\leq n$, let $a_i\in A^m$ consist of the $i$-th components of the tuples $a^j$. Then, $f(a_1,\ldots,a_n)=g(a_1,\ldots,a_n)$, calculated componentwise, implies that $\xi(f)(b_1,\ldots,b_n)=\xi(g)(b_1,\ldots,b_n)$. Let $S\subseteq A^m$ be the set obtained by applying $n$-ary functions in $\cloA$ to $a_1,\ldots,a_n$ componentwise. The remainder of the proof is identical with the second proof of item~(i) of Proposition~\ref{prop:abstract}.

For~(ii), we proceed the same way and then as in the corresponding item of Proposition~\ref{prop:abstract}, assuming that $\{b_1,\ldots,b_n\}$ actually equals $B$. The reason why we need the stronger assumption of finiteness is that we cannot use the generating process since $\xi$ does not necessarily preserve identities (in particular, we would be unable to define the mapping $h'$ as in the proof of Proposition~\ref{prop:abstract}). 
\end{proof}

We remark that the condition in item~(i) of Proposition~\ref{prop:topological} that the domain of $\cloB$ be finitely generated is reasonable (and, in general, necessary): if $\cloA$ is a transformation monoid, then we can let it act on arbitrarily many disjoint copies of its domain simultaneously; this new action $\cloB$ will  not be in $\HSPfin(\cloA)$ for reasons of cardinality of the domain if we take enough copies, but $\cloA$ and $\cloB$ will be isomorphic as monoids via a homeomorphism. We can do the same with a function clone which is a transformation monoid in disguise in the sense that all of its functions depend on at most one variable.

Similarly, the finiteness condition in item~(ii) seems to be necessary in general: note, for example, that any mapping between transformation monoids preserves identities of height 1 since that notion of preservation only becomes non-trivial for functions of several variables. We cannot, however, expect endomorphism monoids of completely unrelated structures to be related via the operators $\Tra$ and $\Pfin$.

\subsection{Continuity versus uniform continuity}\label{sect:cont-unifcont} We will now observe conditions ensuring that continuity of a mapping between function clones implies uniform continuity, in particular in order to shed light on the question why uniform continuity appears in Proposition~\ref{prop:topological}, whereas it does not in earlier results such as Theorem~\ref{thm:old_infinite}.

\begin{defn}
Let $\cloA$ be a function clone. An \emph{invertible} of $\cloA$ is a unary
bijection in $\cloA$ whose inverse is also an element of $\cloA$.
\end{defn}

\begin{defn}\label{defn:outerinvertibles}
A mapping $\xi\colon \cloA\To\cloB$ between function clones $\cloA, \cloB$ \emph{preserves composition with invertibles from the outside} iff $\xi(\alpha f)=\xi(\alpha)\xi(f)$ for all invertibles $\alpha\in\cloA$ and all $f\in\cloA$.
\end{defn}

We briefly mentioned the following fact in the discussion preceding Proposition~\ref{prop:topological}; it follows from the material in~\cite{Topo-Birk}, but we give a compact proof here.

\begin{prop}\label{prop:uniform}
Let $\mathscr A,\mathscr B$ be function clones, where $\mathscr A$ is the polymorphism clone of a countable $\omega$-categorical structure. Then any continuous mapping $\xi\colon \mathscr A\To \mathscr B$ preserving composition with invertibles from the outside is uniformly continuous.
\end{prop}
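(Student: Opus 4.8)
The plan is to distil from the two hypotheses exactly the amount of rigidity needed to run a compactness argument powered by the oligomorphy of $\Aut(\relstr A)$, where $\relstr A$ denotes a countable $\omega$-categorical structure with $\mathscr A=\Pol(\relstr A)$. Write $G$ for the invertibles of $\mathscr A$; these are precisely the automorphisms of $\relstr A$, so $G$ acts on each finite power $A^N$ with only finitely many orbits. First I would record the consequences of $\xi$ preserving composition with invertibles from the outside. Put $e:=\xi(\id_A)$. Applying $\xi(\alpha\circ f)=\xi(\alpha)\circ\xi(f)$ with $\alpha=\id_A$ gives $e\circ\xi(f)=\xi(f)$ for every $f\in\mathscr A$, and since $e$ is idempotent this means that the range of every $\xi(f)$ lies in $B_0:=\ran(e)$; applying it with $f=\alpha\inv$ gives $\xi(\alpha\inv)\circ\xi(\alpha)=e$, whence $\xi(\alpha)\rest B_0$ is injective for every $\alpha\in G$. (One also obtains that $\xi\rest G$ is a monoid homomorphism into the unary part of $\mathscr B$, from which the standard translation argument showing that continuous homomorphisms of topological groups are uniformly continuous can be extracted; but only the injectivity of the maps $\xi(\alpha)\rest B_0$ will be used.)

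Next I would reduce the claim to the following: for every $n\geq1$ and every $\bar b\in B^n$ there is a finite $A'\subseteq A$ such that any two $n$-ary $f,g\in\mathscr A$ agreeing on $(A')^n$ satisfy $\xi(f)(\bar b)=\xi(g)(\bar b)$ (recall that $\xi$ preserves arities); uniform continuity at a finite $B'\subseteq B$ then follows by taking, for each arity, the union of these sets $A'$ over the finitely many tuples from $B'$. Assume this fails for some $n$ and $\bar b$, fix an increasing chain $A_1\subseteq A_2\subseteq\cdots$ of finite sets with union $A$, and pick for each $k$ a pair of $n$-ary operations $f_k,g_k\in\mathscr A$ agreeing on $(A_k)^n$ with $\xi(f_k)(\bar b)\neq\xi(g_k)(\bar b)$. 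The core step is a back-and-forth normalisation: after passing to a subsequence one can find $\beta_j\in G$ with $\beta_j\circ f_{k_j}\to h$ and $\beta_j\circ g_{k_j}\to h$ pointwise for a single $h\in\mathscr A$. To produce it, diagonalise along the chain: at stage $j$ there are finitely many $G$-orbits of tuples indexed by $(A_j)^n$, so along an infinite set of indices the restrictions $f_k\rest(A_j)^n$ lie in one orbit, and post-composing with suitable automorphisms one normalises them to a fixed $r^{(j)}\colon(A_j)^n\to A$; since an automorphism carrying one normalised restriction to another must fix the values already pinned down at earlier stages, the $r^{(j)}$ can be chosen to cohere, $r^{(j+1)}\rest(A_j)^n=r^{(j)}$. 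Because $f_k$ and $g_k$ agree on $(A_k)^n\supseteq(A_j)^n$ for $k\geq j$, the same automorphisms simultaneously normalise $g_k\rest(A_j)^n$. The map $h:=\bigcup_j r^{(j)}$ agrees with a member of $\mathscr A$ on every finite set and so lies in $\mathscr A$, polymorphism clones being closed, and a diagonal choice $k_j$ (with $k_j\geq j$) yields the desired $\beta_j:=\gamma^{(j)}_{k_j}$.

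Finally, continuity of $\xi$ gives $\xi(\beta_j\circ f_{k_j})\to\xi(h)$ and $\xi(\beta_j\circ g_{k_j})\to\xi(h)$ in $\mathscr B$, so for all large $j$
\[
\xi(\beta_j)\bigl(\xi(f_{k_j})(\bar b)\bigr)=\xi(\beta_j\circ f_{k_j})(\bar b)=\xi(h)(\bar b)=\xi(\beta_j\circ g_{k_j})(\bar b)=\xi(\beta_j)\bigl(\xi(g_{k_j})(\bar b)\bigr).
\]
Since $\xi(f_{k_j})(\bar b)$ and $\xi(g_{k_j})(\bar b)$ both lie in $B_0$ and $\xi(\beta_j)\rest B_0$ is injective, this forces $\xi(f_{k_j})(\bar b)=\xi(g_{k_j})(\bar b)$, contradicting the choice of the pairs. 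I expect the main obstacle to be precisely the back-and-forth normalisation: one must use oligomorphy to move the bad pairs by automorphisms into a convergent configuration while keeping the two members of each pair locked together, and it is exactly the hypothesis on composition with invertibles — through the injective maps $\xi(\beta_j)\rest B_0$ — that renders this normalisation invisible to $\xi$.
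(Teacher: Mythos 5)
Your proof is correct, and it reaches the conclusion by a genuinely different decomposition, even though both arguments ultimately rest on the same two ingredients: oligomorphicity and the outer-composition law. The paper argues directly: for a fixed arity and fixed target tuples it covers the space of $k$-ary operations by finitely many orbit-saturated basic open sets on which $\xi$ is locally constant (a finite-subcover compactness argument), glues the witnessing tuples together, and then, given $f,g$ agreeing there, finds a single invertible $\alpha$ with $\alpha f$ and $\alpha g$ in the same basic open set; it cancels $\alpha$ on the $\xi$-side by applying the composition law a second time, writing $\xi(f)=\xi(\alpha\inv)\xi(\alpha f)$. You instead argue by contradiction with the convergent-subsequence incarnation of the same compactness: you normalise a sequence of counterexample pairs by automorphisms, via a diagonalisation over finitely many orbits at each finite stage, to a common pointwise limit $h\in\mathscr A$ (closedness of polymorphism clones), invoke sequential continuity — legitimate here because $A$ is countable, so the topology is metrizable — and then cancel the automorphisms on the $\xi$-side through your auxiliary observation that each $\xi(\alpha)$ is injective on $\ran(\xi(\id_A))$, a set containing the ranges of all the $\xi(f)$. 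That injectivity lemma is a clean isolation of exactly what the hypothesis on outer invertibles buys; the paper's double application of the composition law performs the same cancellation without needing to locate the values $\xi(f)(\bar b)$. Both your reduction to a single arity and target tuple and your coherence argument for the normal forms $r^{(j)}$ are sound and match the level of detail of the paper's own compactness step.
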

\begin{proof}
Let $k, j\geq 1$ and $w_1,\ldots,w_k\in B^{j}$ be given; we have to show that there exist $m\geq 1$ and $q_1,\ldots,q_k\in A^{m}$ such that $f(q_1,\ldots,q_k)=g(q_1,\ldots,q_k)$ implies $\xi(f)(w_1,\ldots,w_k)=\xi(g)(w_1,\ldots,w_k)$ for all $k$-ary $f,g\in\mathscr A$.

For $k,m\geq1$, $q_1,\ldots,q_k\in A^{m}$, and $q\in A^m$ we write $O_{q_1,\ldots,q_k}^q$ for the basic open set of $\mathscr A$ which consists of those $k$-ary functions $f\in \mathscr A$ for which $f(q_1,\ldots,q_k)=q$. We use the same notation for the basic open sets of $\mathscr B$. We further write $U_{q_1,\ldots,q_k}^q$ for the open set of  those $k$-ary functions $f$ in $\mathscr A$ for which $f(q_1,\ldots,q_k)$ lies in the orbit of $q$ with respect to the action of the invertible elements of $\mathscr A$ on $k$-tuples. 

By continuity, for every $k$-ary $f\in\mathscr A$ there exist $m^f\geq 1$ and
$q_1^f,\ldots,q_k^f, q^f \in A^{m^f}$ such that $g\in
O_{q_1^f,\ldots,q_k^f}^{q^f}$ implies
$\xi(f)(w_1,\ldots,w_k)=\xi(g)(w_1,\ldots,w_k)$, for all $k$-ary $g\in \mathscr
A$. By a standard compactness argument, the space $\mathscr A \cap {A^{A^k}}$ is
covered by finitely many sets of the form $U_{q_1^f,\ldots,q_k^f}^{q^f}$; write
$f_1,\ldots,f_n$ for the functions involved in this covering. Set
$q_1,\ldots,q_k\in A^{m}$ to be the vectors
obtained by gluing everything together, i.e., $q_i$ is obtained by joining the
vectors $q_i^{f_1}$, \dots, $q_i^{f_n}$, and $m$ is the length of the vectors obtained this way.  Suppose
$f(q_1,\ldots,q_k)=g(q_1,\ldots,q_k)$. There exists $h\in\{f_1,\ldots,f_n\}$
such that $f\in U_{q_1^h,\ldots,q_k^h}^{q^h}$. Thus there exists a unary
invertible $\alpha\in\mathscr A$ such that $\alpha(f)\in
O_{q_1^h,\ldots,q_k^h}^{q^h}$. Because $f(q_1,\ldots,q_k)=g(q_1,\ldots,q_k)$, we
also have $\alpha(g)\in O_{q_1^h,\ldots,q_k^h}^{q^h}$. By definition,
$\xi(\alpha(f))(w_1,\ldots,w_k)=\xi(h)(w_1,\ldots,w_k)=\xi(\alpha(g))(w_1,\ldots,w_k)$.
Hence, because $\xi$ preserves composition with invertibles from the outside, 
\begin{align*}
\xi(f)(w_1,\ldots,w_k)&=\xi(\alpha^{-1}\alpha f)(w_1,\ldots,w_k)\\
&=\xi(\alpha^{-1})\xi(\alpha f)(w_1,\ldots,w_k)\\
&=\xi(\alpha^{-1})\xi(\alpha g)(w_1,\ldots,w_k)\\
&=\xi(g)(w_1,\ldots,w_k)\;. \qedhere
\end{align*}
\end{proof}

%\begin{defn}
%Let $\cloA, \cloB$ be function clones. A mapping $\xi\colon \cloA\To\cloB$ \emph{preserves composition with invertibles from outside} iff $\xi(\alpha f)=\xi(\alpha)\xi(f)$ for all invertibles $\alpha\in\cloA$ and all $f\in\cloA$.
%\end{defn}

\begin{defn}\label{defn:outerinvertibles2}
Let $\cloA$ be a function clone, and consider it as an algebra with signature $\cloA$ and domain $A$. An identity over the signature $\cloA$ is \emph{of height 1 modulo outer invertibles} if it is of the form
\[
\alpha f(x_{1},\ldots,x_{n}) \approx \beta g(y_{1},\ldots,y_{m})
\]
where $f,g\in\cloA$, $\alpha,\beta\in\cloA$ are invertibles, and $x_1,\dots,x_n, y_1,\dots, y_m$ are not necessarily distinct variables.
\end{defn}

Clearly, a mapping $\xi\colon\cloA\To\cloB$ preserves identities of height 1 modulo outer invertibles iff it is an h1 clone homomorphism preserving composition with invertibles from the outside.

\begin{cor}\label{cor:outer}
Let $\mathscr A,\mathscr B$ be function clones, where $\mathscr A$ is the polymorphism clone of an $\omega$-categorical structure, and $\cloB$ has a finite domain. If there exists a continuous mapping $\xi\colon\mathscr A\To \mathscr B$ which preserves height 1 identities modulo outer invertibles, then $\mathscr B\in\ETraPfin(\mathscr A)$.
\end{cor}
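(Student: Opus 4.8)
The plan is to obtain this as a direct consequence of Propositions~\ref{prop:uniform} and~\ref{prop:topological}(ii), using the observation recorded immediately before the corollary: a mapping $\xi\colon\cloA\To\cloB$ preserves height~1 identities modulo outer invertibles if and only if it is simultaneously an h1 clone homomorphism and preserves composition with invertibles from the outside. So the first thing I would do is unpack the hypothesis into these two properties.

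Next I would apply Proposition~\ref{prop:uniform} to upgrade the topological assumption. Since $\mathscr A$ is the polymorphism clone of an $\omega$-categorical structure (which in our convention is at most countable), and $\xi$ is continuous and preserves composition with invertibles from the outside, Proposition~\ref{prop:uniform} yields that $\xi$ is in fact \emph{uniformly} continuous. In the degenerate case where $\mathscr A$ has a finite domain there is nothing to do at this step, as such function clones carry the discrete topology and every mapping out of them is trivially uniformly continuous; the content of Proposition~\ref{prop:uniform} is the countably infinite case.

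Having established that $\xi$ is a uniformly continuous h1 clone homomorphism from $\mathscr A$ to $\mathscr B$, and since $\mathscr B$ has a finite domain by hypothesis, I would finish by invoking item~(ii) of Proposition~\ref{prop:topological}, which gives exactly $\mathscr B\in\ETraPfin(\mathscr A)$.

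I do not expect a genuine obstacle here: the corollary is essentially a matter of correctly lining up the hypotheses of the two cited propositions. The only point that requires a moment of care is checking that ``preserves height~1 identities modulo outer invertibles'' really does package precisely the two ingredients needed downstream — the h1 clone homomorphism property feeding into Proposition~\ref{prop:topological}(ii), and the preservation of composition with outer invertibles feeding into Proposition~\ref{prop:uniform} — and that the finite-domain degenerate case for $\mathscr A$ is handled (trivially) separately. All the analytic and combinatorial work has already been carried out in the proofs of those propositions.
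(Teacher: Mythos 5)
Your proposal is correct and follows exactly the paper's own route: the paper proves this corollary by observing (immediately before the statement) that the hypothesis splits into being an h1 clone homomorphism plus preserving composition with invertibles from the outside, then citing Proposition~\ref{prop:uniform} for uniform continuity and Proposition~\ref{prop:topological}(ii) to conclude. Your extra remark on the finite-domain case being trivially uniformly continuous is a harmless refinement of the same argument.
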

\begin{proof}
This follows from Propositions~\ref{prop:uniform} and~\ref{prop:topological}.
\end{proof}

Notice that the operator $\Tra$ preserves height 1 identities, but not necessarily identities which are of height 1 modulo outer invertibles, depriving us in Corollary~\ref{cor:outer} of an equivalence similar to the one in Proposition~\ref{prop:topological}.

\subsection{Infinite goal structures}  In infinite domain constraint satisfaction, structures are generally studied relative to a base structure: one usually starts with a  structure $\relstr{A}$ and studies all structures which are first-order definable in $\relstr{A}$, called \emph{reducts} of $\relstr{A}$~\cite{BodPin-Schaefer-both, tcsps-journal, BodDalMarPin, BodMarMot, BPT-decidability-of-definability}. When $\relstr{A}$ is countable and $\omega$-categorical, then this amounts to the study of all structures whose polymorphism clone contains the automorphism group of $\relstr{A}$. So in a sense, one considers function clones \emph{up to  automorphisms of $\relstr{A}$}. In this section, we see that we can make some of our results work for infinite goal structures when we assume that mappings between function clones are compatible with composition with automorphisms. Definitions~\ref{defn:outerinvertibles} and~\ref{defn:outerinvertibles2} pointed in that direction, but as it turns out, we need to compose functions with invertibles from the inside rather than the outside.

%\begin{defn}\label{defn:innerinvertibles}
%A mapping $\xi\colon \cloA\To\cloB$ between function clones $\cloA, \cloB$ \emph{preserves composition with invertibles from the inside} iff $\xi(f(\beta_1(x_1),\ldots,\beta_n(x_n))=\xi(f)(\beta_1(x_1),\ldots,\beta_n(x_n))$ for all invertibles $\beta_1,\ldots,\beta_n\in\cloA$ and all $f\in\cloA$.
%\end{defn}

%\begin{defn}
%An equation in an abstract clone $\C$ is \emph{linear modulo inner invertibles} iff it is of the form
%$$
%\forall x_1,\ldots,x_k.\; f(\beta_1(x_{i_1}),\ldots,\beta_n(x_{i_n}))=g(x_{j_1},\ldots,x_{j_m})
%$$
%where $n,m,k\geq 1$, $\{i_1,\ldots,i_n,j_1,\ldots,j_m\}\subseteq\{1,\ldots,k\}$, 
% $\beta_1,\ldots,\beta_n \in\C$ are unary invertibles, and $f,g\in\C$ are $n$- and $m$-ary, respectively.
%\end{defn}

\begin{defn}\label{defn:innerinvertibles}
Let $\cloA$ be a function clone, and consider it as an algebra with signature $\cloA$ and domain $A$. An identity over the signature $\cloA$ is \emph{of height 1 modulo inner invertibles} if it is of the form
\[
f(\alpha_1(x_{1}),\ldots,\alpha_n(x_{n})) \approx g(\beta_1(y_{1}),\ldots,\beta_m(y_{m}))
\]
where $f,g\in\cloA$, $\alpha_1,\ldots,\alpha_n,\beta_1,\ldots,b_m\in\cloA$ are invertibles, and $x_1,\dots,x_n, y_1,\dots, y_m$ are not necessarily distinct variables.
\end{defn}

\begin{defn}
A~mapping $\xi\colon \cloA \to \cloB$ between function clones \emph{preserves
height 1 identities modulo inner invertibles} iff
$\xi(f(\alpha_1,\dots,\alpha_n)) = \xi(f)(\xi(\alpha_1),\dots,\xi(\alpha_n))$ for
all invertibles $\alpha_1,\dots,\alpha_n \in \cloA$ and all $n$-ary $f\in \cloA$.
\end{defn}

%Similarly as before, a mapping $\xi\colon\cloA\To\cloB$ preserves identities of
%height 1 modulo inner invertibles iff it is an h1 clone homomorphism preserving
%composition with invertibles from the inside.

\begin{prop}\label{prop:inner}
Let $\mathscr A, \mathscr B$ be function clones, and let $\xi\colon \mathscr A\To\mathscr B$
\begin{itemize}
\item be uniformly continuous;
\item preserve height 1 identities modulo inner invertibles;
\item be so that the invertibles of the image $\xi[\mathscr A]$ act with finitely many orbits on $B$.
\end{itemize}
Then $\mathscr B\in \ETraPfin(\mathscr A)$.
\end{prop}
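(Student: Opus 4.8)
The plan is to mimic the proof of Proposition~\ref{prop:topological}(ii): uniform continuity should let us replace arbitrary powers of $A$ by a single finite power, the hypothesis on orbits plays the role that finiteness of $B$ played there, and compatibility of $\xi$ with inner invertibles is what extends the ``coordinate map'' from a finite set of orbit representatives to all of $B$. Explicitly, I would produce an $m\geq 1$, an invariant subset $S\subseteq A^m$, and maps $h_1\colon B\To S$, $h_2\colon S\To B$ such that the \transfer{} of the function clone $\mathscr A|_S\in\SSS\PPPfin(\mathscr A)$ via $h_1,h_2$ is exactly $\xi[\mathscr A]$. Since $\xi[\mathscr A]\subseteq\mathscr B$, and since Lemma~\ref{lem:algebraicinclusions} gives $\Tra\SSS\PPPfin(\mathscr A)\subseteq\Tra\PPPfin(\mathscr A)$, this yields $\mathscr B\in\Exp\Tra\PPPfin(\mathscr A)=\ETraPfin(\mathscr A)$.

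First I would pick representatives $b_1,\dots,b_n\in B$ of the orbits of the invertibles of $\xi[\mathscr A]$ acting on $B$; by hypothesis $n<\omega$. Applying uniform continuity to the finite set $\{b_1,\dots,b_n\}$ produces a finite $A'\subseteq A$; enumerating $(A')^n=\{a^{(1)},\dots,a^{(m)}\}$ and letting $q_i\in A^m$ be the tuple of $i$-th coordinates of $a^{(1)},\dots,a^{(m)}$, one gets that for all $n$-ary $f,g\in\mathscr A$, if $f(q_1,\dots,q_n)=g(q_1,\dots,q_n)$ (computed componentwise in $A^m$) then $\xi(f)(b_1,\dots,b_n)=\xi(g)(b_1,\dots,b_n)$. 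Taking $S\subseteq A^m$ to be the closure of $\{q_1,\dots,q_n\}$ under applying $n$-ary operations of $\mathscr A$ to $(q_1,\dots,q_n)$ — an invariant subset — the map $h_2\colon S\To B$ defined by $h_2(f(q_1,\dots,q_n)):=\xi(f)(b_1,\dots,b_n)$ is then well defined.

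To define $h_1$ I would use that $\xi(\id_A)=\id_B$ (so that $\xi$ sends invertibles of $\mathscr A$ to invertibles of $\xi[\mathscr A]$), whence the orbit hypothesis lets me write each $b\in B$ as $\xi(\alpha_b)(b_{\iota(b)})$ for an invertible $\alpha_b$ of $\mathscr A$ and an index $\iota(b)$; I then set $h_1(b):=\alpha_b(q_{\iota(b)})\in S$, the action on $A^m$ being componentwise. The \transfer{} identity then verifies as follows: for $k$-ary $f\in\mathscr A$ and $c_1,\dots,c_k\in B$, put $G:=f(\alpha_{c_1},\dots,\alpha_{c_k})$ and $F:=G(\pi^n_{\iota(c_1)},\dots,\pi^n_{\iota(c_k)})\in\mathscr A$, so that $F$ is $n$-ary and $F(q_1,\dots,q_n)=f(h_1(c_1),\dots,h_1(c_k))$. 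Using that $\xi$ preserves identities of height $1$ we obtain $h_2(f(h_1(c_1),\dots,h_1(c_k)))=\xi(F)(b_1,\dots,b_n)=\xi(G)(b_{\iota(c_1)},\dots,b_{\iota(c_k)})$, and then compatibility with inner invertibles gives $\xi(G)=\xi(f)(\xi(\alpha_{c_1}),\dots,\xi(\alpha_{c_k}))$, so altogether this equals $\xi(f)(\xi(\alpha_{c_1})(b_{\iota(c_1)}),\dots,\xi(\alpha_{c_k})(b_{\iota(c_k)}))=\xi(f)(c_1,\dots,c_k)$. Hence the \transfer{} of $\mathscr A|_S$ via $h_1,h_2$ has exactly the operations $\xi(f)$, $f\in\mathscr A$, i.e.\ it is $\xi[\mathscr A]$, and the proof concludes as in the first paragraph.

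I expect the main obstacle to be precisely the extension of $h_1$ beyond the representatives: one must know that every point of $B$ is reached from some $b_i$ by an invertible of $\mathscr A$ transported through $\xi$, and one must check that the \transfer{} identity survives this, which needs $\xi$ to be well behaved \emph{simultaneously} with respect to the coordinate projections (height $1$) and with respect to those invertibles (inner composition). Establishing $\xi(\id_A)=\id_B$ and matching the invertibles of $\xi[\mathscr A]$ with images of invertibles of $\mathscr A$ are the delicate points with no counterpart in the finite-codomain Proposition~\ref{prop:topological}(ii); this is also why it is the ``inner'' rather than the ``outer'' invertibles that are the right notion here, in contrast with Corollary~\ref{cor:outer}.
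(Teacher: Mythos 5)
Your proof is correct and follows essentially the same route as the paper's: orbit representatives, uniform continuity to produce the tuples $q_1,\dots,q_n\in A^m$, the map $f(q_1,\dots,q_n)\mapsto\xi(f)(b_1,\dots,b_n)$ in one direction and $b\mapsto\alpha_b(q_{\iota(b)})$ in the other, followed by the same chain of equalities verifying the \transfer{} identity. The only differences are cosmetic: the paper works with the full power $A^m$ (extending the coordinate map arbitrarily outside the generated part) rather than restricting to the invariant subset $S$, and it leaves implicit the composition-with-projections step that you spell out via $F=G(\pi^n_{\iota(c_1)},\dots,\pi^n_{\iota(c_k)})$.
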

\begin{proof}
Let $d_1,\ldots,d_k\in B$ be representatives of the orbits of the action of the invertibles of $\xi[\mathscr A]$ on $B$. By uniform continuity, there exist $m\geq 1$ and $c_1,\ldots,c_k\in A^m$ such that for all $k$-ary $f,g\in\mathscr A$ we have that $f(c_1,\ldots,c_k)=g(c_1,\ldots,c_k)$ implies $\xi(f)(d_1,\ldots,d_k)=\xi(g)(d_1,\ldots,d_k)$. Define a mapping $h_1\colon A^m\To B$ by setting $h_1(f(c_1,\ldots,c_k)):=\xi(f)(d_1,\ldots,d_k)$ for all elements of the form $f(c_1,\ldots,c_k)$ for some $f\in \mathscr A$, and by extending it arbitrarily to $A^m$. By the choice of $c_1,\ldots,c_k\in A^m$, this mapping is well-defined. Next define $h_2\colon B\To A^m$ as follows: for each $d\in B$, pick an invertible $\alpha\in \mathscr A$ and $1\leq i\leq k$ such that $d=\xi(\alpha)(d_i)$, and set $h_2(d):=\alpha(c_i)$. We claim that for all $k$-ary $f\in\mathscr A$ we have $\xi(f)=h_1(f(h_2(x_1),\ldots,h_2(x_k)))$.  It then follows that $\xi[\mathscr A]$ is a~\transfer{} of the action of $\mathscr A$ on $A^m$, proving the statement.

To see the claim, let $f$ be given, and let $s_1,\ldots,s_k\in B$. Pick invertibles $\alpha_1,\ldots,\alpha_k\in\mathscr A$ and $i_1,\ldots,i_k\in\{1,\ldots,k\}$ such that $\xi(\alpha_j)(d_{i_j})=s_j$ for all $1\leq j\leq k$. Then
\begin{align*}
h_1(f(h_2(s_1),\ldots,h_2(s_k)))&=h_1(f(h_2(\xi(\alpha_1)(d_{i_1})),\ldots,h_2(\xi(\alpha_k)(d_{i_k}))))\\
&=h_1(f(\alpha_1(c_{i_1}),\ldots,\alpha_k(c_{i_k})))\\
&=h_1(f(\alpha_1,\ldots,\alpha_k)(c_{i_1},\ldots,c_{i_k}))\\
&=\xi(f(\alpha_1,\ldots,\alpha_k))(d_{i_1},\ldots,d_{i_k}))\\
&=\xi(f)(\xi(\alpha_1)(d_{i_1}),\ldots,\xi(\alpha_k)(d_{i_k})))\\
&=\xi(f)(s_1,\ldots,s_k).
\end{align*}
Here we use the definitions of $h_1$ and $h_2$ and that $\xi$ preserves height 1
identities modulo inner invertibles.
\end{proof}

Observe that strong h1 clone homomorphisms between function clones which
preserve identities of height 1 modulo inner invertibles send invertibles to
invertibles. In particular, the image of the group of invertibles of a function
clone under such a mapping is a group. 

\begin{cor}
Let $\relstr{A}, \relstr{B}$ at most countable $\omega$-categorical relational structures. Suppose that $\xi\colon \mathscr A\To\mathscr B$ 
\begin{itemize}
\item is uniformly continuous;
\item preserves identities of height 1 modulo inner  invertibles;
\item is so that the invertibles of $\xi[\mathscr A]$ act with finitely many orbits on $B$.
\end{itemize}
Then $\relstr{B}$ is pp-constructible from $\relstr{A}$.
\end{cor}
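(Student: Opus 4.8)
The plan is to obtain this corollary as an immediate combination of Proposition~\ref{prop:inner} with Corollary~\ref{cor:pp_mut}; essentially no new argument is needed. As usual, write $\mathscr A=\Pol(\relstr A)$ and $\mathscr B=\Pol(\relstr B)$, which are function clones as required by Proposition~\ref{prop:inner}. The three bullet-point hypotheses imposed on $\xi$ in the statement — uniform continuity, preservation of height $1$ identities modulo inner invertibles, and orbit-finiteness of the invertibles of $\xi[\mathscr A]$ acting on $B$ — are verbatim the hypotheses of Proposition~\ref{prop:inner}. Hence that proposition applies directly and yields $\mathscr B\in\ETraPfin(\mathscr A)$.

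From there I would invoke Corollary~\ref{cor:pp_mut}: since $\relstr A$ and $\relstr B$ are at most countable and $\omega$-categorical, the membership $\cloB\in\ETraPfin\cloA$ is equivalent to $\relstr B$ being pp-constructible from $\relstr A$. Chaining the two implications gives the claim. One may additionally note, as in Corollary~\ref{cor:pp_mut}, that it then also follows that $\CSP(\relstr B)$ is log-space reducible to $\CSP(\relstr A)$.

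I do not expect any genuine obstacle here: the whole content sits in Proposition~\ref{prop:inner}, whose proof already performs the only delicate step, namely replacing an a priori infinite power of $A$ by a finite power $A^m$ — uniform continuity is used to select finitely many tuples $c_1,\dots,c_k\in A^m$, and the orbit-finiteness of the invertibles of $\xi[\mathscr A]$ is used to build the maps $h_1,h_2$ exhibiting $\xi[\mathscr A]$ as a \transfer{} of the action of $\mathscr A$ on $A^m$. The present corollary merely translates that algebraic conclusion back to relational structures through Corollary~\ref{cor:pp_mut}, and the $\omega$-categoricity of $\relstr A,\relstr B$ is used only to make this translation available. As with Propositions~\ref{prop:topological} and~\ref{prop:inner}, I would point out that the orbit-finiteness hypothesis cannot simply be dropped: otherwise $\xi$ could be, for instance, a topological isomorphism onto the polymorphism clone of an $\omega$-categorical structure living on infinitely many orbits, which need not be pp-constructible from $\relstr A$.
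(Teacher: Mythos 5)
Your proof is correct and is exactly the intended argument: the paper states this corollary without proof immediately after Proposition~\ref{prop:inner}, precisely because it follows by combining that proposition (whose hypotheses are verbatim those of the corollary) with Corollary~\ref{cor:pp_mut}.
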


%Observe that mappings between clones which preserve almost linear equations send unary invertibles to unary invertibles. In particular, the image of the group of invertibles of a clone under such a mapping is a group. WHAT THE HELL? This is super wrong. Delete?

%TODO: one can also consider weak oligomorphicity and linear equations modulo outer unaries or other variants. This might be useful, see e.g. the paper Projective clone homomorphisms, where we obtain equations modulo outer elementary embeddings. At least for elementary embeddings the same proof should work with some local inverse. Not sure about other unaries.

\section{Coloring of clones by relational structures}\label{sec:snek}

%All function clones are naturaly quasi-ordered by an existence of a~clone
%homomorphism between them, i.e., $\cloA \le \cloB$ if there is a~clone
%homomorphism from $\cloA$ to $\cloB$. After factoring out homomorphically
%equivalent clones, one gets a~partially ordered class, that is in fact lattice
%ordered. This class-size lattice is usually called the lattice of
%interpretability types of varieties since the same `lattice' can be also
%constructed by starting with ordering of varieties by an~existence of
%interpretability. It has been thoroughly studied in
%a~monograph~\cite{garcia.taylor84}. If we start with a~Mal'cev condition and
%consider all varieties satisfying this conditions (or all function clones
%containing functions satisfying the desired identities), we get a~filter in this
%lattice.  In this way every Mal'cev condition can be interpreted as a~filter,
%with the strong Mal'cev conditions corresponding to certain principal filters.
%One notable still opened conjecture is that the filter corresponding to Day
%terms that describe congruence modular varieties (see \cite{day69}) is
%oin-prime.  Partial results on this conjecture has been published in
%\cite{sequeira01} and \cite{bentz.sequeira14}.

In order to define the central concept, we first introduce some notation, a piece of which has already appeared in the proof of  Proposition~\ref{prop:abstract}.

Let $\cloA$ be a clone and $B$ a set. 
For an element  $b\in B$, let $\pi^B_b\in A^{A^B}$ be the function which projects every
tuple in $A^B$ onto the $b$-th coordinate,  
 let $F_{\cloA}(B) \subseteq A^{A^B}$  be the closure of  $\{\pi^B_b : b\in B\}$ under the componentwise action of~$\cloA$,
and let $\clone{F}_{\cloA}(B)$ be the corresponding clone acting on $F_{\cloA}(B)$.  (We mentioned in the proof of Proposition~\ref{prop:abstract} that $F_{\cloA}(B)$, denoted $S$ there, is the universe of the free algebra generated by the algebra $(A; (f)_{f\in \cloA})$ with generators
$\{\pi^B_b : b\in B\}$.) 
Note that for a finite $B = \{0,1, \dots, n-1\}$, $F_{\cloA}(B)$ is equal to the set of $n$-ary operations in $\cloA$.

For a~relation $R \subseteq B^k$ we define a~relation $R^\cloA
\subseteq F_{\cloA}(B)^k$ as the closure of the set
\[
\{ (\pi_{b_1}^B, \dots, \pi_{b_k}^B) : (b_1,\dots,b_k) \in R\}
\]
under the componentwise action of~$\clone{F}_{\cloA}(B)$.  
In this way, each relational structure $\relB$ with universe $B$ lifts to a relational structure with universe $F_{\cloA}(B)$ of the same signature. Colorings are defined as homomorphisms from the lifted structure to $\relstr{B}$:

\begin{definition}
Let $\cloA$ be a function clone and let $\relB$ be a relational structure.
We say that a~mapping $c\colon F_{\cloA}(B) \to B$ is a~\emph{coloring of $\cloA$ by
$\relB$} if for all relations $R$ of $\relB$ and all tuples \( (f_1,\dots,f_k)
\in R^\cloA \) we have \( (c(f_1),\dots,c(f_k)) \in R \). A~\emph{strong
coloring} is a~coloring that in addition satisfies $c(\pi_b^B) = b$ for all $b
\in B$.  A~clone is (strongly) \emph{$\relB$-colorable} if there exists a~(strong)
coloring $c$ of the clone by $\relB$.
\end{definition}

Sequeira's notion of compatibility with projections~\cite{sequeira01} is equivalent to strong colorings by relational structures consisting of equivalence relations. 

The proof of the following proposition illustrates how a specific Maltsev condition for $n$-permutability is translated to strong non-colorability. 

\begin{proposition} \label{prop:hm}
A variety $\var{V}$ is congruence $n$-permutable for some $n$ if and only if $\varclo{\var{V}}$ is not strongly $(\{0,1\}; \leq)$-colorable.
\end{proposition}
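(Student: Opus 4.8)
The plan is to unwind both sides of the equivalence into concrete combinatorial statements about the free algebra $\clone{F}_{\varclo{\var V}}(\{0,1\})$ and then match them term-by-term. First I would recall the classical Hagemann--Mycielski characterization: $\var V$ is congruence $n$-permutable for some $n$ if and only if $\varclo{\var V}$ admits ternary terms $p_0,\dots,p_n$ (for some $n$) satisfying the Hagemann--Mycielski identities
\[
p_0(x,y,z)\approx x,\quad p_n(x,y,z)\approx z,\quad p_i(x,x,z)\approx p_{i+1}(x,z,z)\ (0\le i<n).
\]
Here $F_{\varclo{\var V}}(\{0,1\})$ can be identified with the set of unary-in-disguise... more precisely, since $B=\{0,1\}$ has two elements, $F_{\varclo{\var V}}(B)$ is (the universe of) the free algebra on two generators $\pi_0^B,\pi_1^B$; write these as $x$ and $y$. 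The relation $(\leq)^{\varclo{\var V}}$ on this free algebra is the subalgebra of $F_{\varclo{\var V}}(B)^2$ generated by the pairs corresponding to $0\le 0$, $0\le 1$, $1\le 1$, i.e. generated by $(x,x),(x,y),(y,y)$; call this subalgebra (relation) $R$.

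Next I would translate strong $(\{0,1\};\leq)$-colorability. A strong coloring is a map $c\colon F_{\varclo{\var V}}(B)\to\{0,1\}$ with $c(x)=0$, $c(y)=1$, and $c$ a homomorphism from the lifted structure to $(\{0,1\};\leq)$; i.e. $(t_1,t_2)\in R$ implies $c(t_1)\le c(t_2)$. So the clone is \emph{not} strongly colorable iff there is \emph{no} such order-preserving two-coloring. I would then observe that the relation $R$, being generated by $(x,x),(x,y),(y,y)$, is exactly $\{(s(x,x,y),\,s(x,y,y)) : s \text{ a ternary term}\}$ together with reflexive pairs — more carefully, an element of $R$ is a pair $(f(a_1,\dots),f(b_1,\dots))$ where $f$ ranges over the clone and the generator-pairs are plugged in componentwise; collecting generator slots this is precisely a pair $(t(x,x,y),t(x,y,y))$ for a ternary term $t$ (the three generator-pairs are $(x,x),(x,y),(y,y)$, so each coordinate reads off the first, first, second entry respectively). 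Thus $R=\{(t(x,x,y),t(x,y,y)):t\in\varclo{\var V}^{(3)}\}$.

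Now the two sides line up. Non-colorability means: the reflexive-transitive... actually the subalgebra-generated... the set $R$, viewed as a directed graph on $F_{\varclo{\var V}}(B)$ together with the ``forced'' constraints $c(x)=0$, $c(y)=1$, has no consistent $\{0\le 1\}$-valued coloring. Equivalently, by a standard argument (a $2$-colorable-by-$\leq$ obstruction), there is a finite alternating chain from $y$ down to $x$: elements $u_0=y,u_1,\dots,u_N=x$ with consecutive pairs alternately in $R$ and in $R^{-1}$ — i.e. $y=u_0$, $(u_1,u_0)\in R$ or $(u_0,u_1)\in R$, etc., producing a ``zig-zag'' forcing $c(x)\ge\ldots\ge c(y)$, contradicting $c(x)=0<1=c(y)$. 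Spelling such a zig-zag out using $R=\{(t(x,x,y),t(x,y,y))\}$ yields exactly ternary terms $p_0,\dots,p_n$ with $p_0(x,x,y)$ reachable from $x$ and $p_n(x,y,y)$ reachable to $y$ and the matching conditions $p_i(x,y,y)=p_{i+1}(x,x,y)$ in the free algebra, which is to say the Hagemann--Mycielski identities hold in $\varclo{\var V}$. Conversely, given Hagemann--Mycielski terms, the chain $x=p_0(x,x,y),\ p_0(x,y,y)=p_1(x,x,y),\ \dots,\ p_n(x,y,y)=y$ is an explicit $R$-zig-zag from $x$ to $y$, so no order-preserving coloring separating $x$ from $y$ can exist, hence $\varclo{\var V}$ is not strongly $(\{0,1\};\leq)$-colorable.

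I expect the main obstacle to be the bookkeeping in identifying $R$ precisely as the set $\{(t(x,x,y),t(x,y,y)):t\in\varclo{\var V}^{(3)}\}$ and, dually, in passing between an abstract ``no consistent monotone $2$-coloring'' statement and the concrete finite zig-zag of length $n$ — one must be careful that the relevant obstruction is genuinely a finite path (this is where finiteness of $n$, as opposed to $n$-permutability for a \emph{fixed} $n$, comes from) and that the ``arbitrary extension'' freedom in choosing $c$ off the chain does not interfere. Everything else — the translation of ``strong coloring'' into an order-preserving homomorphism fixing the generators, and the invocation of Hagemann--Mycielski — is routine.
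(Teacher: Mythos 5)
Your argument is in substance the paper's own: reduce to the Hagemann--Mitschke term characterization of congruence $n$-permutability, compute the lifted relation on the free algebra on $\{\pi_0^{\{0,1\}},\pi_1^{\{0,1\}}\}=\{x,y\}$ as $R=\{(t(x,x,y),t(x,y,y)): t \text{ ternary}\}$, and match Hagemann--Mitschke chains with the obstruction to a strong coloring. (Minor point: the terms are due to Hagemann and Mitschke, not Mycielski.)

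One step is wrong as literally stated, although the computations you do on either side of it are consistent with the corrected version. The obstruction to a map $c$ with $c(x)=0$, $c(y)=1$ and $(u,v)\in R\Rightarrow c(u)\le c(v)$ is \emph{not} a chain whose consecutive pairs lie ``alternately in $R$ and in $R^{-1}$'': a genuinely alternating zig-zag only forces $c(u_0)\le c(u_1)\ge c(u_2)\le\cdots$ and yields no contradiction. The correct obstruction is a \emph{directed} path $y=u_0,u_1,\dots,u_N=x$ with $(u_i,u_{i+1})\in R$ for every $i$, which forces $1=c(y)\le c(u_1)\le\cdots\le c(x)=0$; conversely, if no such path exists, then $c(h):=1$ iff $h$ is $R$-reachable from $y$ (equivalently, the paper's choice: $c(h):=0$ iff there is a Hagemann--Mitschke chain from $\pi_0^{\{0,1\}}$ to $h$) is a well-defined strong coloring, with no freedom left dangling. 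Unwinding a directed path through $R=\{(t(x,x,y),t(x,y,y))\}$ gives terms $t_0,\dots,t_{N-1}$ with $t_i(x,y,y)=t_{i+1}(x,x,y)$ in the free algebra, which after reversing indices are exactly the Hagemann--Mitschke identities; and the explicit chain you exhibit in the converse direction is in fact directed, not alternating. So the proof goes through once ``alternating zig-zag'' is replaced by ``directed path'' throughout.
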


\begin{proof}
Hagemann-Mitschke operations are ternary operations $p_1$, \dots, $p_{n-1}$
such that
\(
  p_1(x,y,y) \approx x \), \(
    p_{n-1}(x,x,y) \approx y\), and \(
  p_i(x,x,y) \approx p_{i+1}(x,y,y)
\) for every $i=1,\dots,n-2$.
By~\cite{hagemann.mitschke73}, a variety $\var{V}$ is $n$-permutable if and only if $\cloA = \varclo{\var{V}}$ contains Hagemann-Mitschke operations.

The relation $\leq^{\cloA}$ is the closure of $\{(\pi^{\{0,1\}}_0,\pi^{\{0,1\}}_0), (\pi^{\{0,1\}}_0,\pi^{\{0,1\}}_1), (\pi^{\{0,1\}}_1,\pi^{\{0,1\}}_1)\}$ under the componentwise action of $\cloA$, therefore it is equal to
$$
\{(t(\pi^{\{0,1\}}_0,\pi^{\{0,1\}}_0,\pi^{\{0,1\}}_1), t(\pi^{\{0,1\}}_0,\pi^{\{0,1\}}_1,\pi^{\{0,1\}}_1)): t \in \cloA, \mbox{ $t$ is ternary }\}.
$$
In other words, for two binary operations $f,g$ in $\cloA$ we have
$f \leq^\cloA g$ if and only if there exists a~ternary
operation $t\in \cloA$ satisfying $t(x,x,y) \approx f(x,y)$ and $t(x,y,y) \approx
g(x,y)$. It follows that if a~clone contains
Hagemann-Mitschke operations then it is not strongly $(\{0,1\};\leq)$-colorable
since such operations force $c(\pi_1^{\{0,1\}}) \le c(\pi_0^{\{0,1\}})$,
a~contradiction with $c(\pi_i^{\{0,1\}}) = i$.  For the other implication, we
 define a~strong coloring $c$ by 
$c(h) = 0$ iff there exists a~Hagemann-Mitschke
chain connecting $\pi^{\{0,1\}}_0$ and $h$, i.e., there is $n$, and operations $p_1,\dots,p_n \in \cloA$ such
that
\(
  p_1(x,y,y) \approx x \), \(
    p_n(x,x,y) \approx h(x,y) \), and \(
  p_i(x,x,y) \approx p_{i+1}(x,y,y)
\)
for every $i=1,\dots,n-1$. Since $\cloA$ does not contain Hagemann-Mitschke operations, we get that
$c(\pi_1^{\{0,1\}}) = 1$. The rest is an~easy exercise.
\end{proof}

A similar characterization of congruence modularity follows from~\cite{sequeira01}:

\begin{proposition} \label{prop:day}
A variety $\var{V}$ is congruence modular if and only if $\varclo{\var{V}}$ is not strongly $\relstr{D}$-colorable, where
$\relstr{D} = (D; \alpha, \beta, \gamma)$, $D = \{1,2,3,4\}$, and $\alpha$, $\beta$, $\gamma$ are equivalence relations on $D$ defined by partitions $12|34$, $13|24$, $12|3|4$. 
\end{proposition}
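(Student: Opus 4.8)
The plan is to argue in complete analogy with the proof of Proposition~\ref{prop:hm}, replacing the Hagemann--Mitschke characterization of $n$-permutability by Day's Maltsev characterization of congruence modularity, and the two-element poset $(\{0,1\};\leq)$ by the structure $\relstr D$ of three equivalence relations. As already noted after Proposition~\ref{prop:hm}, for $\relstr D$ built from equivalence relations strong $\relstr D$-colorability of $\varclo{\var V}$ is exactly Sequeira's compatibility of $\varclo{\var V}$ with the corresponding system of projections, and $\alpha,\beta,\gamma$ are precisely the equivalence relations he uses to capture modularity; so Proposition~\ref{prop:day} is, at bottom, a restatement of the relevant result of~\cite{sequeira01}, and one legitimate proof is simply to invoke that correspondence. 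I would nonetheless spell out the direct argument, as follows. First I would record the shape of $\alpha^\cloA,\beta^\cloA,\gamma^\cloA$ on $F_\cloA(D)$: since $|D|=4$ this set is just the $4$-ary operations of $\cloA$ with generators the projections $\pi^4_1,\dots,\pi^4_4$, and the same ``merging of coordinates'' computation that identified $\leq^\cloA$ in the proof of Proposition~\ref{prop:hm} shows that $(f,g)\in\alpha^\cloA$ iff $f$ and $g$ arise from a common term operation of $\cloA$ by substituting, in matched coordinates, projections whose indices lie in one $\alpha$-block, and similarly for $\beta^\cloA$ and $\gamma^\cloA$; in particular $\gamma^\cloA\subseteq\alpha^\cloA$.

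For the implication ``$\var V$ congruence modular $\Rightarrow$ $\varclo{\var V}$ not strongly $\relstr D$-colorable'' I would take a family of $4$-ary Day terms $m_0,\dots,m_n$ in $\cloA=\varclo{\var V}$ and evaluate Day's identities inside $F_\cloA(D)$ after substituting suitable projections for their pattern variables. The boundary identities $m_0(x,y,z,u)\approx x$ and $m_n(x,y,z,u)\approx u$ pin down the two ends of the resulting sequence to be projections lying in different $\alpha$-blocks (of colors, say, $1$ and $3$), while the alternating identities relating $m_i$ to $m_{i+1}$, transported through the coordinate substitutions above, make each consecutive pair of the sequence $\alpha^\cloA$-, $\beta^\cloA$- or $\gamma^\cloA$-related in the pattern dictated by the parity of $i$. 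A strong coloring $c$ would then map this sequence to a walk in $\relstr D$ of exactly this alternating shape connecting colors $1$ and $3$; a short case analysis of the partitions $12|34$, $13|24$, $12|3|4$ shows no such walk exists, which is the analogue of the antisymmetry step ``$1\not\leq 0$'' in the proof of Proposition~\ref{prop:hm}. For the converse, if $\var V$ is not congruence modular I would define $c\colon F_\cloA(D)\to D$ by the reachability rule along $\alpha^\cloA/\beta^\cloA/\gamma^\cloA$-links of the shapes produced by Day's identities --- the exact counterpart of the Hagemann--Mitschke coloring of Proposition~\ref{prop:hm} --- using the absence of Day terms to see that $c(\pi_i)=i$ and the explicit descriptions above to verify that $c$ respects all three relations of $\relstr D$.

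The main obstacle is precisely the bookkeeping hidden in the previous paragraph: picking the substitution of $\pi_1,\dots,\pi_4$ into Day's identities so that consecutive terms are joined by links of the correct type, and then checking both that the resulting alternating walk between the two end colors is impossible in $\relstr D$ and that the reachability coloring is well defined and compatible with $\alpha^\cloA$, $\beta^\cloA$ and $\gamma^\cloA$. Since $\relstr D$ is the canonical non-modular triple in the partition lattice on four elements, this goes through, and it is in essence the argument of~\cite{sequeira01}; hence if this direct route turns out to be delicate, the fallback is to deduce Proposition~\ref{prop:day} from~\cite{sequeira01} via the equivalence between strong colorability by structures of equivalence relations and compatibility with projections.
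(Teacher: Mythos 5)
The paper offers no proof of this proposition beyond the remark that it ``follows from~\cite{sequeira01}'' via the stated equivalence between strong colorability by structures of equivalence relations and Sequeira's compatibility with projections --- which is exactly the route you identify and put forward as your fallback, so your proposal matches the paper's approach. Your supplementary direct sketch is plausible in outline (the obstruction being that $\gamma\le\alpha$ and $(3,4)\in\alpha\wedge(\beta\vee\gamma)$ while $(3,4)\notin\gamma\vee(\alpha\wedge\beta)=\gamma$ in the partition lattice on $D$), but note that the pair the Day chain forces into contradiction is $\pi_3,\pi_4$ --- lying in the \emph{same} $\alpha$-block yet failing to be $\gamma$-related --- rather than the ``projections in different $\alpha$-blocks, of colors $1$ and $3$'' you describe.
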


The connection between colorings of clones by relational structures and
h1 clone homomorphisms is presented in the following proposition.

\begin{proposition}\label{prop:coloring-and-h1}
Let $\cloA$ be a~function clone and let $\relB$ be a~relational structure. 
\begin{enumerate}
\item[(i)] $\cloA$ is $\relB$-colorable if and only if there is
an~h1 clone homomorphism $\xi\colon \cloA \to \cloB$, and 
\item[(ii)] $\cloA$ is strongly $\relB$-colorable if and only if
there is a~strong h1 clone homomorphism $\xi\colon \cloA \to \cloB$.
\end{enumerate}
\end{proposition}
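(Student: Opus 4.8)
The statement to prove is Proposition~\ref{prop:coloring-and-h1}: for a function clone $\cloA$ and a relational structure $\relB$ with polymorphism clone $\cloB$, $\cloA$ is $\relB$-colorable iff there is an h1 clone homomorphism $\cloA\to\cloB$, and similarly for the strong versions. The key observation is that both sides of the equivalence are, after unwinding the definitions, statements about a single mapping $F_{\cloA}(B)\to B$, so the proof should consist of showing that a coloring \emph{is} an h1 clone homomorphism in disguise, and conversely. I would first recall from the discussion preceding the proposition (and from the second proof of Proposition~\ref{prop:abstract}) that for finite $B=\{0,\dots,n-1\}$ the set $F_{\cloA}(B)$ is precisely the set of $n$-ary operations of $\cloA$, with $\pi^B_b$ corresponding to the $n$-ary projection $\pi^n_{b+1}$; more generally $F_{\cloA}(B)$ is the free algebra on generators $\{\pi^B_b:b\in B\}$ in the variety generated by $(A;(f)_{f\in\cloA})$. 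This is the dictionary that makes everything go.

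\textbf{From coloring to h1 homomorphism.} Suppose $c\colon F_{\cloA}(B)\to B$ is a coloring of $\cloA$ by $\relB$. Given an $n$-ary $f\in\cloA$, define $\xi(f)\colon B^n\to B$ by
\[
\xi(f)(b_1,\dots,b_n):=c\bigl(f(\pi^B_{b_1},\dots,\pi^B_{b_n})\bigr),
\]
where the argument of $c$ is the element of $F_{\cloA}(B)$ obtained by letting $f$ act componentwise on the indicated projections. The first thing to check is that $\xi(f)$ actually lies in $\cloB=\Pol(\relB)$, i.e.\ preserves every relation $R$ of $\relB$: if $(b_1^{(1)},\dots,b_k^{(1)}),\dots,(b_1^{(n)},\dots,b_k^{(n)})$ are rows of a matrix whose columns lie in $R$, then applying $f$ componentwise to the corresponding projections produces a tuple lying in $R^{\cloA}$ by the very definition of $R^{\cloA}$ as the closure of the projection-tuples under $\clone{F}_{\cloA}(B)$, and then the coloring property gives that applying $c$ coordinatewise lands back in $R$ — which is exactly the statement that $\xi(f)$ preserves $R$. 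Second, I must verify $\xi$ preserves height $1$ identities, i.e.\ $\xi(f(\pi^m_{i_1},\dots,\pi^m_{i_n}))=\xi(f)(\pi^m_{i_1},\dots,\pi^m_{i_n})$; both sides, evaluated at $(b_1,\dots,b_m)$, unwind to $c(f(\pi^B_{b_{i_1}},\dots,\pi^B_{b_{i_n}}))$, so this is immediate once one tracks the indices carefully. For the strong version, the extra condition $c(\pi^B_b)=b$ translates directly into $\xi(\pi^m_i)=\pi^m_i$ since $\xi(\pi^m_i)(b_1,\dots,b_m)=c(\pi^B_{b_i})=b_i$.

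\textbf{From h1 homomorphism to coloring.} Conversely, given an h1 clone homomorphism $\xi\colon\cloA\to\cloB$, define $c\colon F_{\cloA}(B)\to B$ by $c(f(\pi^B_{b_1},\dots,\pi^B_{b_n})):=\xi(f)(b_1,\dots,b_n)$. The crucial point, and the only place any work happens, is \emph{well-definedness}: an element of $F_{\cloA}(B)$ may be written as $f(\pi^B_{b_1},\dots,\pi^B_{b_n})$ in several ways, and we need the value of $c$ to be independent of the representation. Two such representations being equal means exactly that the height $1$ identity $f(x_{b_1},\dots,x_{b_n})\approx g(x_{c_1},\dots,x_{c_m})$ holds in the free algebra $F_{\cloA}(B)$, hence in $(A;(f)_{f\in\cloA})$, hence — since $\xi$ preserves height $1$ identities — the corresponding identity holds in $(B;(\xi(f))_{f\in\cloA})$, giving $\xi(f)(b_1,\dots,b_n)=\xi(g)(c_1,\dots,c_m)$. (This is precisely the observation highlighted in the remark inside the proof of Proposition~\ref{prop:abstract}: if $f(\pi^B_{b_1},\dots,\pi^B_{b_n})=g(\pi^B_{c_1},\dots,\pi^B_{c_m})$ then $\xi(f)(b_1,\dots)=\xi(g)(c_1,\dots)$.) Once $c$ is well-defined, the coloring property is just a re-reading of the fact that each $\xi(f)$ preserves the relations of $\relB$: a tuple in $R^{\cloA}$ has the form $(t(\dots),\dots)$ obtained by applying some $\clone{F}_{\cloA}(B)$-term to projection-tuples whose columns lie in $R$, and chasing $c$ through gives a tuple that the $\xi$-images must send into $R$. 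The strong case again corresponds bijectively to preservation of projections. I expect \textbf{well-definedness of $c$} to be the only real obstacle, and it is dispatched cleanly by the free-algebra description of $F_{\cloA}(B)$ together with the defining property of an h1 clone homomorphism; the rest is bookkeeping with indices and the definitions of $R^{\cloA}$ and of a coloring.
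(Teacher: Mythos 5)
Your proposal is correct and follows essentially the same route as the paper: both directions use the same correspondence $\xi(f)(b_1,\dots,b_n)=c\bigl(f(\pi^B_{b_1},\dots,\pi^B_{b_n})\bigr)$, with well-definedness of $c$ resting on preservation of height~1 identities exactly as in the remark inside the proof of Proposition~\ref{prop:abstract}, and the coloring/polymorphism properties checked by the same index-chasing. The only cosmetic difference is that the paper packages the forward direction as exhibiting a reflection of $\clone{F}_{\cloA}(B)$ inside $\cloB$ and then invoking Proposition~\ref{prop:abstract}, whereas you verify the h1 condition for $\xi$ by direct computation.
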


\begin{proof}
The proof relies on Proposition \ref{prop:abstract} and its proof.

To prove (i), suppose that $c\colon F_{\cloA}(B) \to B$ is a~coloring of
$\cloA$ by $\relB$ and consider the \transfer{} $\clone{C} = \{f': f\in\cloA\}$ of $\clone{F}_{\cloA}(B)$ given by $c$ and the mapping $b \mapsto \pi_b^B$ (thus the clone $\clone{C}$ acts on $B$).
We claim that each $f'$ is a polymorphism of $\relstr{B}$. To verify this, consider a~relation $R$ of $\relB$ and tuples
$(b_{i1},\dots,b_{ik})\in R$, $i=1,\dots,n$. We have
\(
  \big(
    f(\pi_{b_{11}}^B,\dots,\pi_{b_{n1}}^B), \dots,
      f(\pi_{b_{1k}}^B,\dots,\pi_{b_{nk}}^B) 
  \big) \in R^\cloA
\),
and then
\[
  \big(
    f'(b_{11},\dots,b_{n1}), \dots, f'(b_{1k},\dots,b_{nk}) 
  \big)
  =
  \big(
    c(f(\pi_{b_{11}}^B,\dots,\pi_{b_{n1}}^B)), \dots,
      c(f(\pi_{b_{1k}}^B,\dots,\pi_{b_{nk}}^B)) 
  \big) \in R
\]
from the definition of coloring. We have shown that $\clone{C} \subseteq \cloB$, therefore $\cloB \in \ETraP \cloA$ since $\clone{C} \in \Tra \SSS \PPP \cloA = \Tra \PPP \cloA$. From Proposition~\ref{prop:abstract} it now follows that there exists an~h1 clone homomorphism $\cloA \to \cloB$.

For the other implication suppose that we have an~h1 clone homomophism $\xi$ from
$\cloA$ to $\cloB$. Then, from the proof of Proposition~\ref{prop:abstract},
we know that $\cloB$ is an expansion of the \transfer{} of~$\clone{F}_{\cloA}(B)$
given by the mappings $h = c \colon f(\pi_{b_1}^B,\dots,\pi_{b_n}^B)
\mapsto \xi(f)(b_1,\dots,b_n)$ and $h'\colon b \mapsto \pi_b^B$. We will show that $c$ is a~coloring.
Let $R$ be a~relation in $\relB$ and 
$(f_1,\dots,f_k) \in R^\cloA$. By the definition of $R^{\cloA}$, there exists $f\in \cloA$ and tuples
$(b_{11},\dots,b_{1k}), \dots, (b_{n1},\dots,b_{nk}) \in R$ such that $f_i =
f(\pi_{b_{1i}}^B,\dots,\pi_{b_{ni}}^B)$ for all $i=1,\dots,n$.  Therefore,
\begin{multline*}
  \big(
    c(f_1), \dots, c(f_k) 
  \big) =
  \big(
    c(f(\pi_{b_{11}}^B,\dots,\pi_{b_{n1}}^B)), \dots,
      c(f(\pi_{b_{1k}}^B,\dots,\pi_{b_{nk}}^B)) 
  \big) \\
  = \big(
    \xi(f)(b_{11},\dots,b_{n1}), \dots, \xi(f)(b_{1k},\dots,b_{nk})
  \big) \in R
\end{multline*}
since $\xi(f)$ is compatible with $R$. This concludes the proof of (i); the proof
of (ii) is analogous.
\end{proof}

%As a~corollary of this claim and the previous lemma, we get that every clone
%either constains Hagemann-Mitschke functions, or has a~strong h1-homomorphism to
%polymorphism clone of $(\{0,1\},\leq)$. Note that a~similar result is known for
%dempotepotent clones, an idempotent clone $\cloA$ either contains
%Hagemann-Mitschke functions; or it has a~clone homomorphism to the polymorphism
%clone of $(\{0,1\},\leq)$ (see \cite{valeriote.willard14}). 

%We have a~similar situation for several other Mal'cev conditions. That is
%a~clone does not contain the corresponding function(s) if and only if it is not
%(strongly) colorable by some relational structure.  We present such relational
%structure for Day terms that has been described among others in \cite{sequeira01}.

%Immediately from Lemma \ref{lem:coloring-and-h1}, we get that a~function clone
%contains functions satisfying Day identities if and only if there is no strong
%h1 clone homomorphism to the polymorphism clone of $\rel D$.

As a corollary of the last three propositions we get that a variety is congruence $n$-permutable for some $n$ (modular, respectively) if and only if its clone does not have a~strong h1 clone homomorphism to the polymorphism clone of $(\{0,1\}; \leq)$ (the structure $\relstr{D}$ from Proposition~\ref{prop:day}, respectively). 

%By combining Proposition~\ref{prop:coloring-and-h1} and Proposition~\ref{prop:abstract} we get the following consequence concerning the colorability of joins.

%Given that strong h1 clone homomorphisms encode the identities of height at
%most~1 that are satisfied in the clone, we can obtain that the modularity
%conjecture is true for varieties defined by identities of height at most~1. This
%generalizes result of \cite{bentz.sequeira14}.
%The similar result can be obtained for every Mal'cev condition that can be
%described in similar manner by (non-existence) of some coloring by a~relational
%structure.

The application of colorings formulated as Theorem~\ref{thm:perm-and-modular} is based on the following 
consequence of Proposition \ref{prop:abstract} and Corollary \ref{cor:linear-birkhoff}. 

\begin{proposition} Let $\var V$ be a~variety and $\cloB$ be a~function clone.
\begin{enumerate}
\item[(i)] If $\var V$ is defined by identities of height 1 and there is
an~h1 clone homomorphism from $\varclo{\var V}$ to $\cloB$, then there is also
a~clone homomorphism from $\varclo{\var V}$ to $\cloB$. 
\item[(ii)] If $\var V$ is defined by identities of height at most 1 and there is
a~strong h1 clone homomorphism from $\varclo{\var V}$ to $\cloB$, then there is also
a~clone homomorphism from $\varclo{\var V}$ to~$\cloB$.
\end{enumerate}
\end{proposition}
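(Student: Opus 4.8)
The plan is to manufacture, out of the hypothesised (strong) h1 clone homomorphism, an algebra lying in $\var V$ \emph{in the original signature} of $\var V$, and then to read a genuine clone homomorphism off that algebra.

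Write $\tau$ for the signature of $\var V$ and fix a generator $\alg F$ of $\var V$, so that $\varclo{\var V}$ is the clone of term operations of $\alg F$; for each symbol $s\in\tau$ the interpretation $s^{\alg F}$ is then a particular element of $\varclo{\var V}$, and $\alg F$ is (up to renaming $s^{\alg F}$ to $s$) a reduct of the algebra $\alg A=(F;(g)_{g\in\varclo{\var V}})$ of signature $\varclo{\var V}$. Given an h1 clone homomorphism $\xi\colon\varclo{\var V}\to\cloB$, I would invoke the \emph{proof} of Proposition~\ref{prop:abstract}(iii): it shows that the algebra $\alg B^{*}=(B;(\xi(g))_{g\in\varclo{\var V}})$ of signature $\varclo{\var V}$ is a \transfer{} of a subalgebra of a power of $\alg A$; in case~(ii), where $\xi$ is a \emph{strong} h1 clone homomorphism, Proposition~\ref{prop:abstract}(ii) improves ``\transfer{}'' to ``retraction''. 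Now pass to $\tau$-reducts: the $\tau$-reduct of $\alg B^{*}$ is precisely $\alg C:=(B;(\xi(s^{\alg F}))_{s\in\tau})$, and since forming reducts commutes with taking powers, subalgebras, \transfers{} and retractions (all of these only involve the connecting maps and act on operations coordinatewise), $\alg C$ is a \transfer{} (respectively a retraction) of a subalgebra of a power of $\alg F$.

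Next I would appeal to Corollary~\ref{cor:linear-birkhoff}. In case~(i), $\var V$ is axiomatised by identities of height $1$, hence is closed under $\Tra$ and $\PPP$; in case~(ii) it is axiomatised by identities of height at most $1$, hence closed under $\Ret$ and $\PPP$. Being a variety, $\var V$ is in addition closed under $\SSS$. Consequently $\alg C\in\Tra\SSS\PPP\alg F\subseteq\var V$ in case~(i), and $\alg C\in\Ret\SSS\PPP\alg F\subseteq\var V$ in case~(ii); either way $\alg C\in\var V$. Finally, since $\alg C$ satisfies every identity of $\var V$, the assignment $r^{\alg F}\mapsto r^{\alg C}$, for $r$ ranging over $\tau$-terms, is a well-defined map $\varphi\colon\varclo{\var V}\to\cloB$: well-defined because $r^{\alg F}=r'^{\alg F}$ implies $\var V\models r\approx r'$ and hence $r^{\alg C}=r'^{\alg C}$; it lands in $\cloB$ because the basic operations $\xi(s^{\alg F})$ of $\alg C$ lie in $\cloB$ and $\cloB$ is a clone; and it preserves projections and composition by construction. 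So $\varphi$ is the desired clone homomorphism.

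The step I expect to be the crux is the passage to the correct signature. It is tempting to apply Corollary~\ref{cor:linear-birkhoff} directly to $\varclo{\var V}$ regarded as an algebra in its own (huge) signature, but that algebra is in general \emph{not} axiomatisable by identities of height $1$ --- already the composition laws among the symbols of $\varclo{\var V}$ have height $2$ --- so that approach fails; the right move is to transport the \transfer{}/retraction back into the signature $\tau$, where the height-$1$ (respectively height-at-most-$1$) axiomatisation of $\var V$ actually lives. A second, smaller subtlety is specific to~(ii): a class axiomatised by identities of height at most $1$ need not be closed under $\Tra$, only under $\Ret$, which is precisely why one needs $\xi$ to be a \emph{strong} h1 clone homomorphism so that Proposition~\ref{prop:abstract}(ii) delivers a retraction rather than a mere \transfer{}. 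Everything else is routine; one could in fact bypass Proposition~\ref{prop:abstract} altogether and verify $\alg C\in\var V$ by hand, checking each defining identity using only the defining property of an h1 clone homomorphism together with, for the height-$0$ parts in case~(ii), preservation of projections.
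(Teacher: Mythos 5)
Your proposal is correct and follows essentially the same route as the paper's proof: use Proposition~\ref{prop:abstract} to place the target inside $\ETraP$ of (a generator of) $\var V$, invoke the height-$1$ axiomatisation via Corollary~\ref{cor:linear-birkhoff} to conclude membership in $\var V$, and then read off a genuine clone homomorphism. The paper compresses the signature bookkeeping (passing from the clone-indexed signature to $\tau$ and back) that you spell out explicitly, but the underlying argument is the same.
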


\begin{proof}
We prove the first part, the second part is analogous. 
Since there is an~h1 clone homomorphism from $\varclo{\var V}$ to $\cloB$, we have $\cloB \in \ETraP\varclo{\var V}$,
therefore $\cloB \in \ETraP \algA$ for a generator $\alg A$ of $\var V$.
But $\var V$ is defined by identities of height 1, so it is closed under $\Tra$ and $\PPP$, hence $\cloB \in \EEE \var{V}$, which in turn implies $\cloB \in \EHSP \varclo{\var V}$. It follows that there exists a clone homomorphism from $\varclo{\var V}$ to $\cloB$. 
%
%
% For (i), observe that if $\cloA$ is an
% image of $\varclo{\var V}$ in an~h1 clone homomorphism then 
% $\cloB \in \ETraP\varclo{\var V}$, therefore $\cloB$.
%
%
% But since $\varV$ is defined by identities of
% height 1 it is closed under $\Tra$ and $\PPP$, hence $\cloA \in \Exp \varV$.
% Which means from the Birkhoff theorem that $\cloA \in \EHSP \varclo \varV$, and
% consequently there is a~clone homomorphism from $\varclo \varV$ to $\cloA$.
% The proof of (ii) is analogous.
\end{proof}

A combination of this proposition and Proposition~\ref{prop:coloring-and-h1} immediately gives the following.

\begin{corollary} \label{cor:join}
Let $\var V$ and $\var W$ be varieties and let $\relstr{B}$ be a relational structure.
\begin{itemize}
\item[(i)] If $\var V$ and $\var W$ are defined by identities of height $1$, and  $\varclo{\var{V}}$ and $\varclo{\var{W}}$ are $\relstr{B}$-colorable, then so is $\varclo{\var{V} \vee \var{W}}$. 
\item[(ii)] If $\var V$ and $\var W$ are defined by identities of height at most $1$, and  $\varclo{\var{V}}$ and $\varclo{\var{W}}$ are strongly $\relstr{B}$-colorable, then so is $\varclo{\var{V} \vee \var{W}}$. 
\end{itemize}
\end{corollary}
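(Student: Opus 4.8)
The plan is to obtain both parts by chaining Proposition~\ref{prop:coloring-and-h1} with the preceding proposition, the only non-formal ingredient being the elementary observation that two clone homomorphisms into a common function clone can be amalgamated over a disjoint-union signature. Throughout, write $\cloB$ for the polymorphism clone of $\relstr B$. I will spell out part~(i); part~(ii) is obtained word for word by inserting ``strong'' and ``height at most~$1$'' in the appropriate places.

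For part~(i), I would first invoke Proposition~\ref{prop:coloring-and-h1}(i) to turn the hypotheses into h1 clone homomorphisms $\varclo{\var V}\to\cloB$ and $\varclo{\var W}\to\cloB$. Since $\var V$ and $\var W$ are defined by identities of height~$1$, the preceding proposition upgrades these to genuine clone homomorphisms $\xi_{\var V}\colon\varclo{\var V}\to\cloB$ and $\xi_{\var W}\colon\varclo{\var W}\to\cloB$. Now recall from Section~\ref{sect:prelims} that the signature of $\var V\vee\var W$ is the disjoint union of the signatures of $\var V$ and $\var W$, and its defining identities are the union of those of $\var V$ and of $\var W$. A clone homomorphism $\varclo{\var V\vee\var W}\to\cloB$ is the same thing as an arity-preserving assignment of an operation of $\cloB$ to each basic symbol of $\var V\vee\var W$ under which every defining identity of $\var V\vee\var W$ holds in $\cloB$; I would take the assignment prescribed by $\xi_{\var V}$ on the symbols coming from $\var V$ and by $\xi_{\var W}$ on those coming from $\var W$. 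Each defining identity of $\var V\vee\var W$ is a defining identity of $\var V$ in $\var V$-symbols or of $\var W$ in $\var W$-symbols, hence is respected by the corresponding clone homomorphism, so all of them are validated; this yields a clone homomorphism $\varclo{\var V\vee\var W}\to\cloB$. Finally, a clone homomorphism is in particular an h1 clone homomorphism, so the reverse direction of Proposition~\ref{prop:coloring-and-h1}(i) gives that $\varclo{\var V\vee\var W}$ is $\relstr B$-colorable.

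For part~(ii) I would run the identical argument with the ``(ii)'' versions: Proposition~\ref{prop:coloring-and-h1}(ii) converts strong $\relstr B$-colorability into strong h1 clone homomorphisms; part~(ii) of the preceding proposition converts these into clone homomorphisms, using now that $\var V$ and $\var W$ are defined by identities of height at most~$1$; the disjoint-union amalgamation produces a clone homomorphism $\varclo{\var V\vee\var W}\to\cloB$; and since any clone homomorphism preserves all identities and all projections, it is in particular a strong h1 clone homomorphism, whence Proposition~\ref{prop:coloring-and-h1}(ii) yields strong $\relstr B$-colorability of $\varclo{\var V\vee\var W}$. The only step needing care is the amalgamation: one must check that clone homomorphisms out of $\varclo{\var V\vee\var W}$ correspond bijectively to interpretations of the basic symbols of the disjoint-union signature that satisfy the union of the two identity sets, which is immediate from Birkhoff's theorem together with the description of the lattice join recalled in Section~\ref{sect:prelims}. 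I do not expect any genuine obstacle here; everything else is a direct citation of the two quoted results.
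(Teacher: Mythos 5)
Your proposal is correct and follows essentially the same route as the paper, which derives the corollary as an ``immediate'' combination of Proposition~\ref{prop:coloring-and-h1} with the preceding proposition (upgrading h1 clone homomorphisms to clone homomorphisms for varieties defined by height~$1$ identities) and the description of the lattice join from the preliminaries. Your explicit amalgamation of the two clone homomorphisms over the disjoint-union signature is exactly the step the paper leaves implicit, and it is carried out correctly.
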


\section{Back to the Introduction} \label{sec:wrapup}

Our results imply Theorem~\ref{thm:new_infinite} as follows; note that Theorem~\ref{thm:new_finite} is a special case thereof. The two statements in~(i) are equivalent by Corollary~\ref{cor:relational}. They are equivalent to~(ii) by Corollary~\ref{cor:pp_mut}, and to~(iii) by Proposition~\ref{prop:topological}.

Item (i) in Theorem~\ref{thm:equiv_conditions} is trivially implied by item
(ii). The other direction follows from Taylor's theorem~\cite{T77} which implies
that the non-existence of a clone homomorphism from an idempotent clone $\cloB$
to the clone of projections is witnessed by non-trivial height 1 identities
satisfied by operations in $\cloB$. These identities prevent an h1 clone
homomorphism to the projection clone $\mathbf 1$. Items (i) and (ii) are
equivalent to (iii) by~\cite{BK12} and to (iv) by~\cite{KMM14} (which is a
strengthening of~\cite{Sig10}). The following corollary implies that items
(i)--(iv) are equivalent to their primed versions.

\begin{corollary} \label{cor:weak_homo_to_core}
Let $\relstr{A}$ be at most countable $\omega$-categorical structure and let
$\relstr{B}$ be the model-complete core of $\relstr{A}$ expanded by finitely
many singleton unary relations. Then there exist uniformly continuous h1 clone
homomorphisms $\cloA \to \cloB$ and $\cloB \to \cloA$.
\end{corollary}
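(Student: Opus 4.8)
The plan is to show that $\relstr B$ can be pp-constructed from $\relstr A$ and, conversely, that $\relstr A$ can be pp-constructed from $\relstr B$; by Corollary~\ref{cor:pp_mut} this yields $\cloB\in\ETraPfin\cloA$ and $\cloA\in\ETraPfin\cloB$, and it then remains to observe that membership in $\ETraPfin$ always produces a \emph{uniformly continuous} h1 clone homomorphism --- this is the easy, ``left-to-right'' direction of Propositions~\ref{prop:abstract} and~\ref{prop:topological}, and crucially it does not use any finiteness of the domains. Write $\relstr{B}_0$ for the model-complete core of $\relstr A$, which is again at most countable and $\omega$-categorical, and $\relstr{B}_0^{(j)}$ for $\relstr{B}_0$ expanded by the first $j$ of the finitely many singletons, so that $\relstr{B}_0^{(0)}=\relstr{B}_0$ and $\relstr{B}_0^{(n)}=\relstr B$; each $\relstr{B}_0^{(j)}$ is still at most countable and $\omega$-categorical.

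For the first pp-construction I would use the chain $\relstr A,\relstr{B}_0=\relstr{B}_0^{(0)},\relstr{B}_0^{(1)},\dots,\relstr{B}_0^{(n)}=\relstr B$: the first step is homomorphic equivalence, and each subsequent step adds a single singleton to $\relstr{B}_0^{(j)}$. To see that these latter steps are legitimate in the sense of Definition~\ref{defn:ppconstructible}, I need each $\relstr{B}_0^{(j)}$ to be a model-complete core, which follows from a short density argument: an endomorphism of $\relstr{B}_0^{(j)}$ is exactly an endomorphism of $\relstr{B}_0$ fixing the named elements, and any automorphism of $\relstr{B}_0$ agreeing with it on a finite set containing those elements fixes them too, hence is an automorphism of $\relstr{B}_0^{(j)}$. (Alternatively one can feed Lemma~\ref{lem:adding_const} into the absorption identities of Lemma~\ref{lem:pppp}.) Thus $\relstr B$ is pp-constructed from $\relstr A$, and Corollary~\ref{cor:pp_mut} gives $\cloB\in\ETraPfin\cloA$.

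For the reverse pp-construction I would note that $\relstr{B}_0$ is a reduct of $\relstr B$ and is therefore pp-interpretable in $\relstr B$ (trivially, via the identity map on the domain), while $\relstr A$ is homomorphically equivalent to $\relstr{B}_0$; chaining these constructions shows that $\relstr A$ is pp-constructed from $\relstr B$, whence $\cloA\in\ETraPfin\cloB$ by Corollary~\ref{cor:pp_mut}. In fact this direction can also be obtained directly from Proposition~\ref{prop:algebraic}(iv): since $\relstr A$ is homomorphically equivalent to $\relstr{B}_0$, which is pp-definable in $\relstr B$, we get $\cloA\in\ETra\cloB\subseteq\ETraPfin\cloB$ at once.

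It remains to extract the uniformly continuous h1 clone homomorphisms. Suppose $\cloB\in\ETraPfin\cloA$, witnessed by a finite power of $A$, which we may take to be $A^k$, together with maps $h_1\colon B\To A^k$ and $h_2\colon A^k\To B$ such that the function $g_f\colon(x_1,\dots,x_n)\mapsto h_2(f^{(k)}(h_1(x_1),\dots,h_1(x_n)))$ belongs to $\cloB$ for every $f\in\cloA$, where $f^{(k)}$ is the componentwise action of $f$ on $A^k$. Then $\xi\colon f\mapsto g_f$ is an h1 clone homomorphism $\cloA\To\cloB$: a direct computation shows that $\xi(f(\pi^m_{i_1},\dots,\pi^m_{i_n}))$ and $\xi(f)(\pi^m_{i_1},\dots,\pi^m_{i_n})$ both send $(z_1,\dots,z_m)$ to $h_2(f^{(k)}(h_1(z_{i_1}),\dots,h_1(z_{i_n})))$. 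Moreover $\xi$ is uniformly continuous: given a finite $B'\subseteq B$, let $A'\subseteq A$ collect all coordinates of the tuples in $h_1[B']$; then whenever two operations of $\cloA$ of equal arity agree on $A'$, their images under $\xi$ agree on $B'$. Applying this once to $\cloB\in\ETraPfin\cloA$ and once to $\cloA\in\ETraPfin\cloB$ finishes the proof. There is no genuinely hard step here; the only point requiring attention is the observation just made --- that the finiteness hypothesis appearing in Proposition~\ref{prop:topological}(ii) is needed solely for the converse implication and may be dispensed with in the present setting --- while the rest is bookkeeping layered on top of the reductions already established.
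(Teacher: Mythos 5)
Your proof is correct, and at the top level it follows the same route as the paper: first establish that $\relstr{B}$ is pp-constructible from $\relstr{A}$ and vice versa, then convert mutual pp-constructibility into uniformly continuous h1 clone homomorphisms in both directions. The difference lies in the second step. The paper's own proof is a one-liner citing Theorem~\ref{thm:new_infinite}, but that theorem (and likewise Proposition~\ref{prop:topological}(ii)) is stated only for \emph{finite} goal structures, whereas here $\relstr{B}$ is in general an infinite $\omega$-categorical structure. You correctly isolate the only implication actually needed --- from $\cloB\in\ETraPfin\cloA$ to the existence of a uniformly continuous h1 clone homomorphism --- observe that this is the ``trivial'' direction whose proof never uses finiteness of $B$, and then make it fully explicit: the map $\xi\colon f\mapsto g_f=h_2\circ f^{(k)}\circ(h_1,\dots,h_1)$ preserves height~1 identities by your displayed computation, and taking $A'$ to be the set of coordinates of the tuples in $h_1[B']$ gives uniform continuity. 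This is exactly the observation the paper relies on implicitly, so your write-up closes a small gap in the citation rather than taking a genuinely different path. Your handling of the relational side (the density argument that each intermediate expansion $\relstr{B}_0^{(j)}$ is again an $\omega$-categorical model-complete core, and the reverse direction via Proposition~\ref{prop:algebraic}(iv)) is likewise a more detailed rendering of what Definition~\ref{defn:ppconstructible} and Corollary~\ref{cor:pp_mut} are being asked to deliver. No gaps.
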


\begin{proof}
Since $\relstr{B}$ is pp-constructible from $\relstr{A}$ and $\relstr{A}$ is pp-constructible from $\relstr{B}$, the claim follows from Theorem~\ref{thm:new_infinite}. 
\end{proof}

\begin{corollary}
The old Conjecture~\ref{conj:old} implies the new Conjecture~\ref{conj:new}.
\end{corollary}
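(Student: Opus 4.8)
The plan is to match the two alternatives of Conjecture~\ref{conj:old} with the two alternatives of Conjecture~\ref{conj:new}, all the work being concentrated in the hardness case. So assume Conjecture~\ref{conj:old} holds, let $\relstr{A}$ be a reduct of a finitely bounded homogeneous structure, and let $\relstr{B}$ be its model-complete core; recall that $\relstr{A}$, and hence also $\relstr{B}$, is at most countable and $\omega$-categorical. If $\CSP(\relstr{A})$ is solvable in polynomial time we are in the second alternative of Conjecture~\ref{conj:new} and there is nothing more to prove. Otherwise Conjecture~\ref{conj:old} supplies elements $b_1,\dots,b_n\in B$ such that the polymorphism clone $\cloB'$ of $\relstr{B}':=(\relstr{B},b_1,\dots,b_n)$ admits a continuous clone homomorphism to $\mathbf{1}$; the NP-completeness of $\CSP(\relstr{A})$ is then already asserted by Conjecture~\ref{conj:old}, so it remains only to produce a uniformly continuous h1 clone homomorphism $\cloA\to\mathbf{1}$.

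First I would fix a finite relational structure $\relstr{N}$ on a two-element domain with $\Pol(\relstr{N})=\mathbf{1}$; such a structure exists, for instance the template of positive $1$-in-$3$-SAT (none of the operations from Schaefer's tractable list, nor negation, preserves that relation, so its polymorphism clone is minimal). Since $\relstr{B}'$ is obtained from $\relstr{B}$ by adding finitely many singleton unary relations, its automorphism group is the pointwise stabiliser of $\{b_1,\dots,b_n\}$ in $\Aut(\relstr{B})$ and hence still oligomorphic, so $\relstr{B}'$ is again at most countable and $\omega$-categorical. Thus Theorem~\ref{thm:old_infinite} applies to the pair $\relstr{B}',\relstr{N}$ (or Theorem~\ref{thm:old_finite}, in the degenerate case that $\relstr{B}'$ is finite): from the continuous clone homomorphism $\cloB'\to\Pol(\relstr{N})=\mathbf{1}$ we conclude that $\relstr{N}$ is pp-interpretable in $\relstr{B}'$, in particular pp-constructible from $\relstr{B}'$.

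Next I would check that $\relstr{B}'$ is itself pp-constructible from $\relstr{A}$: one passes from $\relstr{A}$ to its model-complete core $\relstr{B}$ by homomorphic equivalence, and then adjoins the singletons $\{b_1\},\dots,\{b_n\}$ one at a time. For this last step one uses that every intermediate structure $(\relstr{B},b_1,\dots,b_k)$ with $k<n$ is again an at most countable $\omega$-categorical model-complete core, so that reduction~(c) of Definition~\ref{defn:ppconstructible} is legitimate at each stage; this is the one point that needs a small argument, namely that if $e$ is an endomorphism of $(\relstr{B},b_1,\dots,b_k)$ and $B'\subseteq B$ is finite, then any automorphism of $\relstr{B}$ agreeing with $e$ on $B'\cup\{b_1,\dots,b_k\}$ fixes each $b_i$ and hence witnesses density of automorphisms in the expansion. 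Since pp-constructibility is transitive by definition, $\relstr{N}$ is pp-constructible from $\relstr{A}$. Applying Theorem~\ref{thm:new_infinite} to the pair $\relstr{A},\relstr{N}$ -- legitimate because $\relstr{A}$ is at most countable $\omega$-categorical and $\relstr{N}$ is finite -- now yields a uniformly continuous h1 clone homomorphism $\cloA\to\Pol(\relstr{N})=\mathbf{1}$, completing the hardness case and thereby the proof.

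I do not anticipate any genuine difficulty here; the only place calling for care is the bookkeeping around reduction~(c), i.e.\ the verification that adjoining singletons to a model-complete core leaves it a model-complete core so that the chain of constructions really composes. Everything else is the observation that the hardness side of both conjectures is preserved under pp-constructibility, with the topological Birkhoff theorems of Section~\ref{sec:cont} (together with the classical Theorem~\ref{thm:old_infinite}) serving as the dictionary between continuous, resp.\ uniformly continuous, clone homomorphisms into $\mathbf{1}$ and pp-interpretability, resp.\ pp-constructibility, of a fixed hard finite template.
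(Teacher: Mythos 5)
Your proof is correct, but it takes a different route from the paper's. The paper argues entirely on the algebraic side: it observes that the continuous clone homomorphism from the clone of the expanded core to $\mathbf 1$ is automatically \emph{uniformly} continuous by Proposition~\ref{prop:uniform} (a clone homomorphism in particular preserves composition with invertibles from the outside), and then composes it with the uniformly continuous h1 clone homomorphism from $\cloA$ into the clone of the expanded core supplied by Corollary~\ref{cor:weak_homo_to_core}. You instead translate everything to the relational side: you fix a concrete finite template $\relstr{N}$ with $\Pol(\relstr{N})=\mathbf 1$, use the classical Theorem~\ref{thm:old_infinite} to convert the continuous clone homomorphism into a pp-interpretation of $\relstr{N}$ in the expanded core, chain this with the pp-construction of the expanded core from $\relstr{A}$ (correctly verifying en route that adjoining singletons one at a time preserves the model-complete core property, so that reduction~({c}) applies at each stage), and then apply Theorem~\ref{thm:new_infinite} to return to a uniformly continuous h1 clone homomorphism $\cloA\to\mathbf 1$. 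What your detour buys is that you never invoke Proposition~\ref{prop:uniform}: the continuity-versus-uniform-continuity issue is absorbed into the cited Theorem~\ref{thm:old_infinite}. The cost is the extra bookkeeping (the choice and verification of $\relstr{N}$, the degenerate case of a finite core, the induction for reduction~({c})), most of which the paper has already packaged into Corollary~\ref{cor:weak_homo_to_core}. Both arguments are sound; yours is essentially an inlined and ``relationalized'' version of the paper's two-line composition.
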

\begin{proof}
If the first item of Conjecture~\ref{conj:old} holds for a structure, then so does the first item of Conjecture~\ref{conj:new}. Indeed, let $\relstr{C}$ be an expansion of the model-complete core $\relstr{B}$ of an $\omega$-categorical structure $\relstr{A}$ such that $\clone{C}$ has a continuous homomorphism $\xi$ to the projection clone $\mathbf 1$. Then $\xi$ is uniformly continuous by Proposition~\ref{prop:uniform}.   
By composing $\xi$ with a uniformly continuous h1 clone homomorphism from $\cloA$ into $\cloC$, provided by Corollary~\ref{cor:weak_homo_to_core}, we obtain a~uniformly continuous h1 clone homomorphism from $\cloA$ to $\mathbf 1$.
\end{proof}

We do not know whether the converse holds, i.e., whether or not the first item of Conjecture~\ref{conj:new} implies the first item of Conjecture~\ref{conj:old}. This problem also provides a possible approach to disproving Conjecture~\ref{conj:old}. 

\begin{prob}
Let $\relA$ be a reduct of a finitely bounded homogeneous structure, and let $\relB$ be its model-complete core. Suppose that $\cloA$ maps to $\mathbf 1$ via a~uniformly continuous h1 clone homomorphism (and hence, $\CSP(\relA)$ is NP-hard).\\
 Do there exist elements $b_1,\ldots,b_n$ in $\relB$ such that the polymorphism clone of the expansion of $\relB$ by those constants maps homomorphically and continuously to $\mathbf 1$?
\end{prob}  

%We remark that the answer to this question is positive in the case of a finite structure $\relA$. 
%Note that the above problem provides 

Let us discuss the results on colorings in Section~\ref{sect:intro_colorings}. Theorem~\ref{thm:coloring-and-h1} is the first part of Proposition~\ref{prop:coloring-and-h1}, and Theorem~\ref{thm:perm-and-modular} is a consequence of Corollary~\ref{cor:join}, Proposition~\ref{prop:hm}, and Proposition~\ref{prop:day}. 

The correspondence between Maltsev conditions and h1 clone homomorphism suggests an approach to 
Conjecture~\ref{conj:taylor} and similar problems: for a given clone $\clone B$ (which characterizes the Maltsev condition in question via h1 clone homomorphisms), find an upward directed class of clones such that, for every $\cloA$,  $\cloA$ has an h1 clone homomorphism to $\clone B$ if and only if $\clone A$ has a clone homomorphism to some member of the class.
Encouraging results in this direction are~\cite{kearnes.tschantz07} and \cite{valeriote.willard14}, 
where such a class was found for idempotent $\clone A$s and the $\clone B$s characterizing congruence permutability~\cite{kearnes.tschantz07} and $n$-permutability~\cite{valeriote.willard14}. Is it possible that such a class exists even for every clone $\clone B$?

\bibliographystyle{plain}

\bibliography{CSPbib,snek,global}

\end{document}